\numberwithin{equation}{section}
\newtheorem{scheme}{Scheme}
\newtheorem{example}[theorem]{Example}
\newcommand{\thmref}[1]{Theorem~\ref{#1}}
\newcommand{\corref}[1]{Corollary~\ref{#1}}
\newcommand{\secref}[1]{Section~\ref{#1}}
\newcommand{\lemref}[1]{Lemma~\ref{#1}}
\newcommand{\lemrefs}[2]{Lemmas~\ref{#1} and ~\ref{#2}}
\newcommand{\propref}[1]{Proposition~\ref{#1}}
\newcommand{\proprefs}[2]{Propositions~\ref{#1} and ~\ref{#2}}
\newcommand{\rmkref}[1]{Remark~\ref{#1}}
\newcommand{\figref}[1]{Figure~\ref{#1}}
\newcommand{\tabref}[1]{Table~\ref{#1}}
\newcommand{\eq}[1]{\begin{eqnarray}\label{#1}}
\newcommand{\qe}{\end{eqnarray}}
\newcommand{\nn}{\nonumber}
\newcommand{\be}{\begin{eqnarray}}
\newcommand{\ee}{\end{eqnarray}}
\newcommand{\bal}{\begin{aligned}}
\newcommand{\eal}{\end{aligned}}
\newcommand{\bes}{\begin{eqnarray*}}
\newcommand{\ees}{\end{eqnarray*}}
\newcommand{\bs}{\begin{subeqnarray}}
\newcommand{\es}{\end{subeqnarray}}
\newcommand{\bss}{\begin{subeqnarray*}}
\newcommand{\ess}{\end{subeqnarray*}}
\newcounter{saveeqn}
\def\Span{\operatorname{Span}}
\def\mcF{\mathcal F}
\def\mcK{\mathcal K}
\def\mcL{\mathcal L}
\def\mcM{\mathcal M}
\def\mcS{\mathcal S}
\def\mcA{\mathcal A}
\def\mcX{\mathcal X}
\def\mcY{\mathcal Y}
\def\mcZ{\mathcal Z}
\def\BbbR{\mathbb R}
\def\p{\partial}
\def\sig{\sigma}
\def\O{\Omega}
\def\NChz{{\mathcal NC}^h_0}
\def\NChz2d{{[\mathcal{NC}}^h_0]^2}
\def\tNChz2d{\widetilde {[\mathcal{NC}}^h_0]^2}
\def\eps{\mathfrak{e}}
\def\Im{\operatorname{Im}}
\def\and{\,\text{and}\,}
\def\q{\quad}
\def\<{\left\langle}
\def\>{\right\rangle}
\def\bS{\mathbf S}
\def\bu{\mathbf u}
\def\b1{\mathbf 1}
\def\mfA{\mathfrak A}
\def\mfB{\mathfrak B}
\def\mfE{\mathfrak E}
\def\Tau{{\mathcal T}}
\def\dim{\operatorname{dim}\,}
\def\ds{\, \operatorname{ds}}
\def\forany{\,\forall}
\def\Ind{\operatorname{Ind}\,}
\newcommand{\secrefs}[2]{Sections~\ref{#1} and~\ref{#2}}
\newcommand{\vertiii}[1]{{\left\vert\kern-0.25ex\left\vert\kern-0.25ex\left\vert #1 \right\vert\kern-0.25ex\right\vert\kern-0.25ex\right\vert}}
\numberwithin{figure}{section}
\numberwithin{table}{section}
\crefname{hypothesis}{Hypothesis}{Hypotheses}
\begin{document}
\markboth{Periodic $P_1$--nonconforming quadrilateral element}{Yim \& Sheen}
\title{$P_1$--Nonconforming Quadrilateral Finite Element Space with
  Periodic Boundary Conditions:\\
Part I. Fundamental results on dimensions, bases, solvers, and error analysis}
\author{Jaeryun Yim\thanks{Interdisciplinary Program in
Computational Science \& Technology, Seoul National University,
Seoul 08826, Korea;
{\it current address:} Encored Technologies,
215 Bongeunsa-ro, Gangnam-gu, Seoul 06109, Korea.
\email{jaeryun.yim@gmail.com}}
\and
Dongwoo Sheen
\thanks{Department of Mathematics, Seoul National University, Seoul 08826, Korea.
\email{sheen@snu.ac.kr}}}

\maketitle

\allowdisplaybreaks

\newif\iflong
\longfalse

\begin{abstract}
The $P_1$--nonconforming quadrilateral finite element space with periodic
boundary condition is investigated. The dimension and basis for the
space are characterized
with the concept of minimally essential discrete boundary conditions.
We show that the situation is totally different based on the parity of
the number of discretization on coordinates. Based on the analysis on the space,
we propose several numerical schemes for elliptic problems with
periodic boundary condition. 
Some of these numerical schemes are related with solving a linear equation
consisting of a non-invertible matrix. By courtesy of the Drazin inverse,
the existence of corresponding numerical solutions is guaranteed.
The theoretical relation between the numerical solutions is derived,
and it is confirmed by numerical results.
Finally, the extension to the three dimensional is provided.
\end{abstract}

\begin{keywords}Finite element method, nonconforming, periodic
  boundary condition.
\end{keywords}
\begin{AMS}65N30
\end{AMS}

\section{Introduction}
Many macroscopic material properties are obtained from the knowledge
of accurate microscopic material properties. However in most realistic cases
the ratio of macro scale to micro scale is so large that it cannot be
directly computed the dynamics described at the microscale
level. Therefore usually upscaling techniques are used to reduce the micro scale
level computation to approximately obtain macroscopic properties.
Recently, several efficient multiscale methods have been developed
towards that direction. These include numerical homogenization \cite{babuska1976homogenization, babuska1994special, efendiev2004numerical, engquist2008asymptotic}, 
MsFEM (multiscale finite element methods) and GMsFEM (generalized MsFEM)
\cite{efendiev2009multiscale, efendiev2013generalized, efendiev2014generalized, hou1997multiscale}, VMS (variational multiscale finite element methods) \cite{hughes1998variational},
MsFVM (multiscale finite volume methods) \cite{jenny2003multiscale}
and HMM (heterogeneous multiscale 
methods) \cite{abdulle2012heterogeneous, e2003heterogeneous}.
In numerical homogenization and upscaling of multiscale problems one often
needs to solve periodic boundary value problems at microscale level efficiently.

The $P_1$--nonconforming quadrilateral finite element
\cite{park-sheen-p1nc} has an advantage in computing stiffness matrice
as the gradient of linear polynomials is constant in each quadrilateral
as well as it has the smallest number of DOFs (degrees of freedom) for given
quadrilateral mesh.
There have been a number of studies about this finite element 
for fluid dynamics, elasticity, electromagnetics \cite{kim-yim-sheen,
  feng-kim-nam-sheen-stabilized-stokes, nam-choi-park-sheen-cheapest,
  lim-sheen-driven-cavity, park-locking,
  shi-pei-low-nonconforming-maxwell,
  carstensen-hu-unifying-posteriori, feng-li-he-liu-p1nc-fvm}.
Unlikely other finite elements,
this space is strongly tied with the boundary condition for given problem
due to the dice rule constraint element by element (See \eqref{eq:dice-rule}).
Most of those works are focused on the finite element space with
Dirichlet and/or Neumann BCs.
Altmann and Carstensen \cite{altmann-carstensen-p1nc-tri-quad} show
the dimension of, and a basis for 
the finite element space with inhomogeneous Dirichlet BCs
which shares similar discrete nature with the Neumann boundary case.

On the other hand, the $P_1$-nonconforming quadrilateral element space
with periodic BC has not been investigated.
Thus, it is our intention to investigate its dimension and
basis with periodic BC.

The discrete formulation of periodic problems yields singular linear systems,
which can be dealt with various kinds of generalized inverses of a matrix.
Among them, we will concentrate on the Drazin inverse,
as it can be expressed as a matrix polynomial,
since the Krylov method, which is based on the same idea on matrix
polynomials, can be applied to singular linear systems.
%
One of the most important properties of the Drazin inverse
is the expressibility it as a polynomial in the given matrix.
The Krylov iterative method for a nonsingular linear system is established on this property.
The Krylov scheme can be applied to a singular linear system as well
under proper consistency conditions \cite{ipsen-meyer, kaasschieter-pcg, zhang-lu-wei-uzawa-singular, campbell-meyer, axelsson-iterative, bochev-lehoucq-pure-neumann}.

The aim of this paper is to investigate the structure of the
$P_1$--nonconforming quadrilateral finite element spaces with periodic
BC thoroughly and to suggest some iterative methods to
solve the resulting linear systems based
on the idea of Drazin inverse. An application for nonconforming
heterogeneous multiscale methods (NcHMM) of $P_1$--nonconforming
quadrilateral finite element will appear in \cite{yim-sheen-sim-nchmm}.

The organization of the paper is as follows. In \secref{sec:prelim},
we give a brief explanation for the $P_1$--nonconforming quadrilateral
finite element and the Drazin inverse.
We investigate the dimension of the finite element spaces with various BCs,
including periodic condition which is our main concern, in
\secref{sec:dim-finite-space}. 
We introduce the concept of {\it minimally essential discrete
    BCs} to analyze the precise effects of given BC on the
dimension of the corresponding finite element space.
In \secref{sec:basis-finite-space-periodic}, 
a basis for the periodic nonconforming finite element space is constructed.
It consists of node-based functions by identifying
boundary node-based functions in a suitable way 
and a complementary basis consisting of a few alternating functions is considered.
We propose several numerical schemes for solving
a second-order elliptic problem with periodic BC in
\secref{sec:numerical-scheme}. 
We use an efficient iterative method based on the Krylov space 
in help of the Drazin inverse of the corresponding singular matrix.
The relationship between solutions of the schemes will be discussed.
Finally, we extend all our results to the 3D case in \secref{sec:3d}.

\section{Preliminaries and notations}\label{sec:prelim}
In this section some basics on the Drazin inverse and
the $P_1$--nonconforming quadrilateral finite element
will be briefly reviewed. Also notations to be used are described.

\subsection{The Drazin inverse}\label{sec:drazin-inverse}
Let $A$ be a linear transformation on $\mathbb{C}^n$.
The {\it index of $A$,} denoted by $\Ind(A),$ is defined as 
the smallest nonnegative integer $k$ such that
$$\Im A^0 \supset \Im A \supset \cdots \supset \Im A^{k-1} \supset \Im A^k = \Im A^{k+1} = \cdots,$$
or equivalently
$$\ker A^0 \subset \ker A \subset \cdots \subset \ker A^{k-1} \subset \ker A^k = \ker A^{k+1} = \cdots.$$
It yields that, restricted on $\Im A^k$,
the transformation $A$ becomes an invertible linear transformation.
The Drazin inverse of $A$, denoted by $A^D$, is defined as follows:
for $u = v + w \in \mathbb{C}^n$ where $v\in \Im A^k$ and $w\in \ker A^k$,
$A^D u := \left.A\right|_{\Im A^k}^{-1} v$.
One of the most important properties of the Drazin inverse matrix of $A$ is that
it is expressible as a polynomial in $A$:
\begin{theorem}[\cite{campbell-meyer}]
If $A \in \mathbb{C}^{n\times n}$,
then there exists a polynomial $p(x)$ such that $A^D = p(A)$.
\end{theorem}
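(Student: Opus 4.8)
The plan is to reduce the statement to a fact about the minimal polynomial of $A$ on its ``invertible part.'' First I would use the index $k = \Ind(A)$ supplied by the preceding discussion to split $\mathbb{C}^n = \Im A^k \oplus \ker A^k$; on $V := \Im A^k$ the restriction $A_1 := A|_V$ is invertible, and on $W := \ker A^k$ the restriction $A_0 := A|_W$ is nilpotent (with $A_0^k = 0$). By definition $A^D$ acts as $A_1^{-1}$ on $V$ and as $0$ on $W$. So it suffices to produce a single polynomial $p$ with $p(A_1) = A_1^{-1}$ and $p(A_0) = 0$, because then $p(A)$ agrees with $A^D$ on each summand and hence everywhere.

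The second step is to build $p$ explicitly. Since $A_1$ is invertible, the Cayley--Hamilton theorem (or the minimal polynomial) gives a polynomial identity $A_1^m + c_{m-1} A_1^{m-1} + \cdots + c_1 A_1 + c_0 I = 0$ with $c_0 \neq 0$; rearranging, $A_1^{-1} = -\tfrac{1}{c_0}\bigl(A_1^{m-1} + c_{m-1} A_1^{m-2} + \cdots + c_1 I\bigr) =: q(A_1)$ for an explicit polynomial $q$ with $q(0) = -c_1/c_0$, which need not vanish. To also kill the nilpotent block I would multiply by a high power of $x$: set $p(x) := x^{k}\, \tilde q(x)$ where $\tilde q$ is chosen so that $p(A_1) = A_1^{-1}$ still holds. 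Concretely, since $A_1$ is invertible we may write $A_1^{-1} = A_1^{k}\cdot A_1^{-(k+1)}$ and apply the Cayley--Hamilton trick to express $A_1^{-(k+1)}$ as a polynomial $r(A_1)$; then $p(x) := x^{k} r(x)$ satisfies $p(A_1) = A_1^{-1}$ and $p(A_0) = A_0^{k} r(A_0) = 0$ since $A_0^{k} = 0$. This $p$ is the desired polynomial.

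The last step is bookkeeping: verify that for $u = v + w$ with $v \in V$, $w \in W$ one has $p(A)u = p(A)v + p(A)w = p(A_1)v + p(A_0)w = A_1^{-1} v + 0 = A^D u$, using that $V$ and $W$ are both $A$-invariant and hence $p(A)$-invariant, so $p(A)$ respects the decomposition. I do not anticipate a genuine obstacle here; the only point requiring a little care is ensuring the single polynomial does double duty on both blocks simultaneously, which the factor $x^{k}$ handles cleanly. One could alternatively invoke the Chinese Remainder Theorem for polynomials — choosing $p$ congruent to $q$ modulo the minimal polynomial of $A_1$ and congruent to $0$ modulo $x^{k}$, which are coprime since $A_1$ is invertible — but the explicit $x^{k} r(x)$ construction is more self-contained and is the route I would write up.
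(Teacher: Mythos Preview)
The paper does not supply its own proof of this theorem; it merely cites it from Campbell--Meyer, so there is no in-paper argument to compare against. Your proposal is correct and is essentially the classical proof: the core--nilpotent decomposition $\mathbb{C}^n = \Im A^k \oplus \ker A^k$, Cayley--Hamilton on the invertible block to express $A_1^{-(k+1)}$ as a polynomial $r(A_1)$, and then $p(x) = x^k r(x)$ to annihilate the nilpotent block while preserving the inverse on the invertible block. The bookkeeping step is fine because both summands are $A$-invariant. Your alternative remark via the Chinese Remainder Theorem (congruence to $q$ modulo the minimal polynomial of $A_1$ and to $0$ modulo $x^k$, coprime since $0$ is not an eigenvalue of $A_1$) is also a legitimate route and is in fact closer to how some texts phrase it, but the explicit $x^k r(x)$ construction you chose is clean and self-contained.
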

For a singular matrix $A,$
a unique Drazin inverse solution can be found by using the
Krylov iterative method under some proper consistency conditions.
For details, see \cite{campbell-meyer, ipsen-meyer}.

\begin{theorem}[\cite{ipsen-meyer}]
Let $m$ be the degree of the minimal polynomial for $A$,
and let $k$ be the index of $A$.
If $b\in \Im A^k$, then the linear system $Ax=b$ has a unique Krylov solution $x=A^D b \in \mcK_{m-k}(A,b)$.
If $b\not\in \Im A^k$, then $Ax=b$ does not have a solution in the Krylov space $\mcK_n(A,b)$.
\end{theorem}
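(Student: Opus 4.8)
The plan is to separate $\mathbb{C}^n$ into the two $A$-invariant subspaces that come out of the index: write $\mathbb{C}^n = \Im A^k \oplus \ker A^k$, where $k = \Ind(A)$, and record that $A$ restricted to $\Im A^k$ is invertible while $A$ restricted to $\ker A^k$ is nilpotent. First I would prove the ``$b\in\Im A^k$'' case. By the definition of the Drazin inverse, the candidate solution is $x^\star = A^D b$; since $b\in\Im A^k$, we have $A^D b = (A|_{\Im A^k})^{-1} b$, and applying $A$ gives $A x^\star = b$, so $x^\star$ is genuinely a solution. The substantive part is to show $x^\star$ lies in the Krylov space $\mcK_{m-k}(A,b) = \operatorname{Span}\{b, Ab, \dots, A^{m-k-1}b\}$ and is the unique solution there. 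For this I would invoke the preceding theorem: $A^D = p(A)$ for some polynomial $p$, hence $x^\star = p(A) b$ lies in \emph{some} Krylov space; to pin the degree down to $m-k$, I would factor the minimal polynomial as $\mu(x) = x^k q(x)$ with $q(0)\neq 0$ (this uses exactly that the index is $k$, so $x=0$ is a root of $\mu$ of multiplicity $k$), note $\deg q = m-k$, and then use that on $\Im A^k$ the operator $A$ is invertible with $q(A)|_{\Im A^k}=0$, so $(A|_{\Im A^k})^{-1}$ is a polynomial in $A$ of degree at most $m-k-1$ when applied to vectors of $\Im A^k$; applying this polynomial to $b\in\Im A^k$ produces $x^\star$ and exhibits it in $\mcK_{m-k}(A,b)$.

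For uniqueness within the Krylov space, I would use that two solutions differ by an element of $\ker A$, and intersect this with $\mcK_{m-k}(A,b)$; since $b\in\Im A^k$ generates, under iteration by $A$, a sequence of vectors all lying in $\Im A^k$ where $A$ is injective, the only element of $\mcK_{m-k}(A,b)\cap\ker A$ is $0$. Equivalently, one checks that $b, Ab, \dots, A^{m-k-1}b$ are linearly independent exactly because the minimal polynomial of $A$ acting on the cyclic subspace generated by $b\in\Im A^k$ has degree $m-k$ (it is $q$, not $\mu$, since the nilpotent part annihilates nothing in $\Im A^k$).

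For the second case, $b\notin\Im A^k$, I would argue by contradiction: if $x\in\mcK_n(A,b)$ solves $Ax=b$, then $b = Ax \in \Im A$. But more is true by iterating: writing $x = \sum_{j=0}^{n-1} c_j A^j b$ and decomposing $b = b_1 + b_2$ with $b_1\in\Im A^k$, $b_2\in\ker A^k$, the equation $Ax=b$ forces the $\ker A^k$-component to satisfy $b_2 = A(\text{nilpotent combination})$, and chasing the nilpotency (the restriction of $A$ to $\ker A^k$ has $A^k=0$ there) shows $b_2$ must itself lie in $\Im A^k\cap\ker A^k = \{0\}$, contradicting $b\notin\Im A^k$.

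The main obstacle I anticipate is the bookkeeping around the \emph{exact} degree $m-k$: it is easy to show $x^\star$ lies in some Krylov space, but proving the sharp bound and the matching linear-independence statement requires carefully using the factorization $\mu(x)=x^k q(x)$ together with the fact that $b\in\Im A^k$ makes the $x^k$ factor invisible, effectively replacing the minimal polynomial $\mu$ by its ``invertible part'' $q$ of degree $m-k$. Everything else is a routine consequence of the invariant-subspace decomposition supplied by the index.
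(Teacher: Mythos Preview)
The paper does not supply its own proof of this theorem: it is quoted verbatim as a result of Ipsen and Meyer, with the sentence ``For details, see \cite{campbell-meyer, ipsen-meyer}'' standing in for any argument. So there is no paper proof to compare your proposal against.

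On its own merits your outline is essentially the standard proof and is correct in its main lines. The decomposition $\mathbb{C}^n=\Im A^k\oplus\ker A^k$, the factorization $\mu(x)=x^k q(x)$ with $q(0)\neq0$, the observation that $q$ is the minimal polynomial of $A|_{\Im A^k}$ (hence $(A|_{\Im A^k})^{-1}$ is a polynomial in $A$ of degree $\le m-k-1$), and the uniqueness argument via $\ker A\cap\Im A^k=\{0\}$ are all sound. For the $b\notin\Im A^k$ case your ``chasing the nilpotency'' step is right: from $b_2=Ap(A)b_2$ one iterates to $b_2=A^j\,p(A)^j\,b_2$ for all $j$, and taking $j=k$ kills $b_2$ since $A^k|_{\ker A^k}=0$.

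One small overreach: you assert that $b,Ab,\dots,A^{m-k-1}b$ are linearly independent because ``the minimal polynomial of $A$ acting on the cyclic subspace generated by $b$ has degree $m-k$.'' That minimal polynomial \emph{divides} $q$ but need not equal it unless $b$ happens to be cyclic for $A|_{\Im A^k}$. Fortunately the theorem does not need this independence, only the containment $A^Db\in\mcK_{m-k}(A,b)$ and uniqueness there, both of which you have already established without it; so simply drop that parenthetical claim.
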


\subsection{Notations}
For $d=2$ or $3,$ let $\O=\Pi_{j=1}^d (0,\ell_j)\subset \BbbR^d$ denote a
$d$-dimensional rectangular domain.
Let $(\Tau_h)_{0<h<\min_{j=1}^d(\ell_j)}$ be
the quasiuniform family of triangulations of $\O$
into $d$--dimensional polyhedral subdomains $Q_h$'s
which are convex and topologically equivalent to $d$--dimensional cubes,
with maximum diameter bounded by the mesh parameter $h.$
We further assume that, for ${0<h<\min_{j=1}^d(\ell_j)},$
$\Tau_h$ is topologically and combinatorially
equivalent to the $N_{x_1}\times\cdots\times N_{x_d}$ uniform
$d$--dimensional rectangular decomposition, say $\widetilde\Tau_h.$
We will call that the sequences of elements, faces, and vertices in $\Tau_h$ are
aligned in the {\it topological $x_k$-direction} we mean they are images
of elements, faces, and vertices in $\widetilde\Tau_h$ aligned in the $x_k$-direction.

Let $\mcF_h$, $\mcF^i_h$, $\mcF^b_h$, and $\mcF^{b,opp}_h$ denote
the sets of all $(d-1)$--dimensional faces, interior faces, boundary faces,
and boundary face pairs on opposite boundary position, respectively.
Let $\mathcal{N}_h$ denote the set of all nodes in $\Tau_h$.
For periodic BC, we assume that for each $h$, $\Tau_h$ is decomposed
such that the periodically opposite boundary pairs in $\mcF^{b,opp}_h$ 
are congruent.

From now on, for each face $f$, let $\sig^{(\iota)}_f,\iota=i,m,$ denote the
functionals which take the face average value and the midpoint value
at the face midpoint $m_f$, respectively, such that
$\sig^{(i)}_f(u) = \frac1{|f|}\int_f u\ds$ and
$\sig^{(m)}_f(u) = u(m_f)$ for given function $u.$
We adopt several standard Sobolev spaces and discrete function spaces 
for the $P_1$--nonconforming quadrilateral finite element:
\begin{align*}
&C^\infty_{\#}(\O) = \text{the subset of $C^\infty(\BbbR^d)$
                        of $\O$-periodic functions restricted to }\O, \\
&H^1_{\#}(\O) = \overline{C^\infty_{\#}(\O)}^{H^1(\O)}, \q
H^1_{\#}(\O)/\BbbR = \{ v \in H^1_{\#}(\O) ~|~ \int_{\O} v = 0 \}, \\
&V^h = \{ v_h \in L^2(\O) ~|~ \left.v_h\right|_K \in \mathcal{P}_1(K) \forany K\in \Tau_h,
	\, \sig^{(i)}_f([v_h]_f) = 0 \forany f \in \mcF^i_h\}, \\
&V^h_0 = \{ v_h \in V^h ~|~  \sig^{(i)}_f(v_h) = 0 \forany f \in \mcF^b_h\}, \q
V^h_{\#} = \{ v_h \in V^h ~|~  \sig^{(i)}_{f_1}(v_h) =
                                                                    \sig^{(i)}_{f_2}(v_h)
  \\
  &\q\q \forany (f_1,f_2) \in \mcF^{b,opp}_h\}, \q V^h_{\#}/\BbbR = \{ v_h \in V^h_{\#} ~|~ \int_{\O} v_h = 0 \},
\end{align*}
where $\mathcal{P}_1(K)$ denotes the space of all linear polynomials on $K$
and $[\cdot]_f$ the jump across $(d-1)$-dimensional face $f$.
Let $\|\cdot\|_0$, $|\cdot|_1$, and $|\cdot|_{1,h}$ denote
the standard $L^2$-norm, $H^1$-(semi-)norm, and mesh-dependent energy norm in $\O$, respectively.

Here we define the concept of node--based functions.
For a given node $z$ in $\Tau_h$, let $\mcF_{(z)}$ denote the set of
all $(d-1)$-dimensional faces containing $z$.
Then we can construct a function $\phi_z \in V^h$ associated with $z$ such that
$
\sig_f^{(m)}(\phi_z) =
\begin{cases}
\frac12 & \text{if } f \in \mcF_{(z)}, \\
0 & \text{otherwise},
\end{cases}
$
where $m_f$ is the midpoint of $(d-1)$-dimensional face $f$ in $\mcF_h$.
We call $\phi_z$ {\it the node--based function associated with $z$.}
In the case of periodic BC for a rectangular domain $\O$,
we identify two boundary nodes in every opposite periodic position,
and four nodes at the corners of the boundary.
Using the node--based functions, we introduce a discrete function
space and a set of functions, which will be used often:
\begin{subeqnarray}
V^{\mfB,h}_{\#} &=& \{ v_h \in V^h_{\#} ~|~ v_h \in \Span \mfB \}, \\
\mfB &=& \{ \phi_z \}_{z \in \mathcal{N}^{\#}_h}: \text{the set of all node--based functions in $V^h_{\#}$},
\end{subeqnarray}
where $\mathcal{N}^{\#}_h$ denotes the set of all nodes with periodical identification. 
Notice that $|\mfB| = N_x N_y$ in the 2D case,
$|\mfB| =N_x N_y N_z$ in the 3D case, due to identification between nodes on boundary.
For $\mathfrak{S} \subset L^\infty(D),$ of size $|\mathfrak{S}|$ and
a scalar-valued (integrable) function $f,$ 
$\int_\mathcal{D} f \mathfrak{S}$ denotes a vector, of size $|\mathfrak{S}|$,
such that each component is the integral of the product of
$f$ and the corresponding element in $\mathfrak{S}$ over the domain $\mathcal{D}$.
$\mathbf{1}_{\mathfrak{S}}$ denotes a vector, size of $|\mathfrak{S}|$,
consisting of $1$ for all components.

\section{Dimension of the Finite Element Spaces}\label{sec:dim-finite-space}
\subsection{Induced relation between boundary barycenter values}
We consider a finite element space which approximates given function
space with given BC.
Then the barycenter values on boundary faces
in the $P_1$--nonconforming quadrilateral element space satisfy the
following condition: for all $u\in V^h$ 
\begin{eqnarray}\label{eq:dice-rule}
  \sig^{(m)}_{f_1}(u) + \sig^{(m)}_{f_1^{\text{opp}}}(u)  = \cdots =
  \sig^{(m)}_{f_d}(u) + \sig^{(m)}_{f_d^{\text{opp}}}(u)
\end{eqnarray}
for all pairs
$(f_j,f_j^{\text{opp}}) \in \mcF^{b,opp}_h(Q)$ for all $Q\in\Tau_h,$
where
$\mcF^{b,opp}_h(Q)$ denotes the set of all pairs consisting of
two boundary faces on opposite position. We will coin the above
formula \eqref{eq:dice-rule} as {\it the dice rule.}

We will concentrate on the case of $d=2$ in this section,
and Sections \ref{sec:basis-finite-space-periodic}--\ref{sec:numerical-scheme}.
The 3 dimensional case will be covered in \secref{sec:3d}.

Let $N_Q$ denote the number of all elements in $\Tau_h$.
Let $N_V$, $N_V^i$, and $N_V^b$ denote the number of all vertices,
of all interior vertices, and of all boundary vertices, respectively.
Similarly $N_E$, $N_E^i$, and $N_E^b$ denote the number of all edges,
of all interior edges, and of all boundary edges, respectively.
%
The vertices in $\Tau_h$ are grouped into Red and Black groups
such that any two vertices connected by an edge in $\Tau_h$
are not contained in the same group.

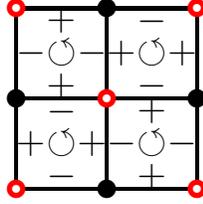
\begin{figure}[!ht]
\centering
\setlength{\unitlength}{12mm}
\begin{picture}(2,2)(0,-0.2)
\linethickness{0.4mm}
\multiput(0,0)(1,0){3}{\line(0,1){2}}
\multiput(0,0)(0,1){3}{\line(1,0){2}}

\Large
\put(0.0, 0.5){\makebox(0.0,0.0)[l]{$+$}}
\put(0.5, 0.0){\makebox(0.0,0.0)[b]{$-$}}
\put(1.0, 0.5){\makebox(0.0,0.0)[r]{$+$}}
\put(0.5, 1.0){\makebox(0.0,0.0)[t]{$-$}}
\put(0.5, 0.5){\makebox(0.0,0.0)[c]{$\circlearrowleft$}}
\put(1.0, 0.5){\makebox(0.0,0.0)[l]{$-$}}
\put(1.5, 0.0){\makebox(0.0,0.0)[b]{$+$}}
\put(2.0, 0.5){\makebox(0.0,0.0)[r]{$-$}}
\put(1.5, 1.0){\makebox(0.0,0.0)[t]{$+$}}
\put(1.5, 0.5){\makebox(0.0,0.0)[c]{$\circlearrowleft$}}
\put(0.0, 1.5){\makebox(0.0,0.0)[l]{$-$}}
\put(0.5, 1.0){\makebox(0.0,0.0)[b]{$+$}}
\put(1.0, 1.5){\makebox(0.0,0.0)[r]{$-$}}
\put(0.5, 2.0){\makebox(0.0,0.0)[t]{$+$}}
\put(0.5, 1.5){\makebox(0.0,0.0)[c]{$\circlearrowleft$}}
\put(1.0, 1.5){\makebox(0.0,0.0)[l]{$+$}}
\put(1.5, 1.0){\makebox(0.0,0.0)[b]{$-$}}
\put(2.0, 1.5){\makebox(0.0,0.0)[r]{$+$}}
\put(1.5, 2.0){\makebox(0.0,0.0)[t]{$-$}}
\put(1.5, 1.5){\makebox(0.0,0.0)[c]{$\circlearrowleft$}}

\put(1.0, 0.0){\circle*{0.2}}
\put(0.0, 1.0){\circle*{0.2}}
\put(2.0, 1.0){\circle*{0.2}}
\put(1.0, 2.0){\circle*{0.2}}
\color{red}
\put(0.0, 0.0){\circle*{0.2}}
\put(2.0, 0.0){\circle*{0.2}}
\put(1.0, 1.0){\circle*{0.2}}
\put(0.0, 2.0){\circle*{0.2}}
\put(2.0, 2.0){\circle*{0.2}}
\color{white}
\put(0.0, 0.0){\circle*{0.1}}
\put(2.0, 0.0){\circle*{0.1}}
\put(1.0, 1.0){\circle*{0.1}}
\put(0.0, 2.0){\circle*{0.1}}
\put(2.0, 2.0){\circle*{0.1}}
\end{picture}
\caption{For each element, the signs on its edges are chosen 
  $+$ if the edges are from a Black to Red vertices, and $-$ otherwise.
}
\label{fig:dice-rule-same-orientation}
\end{figure}
A fixed orientation of edges is chosen throughout the all elements in $\Tau_h.$
For instance, we impose the plus sign on an edge if its direction is
from Red to Black, and the minus sign if the direction is opposite.
The local signs on edges in each element induce a relation between
4 midpoint values on the element which corresponds to the dice rule:
\begin{align*}
  \sum_{j=1}^4 (-1)^j  \sig_{f_j^K}^{(m)}(v_h) = 0,\,  \forany v_h\in V^h ,\,
f_j^K\text{ being the }j^{th} \text{ edge of } K,
\forany K\in\Tau_h,
\end{align*}
Since two local signs on both sides of an interior edge are always opposite,
the sum of all locally induced relations reduces to a relation
between midpoint values on boundary edges only.
Note that the number of boundary edges in $\Tau_h$ is always even
and the remaining signs are alternating along the boundary.
\figref{fig:dice-rule-same-orientation} shows an example of
orientation and induced signs on edges. The following lemma is easy
but essential to the nonconofrming $P_1$ element $V^h.$
\begin{lemma}\label{lem:relation-boundary-dof}
There exists a way to give alternating sign on boundary edges.
Moreover, the alternating sum of boundary midpoint values of
  $v_h \in V^{h}$ is always zero, whenever the domain is simply connected.
\end{lemma}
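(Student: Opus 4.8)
The strategy is to add the element-wise dice rule over all of $\Tau_h$ and identify what is left on the boundary. Using the Red/Black grouping of the vertices of $\Tau_h$ introduced above (which is available because $\O$ is simply connected, so that the graph formed by the vertices and edges of $\Tau_h$ is bipartite), orient every edge of $\Tau_h$ from its Red endpoint to its Black endpoint. For an element $K$ and an edge $f\subset\partial K$, set $\epsilon_{K,f}=+1$ if this orientation agrees with the counterclockwise traversal of $\partial K$ and $\epsilon_{K,f}=-1$ otherwise. Since the vertex colours alternate around $\partial K$, the signs $\epsilon_{K,\cdot}$ alternate over the four edges of $K$, so after a cyclic relabelling we may assume $(-1)^j=\epsilon_{K,f_j^K}$; with this labelling the dice rule \eqref{eq:dice-rule} on $K$ reads $\sum_{f\subset\partial K}\epsilon_{K,f}\,\sig_f^{(m)}(v_h)=0$ for all $v_h\in V^h$, which is nothing but the affine identity $w(\tfrac{P_1+P_2}{2})+w(\tfrac{P_3+P_4}{2})=w(\tfrac{P_2+P_3}{2})+w(\tfrac{P_4+P_1}{2})$ for the linear function $w=v_h|_K$ and the vertices $P_1,\dots,P_4$ of $K$ listed cyclically.

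Next I would sum these identities over all $K\in\Tau_h$. An interior edge $f=\partial K^+\cap\partial K^-$ is traversed in opposite directions by the counterclockwise boundary walks of $K^+$ and $K^-$, so $\epsilon_{K^+,f}=-\epsilon_{K^-,f}$ and the term $\sig_f^{(m)}(v_h)$ cancels. Only boundary edges survive, and one is left with
\begin{align*}
\sum_{f\in\mcF^b_h}\epsilon_{K(f),f}\,\sig_f^{(m)}(v_h)=0\qquad\forany v_h\in V^h,
\end{align*}
where $K(f)$ denotes the unique element with $f\subset\partial K(f)$. This already produces a sign assignment $f\mapsto\epsilon_{K(f),f}$ on $\mcF^b_h$ together with the vanishing relation, and it only remains to check that this assignment is alternating along $\partial\O$.

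Since $\O$ is simply connected, $\partial\O$ is a single closed polygonal curve, which I would walk counterclockwise. Along any boundary edge $f$ this walk runs in the same direction as the counterclockwise traversal of $\partial K(f)$, because the element $K(f)$ lies to the left of both; hence $\epsilon_{K(f),f}$ records whether the walk agrees with the Red-to-Black orientation of $f$. At a boundary vertex $z$ this agreement flips: if $z$ is Red, the boundary edge arriving at $z$ points toward $z$ against its orientation while the boundary edge leaving $z$ points away from $z$ along its orientation, and the case $z$ Black is the mirror image. Therefore consecutive boundary edges carry opposite signs, i.e. the $\epsilon_{K(f),f}$ alternate around $\partial\O$; for this pattern to close up consistently around the cycle, the number $N_E^b$ of boundary edges must be even. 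These two observations together give both assertions of the lemma.

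The only genuine obstacle is the bookkeeping in the last paragraph: one must reconcile the $N_Q$ separate counterclockwise orientations of the elements with a single global orientation of $\partial\O$ and confirm the sign flip at every boundary vertex. This is precisely where simple connectedness enters --- both to Red/Black colour the vertices and to make $\partial\O$ a single cycle, so that the alternating pattern is globally consistent and $N_E^b$ is even; the rest is just telescoping of the local relations.
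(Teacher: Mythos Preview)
Your proposal is correct and follows essentially the same approach as the paper: the paper also uses the Red/Black bipartition of vertices to induce element-wise signs realising the dice rule, then sums over all elements so that interior contributions cancel and only an alternating boundary relation survives. Your write-up is more explicit about the bookkeeping (the per-element sign $\epsilon_{K,f}$, the cancellation on interior edges via opposite counterclockwise traversals, and why the signs alternate around $\partial\O$), but the underlying argument is the same as the informal justification the paper gives just before stating the lemma.
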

\subsection{Minimally essential discrete BCs}
Among all the midpoint values of a given essential BC
only a subset of them is enough to impose consistent discrete
boundary values.
We call a set of discrete BCs {\it minimally essential}
if essential boundary midpoint values in the set induce all other
essential boundary midpoint values naturally,
but any proper subset of the set does not.

Since each discrete essential BC removes
the dimension of the space by 1,
the number of subtracted DOFs due to essential BCs
is just equal to the number of minimally essential discrete BCs.
It recovers a well-known fact for the dimension of the finite element spaces with Neumann
and homogeneous Dirichlet BC.

\begin{lemma}\label{lem:dim-formula-2d}
$  \dim(V^h)=
	 N_E - N_Q 
	- \#(\text{minimally essential discrete BCs}).
$
\end{lemma}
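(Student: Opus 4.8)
The plan is to count degrees of freedom for $V^h$ directly and then identify which constraints are genuinely independent. First I would describe $V^h$ before imposing any boundary condition: a function $v_h \in V^h$ is determined by its midpoint values $\sig^{(m)}_f(v_h)$ over all $f \in \mcF_h$, subject to two families of linear constraints — the dice rule on each element $Q \in \Tau_h$ (one relation per element, as in \eqref{eq:dice-rule}), and the interior midpoint-continuity conditions, which are already built into the definition of the midpoint-value parametrization once we assign a single value to each interior face rather than to each element-face pair. So the naive parameter count is $N_E$ (one value per edge), and each of the $N_Q$ dice rules removes one parameter. The first key step is therefore to verify that the $N_Q$ dice-rule relations are \emph{linearly independent} as functionals on the space of edge-value assignments: since each element $Q$ has at least one edge not shared with any other element's dice rule in a way that lets a standard incidence/triangular-structure argument go through (order the elements so that each successive dice rule involves a new edge), their rank is exactly $N_Q$. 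This gives $\dim(V^h) = N_E - N_Q$ in the absence of essential boundary conditions.

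Next I would bring in the essential boundary condition. By definition of \emph{minimally essential discrete BCs}, the prescribed boundary midpoint values split into a minimal generating subset $S$ and the remaining values, which are determined from $S$ together with the dice rules (equivalently, using \lemref{lem:relation-boundary-dof}, the alternating-sum relation and its element-level refinements). The second key step is to argue that imposing the values in $S$ cuts the dimension by exactly $|S| = \#(\text{minimally essential discrete BCs})$: each such constraint is a nontrivial linear functional on $V^h$, and by the minimality hypothesis no one of them is a consequence of the others together with the dice rules — otherwise a proper subset of $S$ would already induce all boundary values, contradicting minimality. Hence the $|S|$ functionals are linearly independent on $V^h$, and passing to the subspace where they vanish (or take prescribed values) drops the dimension by precisely $|S|$.

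Combining the two steps yields
\begin{equation*}
  \dim(V^h) = N_E - N_Q - \#(\text{minimally essential discrete BCs}),
\end{equation*}
which is the claimed formula; the final remark is that for the Neumann case $S = \emptyset$ (no essential condition) and for the homogeneous Dirichlet case $|S| = N_E^b - 1$ by \lemref{lem:relation-boundary-dof}, recovering the classical dimension counts as advertised.

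I expect the main obstacle to be the rank computation in the first step — precisely, showing that the collection of $N_Q$ dice-rule functionals has full rank $N_Q$ on the edge-value space. The clean way to see this is combinatorial: pick a spanning tree of the element-adjacency graph, root it, and peel off leaf elements one at a time; each leaf element, when removed, exposes at least one boundary-of-the-remaining-region edge that appears in its dice rule but in no dice rule of the remaining elements, giving the triangular (hence full-rank) structure. A secondary subtlety is making sure the bookkeeping is consistent with the periodic identification of boundary nodes/edges, but since this lemma is stated for the general (simply connected) situation and the periodic case is treated separately in later sections, here it suffices to keep the argument at the level of \emph{arbitrary} essential discrete BCs and invoke the minimality definition abstractly.
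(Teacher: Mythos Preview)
Your proposal is correct and follows the same counting approach as the paper: parametrize by edge-midpoint values, subtract one dice rule per element, then subtract the minimally essential boundary constraints. The paper actually treats this lemma as essentially immediate---its entire justification is the single sentence ``Since each discrete essential BC removes the dimension of the space by $1$, the number of subtracted DOFs due to essential BCs is just equal to the number of minimally essential discrete BCs''---so your write-up, with the explicit leaf-peeling rank argument for the $N_Q$ dice rules and the minimality-$\Rightarrow$-independence argument for the boundary constraints, supplies considerably more detail than the paper itself does.
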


\begin{proposition}For Neumann and Dirichlet BCs, we have
\begin{align*}
\# \text{(minimally essential discrete BCs)}
=
\begin{cases}
0 & \text{for Neumann BC,} \\
N^b_E -1 & \text{for homogeneous Dirichlet BC}.
\end{cases}
\end{align*}
Consequently,
$\dim V^h = N_E-N_Q = N_V-1,$ and $\dim V^h_0 = N_E-N_Q-(N_E^b-1) = N_V^i.$
\end{proposition}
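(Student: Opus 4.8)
The plan is to compute $\#(\text{minimally essential discrete BCs})$ for each boundary condition separately and then substitute into the formula of \lemref{lem:dim-formula-2d}, using the Euler-type relation that must already be implicit in the counting.

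\textbf{The Neumann case.} For the Neumann problem the natural function space $V^h$ itself is used, so no boundary midpoint value is prescribed as essential. Hence $\#(\text{minimally essential discrete BCs})=0$ trivially, and \lemref{lem:dim-formula-2d} gives $\dim V^h = N_E - N_Q$. To rewrite this as $N_V-1$, I would invoke the standard relation among the face/element/vertex counts for a quadrilateral mesh of a simply connected planar domain. Counting edge--element incidences gives $4N_Q = 2N_E^i + N_E^b$, and Euler's formula $N_V - N_E + (N_Q+1) = 2$ (the $+1$ for the unbounded face) yields $N_E - N_Q = N_V - 1$. So I would simply state these two identities and do the one-line arithmetic.

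\textbf{The homogeneous Dirichlet case.} Here all $N_E^b$ boundary midpoint values are required to vanish. The content of \lemref{lem:relation-boundary-dof} is that the alternating sum of the boundary midpoint values of any $v_h\in V^h$ is automatically zero; equivalently, there is exactly one linear dependency among the $N_E^b$ boundary midpoint functionals when restricted to $V^h$. Therefore imposing zero at any $N_E^b-1$ of the boundary midpoints forces the remaining one to be zero as well, while no smaller subset suffices (removing a second one leaves a genuine degree of freedom on the boundary, since apart from that single alternating relation the boundary midpoint values are independent). This is precisely the statement that the minimally essential set has cardinality $N_E^b - 1$. I would phrase the ``no proper subset works'' direction by noting that the boundary midpoint evaluation map $V^h \to \BbbR^{N_E^b}$ has rank exactly $N_E^b-1$ — its image is the hyperplane cut out by the alternating-sum functional from \lemref{lem:relation-boundary-dof}, and surjectivity onto that hyperplane follows because one can build a $V^h$ function realizing any admissible boundary data (e.g.\ extend by solving locally, or more simply use the node-based functions $\phi_z$). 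Then \lemref{lem:dim-formula-2d} gives $\dim V^h_0 = N_E - N_Q - (N_E^b-1) = (N_V-1)-(N_E^b-1) = N_V - N_E^b = N_V^i$, the last equality being $N_V = N_V^i + N_V^b$ together with $N_V^b = N_E^b$ (on the boundary cycle the vertices and edges are equinumerous).

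\textbf{Main obstacle.} The only genuinely nontrivial point is justifying that the boundary midpoint evaluation has rank \emph{exactly} $N_E^b-1$ and not less — i.e.\ that \lemref{lem:relation-boundary-dof} exhibits the \emph{only} constraint. Establishing this amounts to showing the image of the evaluation map fills the entire hyperplane, which I expect to do by an explicit construction: given boundary data summing alternately to zero, pick values at interior midpoints (there is enough freedom, since interior midpoints carry no essential constraint and the dice rule per element is a single linear relation among four midpoints that can always be satisfied) and verify consistency element by element. Everything else is bookkeeping with the incidence identities $4N_Q = 2N_E^i + N_E^b$, $N_E = N_E^i + N_E^b$, $N_V = N_V^i + N_V^b$, $N_V^b = N_E^b$, and Euler's formula, all of which hold for a simply connected quadrilateral mesh.
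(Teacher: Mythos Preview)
The paper does not actually supply a proof of this proposition: it is stated as an immediate consequence of \lemref{lem:dim-formula-2d} together with the single alternating-sum relation of \lemref{lem:relation-boundary-dof}, and the remark following it points to Theorems~2.5 and~2.8 of \cite{park-sheen-p1nc} for the dimension counts. Your proposal is therefore not so much a different route as a fleshing-out of what the paper leaves implicit; in particular your use of Euler's formula and the incidence identity $4N_Q = 2N_E^i + N_E^b$ to convert $N_E - N_Q$ into $N_V-1$, and $N_V^b = N_E^b$ for the Dirichlet count, is exactly the bookkeeping the paper presumes but does not write out.

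You are also right that the only substantive point is that \lemref{lem:relation-boundary-dof} gives the \emph{unique} linear relation among the boundary midpoint functionals, i.e.\ that the boundary evaluation map has rank exactly $N_E^b-1$. The paper does not spell this out here either; it is implicit in the ``minimally essential'' discussion and ultimately rests on the construction in \cite{park-sheen-p1nc}. Your suggested argument (realize any boundary data with zero alternating sum by an explicit $V^h$ function, e.g.\ via node-based functions) is the natural way to make it rigorous, and nothing in it is problematic.
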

\begin{remark} The proposition generalizes the dimensions for the
  homogeneous Dirichlet and Neumann BCs given in
Theorems 2.5 and 2.8 in \cite{park-sheen-p1nc}.
\end{remark}

For periodic BCs,
the conditions enforce two midpoint values
on two opposite boundary edges to be equal.
Therefore minimally essential discrete BCs form a smallest set of
periodic relations between opposite boundary edges
which induce all such periodic relations.

Depending on the parity of $N_x$ and $N_y$,
the behavior varies.
\begin{enumerate}
  \item[Case 1.] First, suppose both $N_x$ and $N_y$ are even.
We can easily derive the last periodic relation
from the other periodic relations
with the help of the relation between boundary midpoint values in \lemref{lem:relation-boundary-dof}.
This means that a set of all periodic relations except any one of them
is minimally essential.
  \item[Case 2.]
Next, consider the case where
either $N_x$ or $N_y$ is odd.
Then we can not have such a natural induction as in the Case 1,
which means that a set of all periodic relations itself is minimally essential,
see \figref{fig:induced-relation-boundary-values-even-odd}.
\end{enumerate}

\begin{figure}[!ht]
\centering
\setlength{\unitlength}{6mm}
\begin{picture}(4,4)(0,-0.5)
\linethickness{0.4mm}
\multiput(0,0)(1,0){5}{\line(0,1){4}}
\multiput(0,0)(0,1){5}{\line(1,0){4}}

\scriptsize
\put(0.0, 0.5){\makebox(0.0,0.0)[r]{$+$}}
\put(0.0, 1.5){\makebox(0.0,0.0)[r]{$-$}}
\put(0.0, 2.5){\makebox(0.0,0.0)[r]{$+$}}
\put(0.0, 3.5){\makebox(0.0,0.0)[r]{$-$}}
\put(0.5, 4.0){\makebox(0.0,0.0)[b]{$+$}}
\put(1.5, 4.0){\makebox(0.0,0.0)[b]{$-$}}
\put(2.5, 4.0){\makebox(0.0,0.0)[b]{$+$}}
\put(3.5, 4.0){\makebox(0.0,0.0)[b]{$-$}}
\put(4.0, 3.5){\makebox(0.0,0.0)[l]{$+$}}
\put(4.0, 2.5){\makebox(0.0,0.0)[l]{$-$}}
\put(4.0, 1.5){\makebox(0.0,0.0)[l]{$+$}}
\put(4.0, 0.5){\makebox(0.0,0.0)[l]{$-$}}
\put(3.5, 0.0){\makebox(0.0,0.0)[t]{$+$}}
\put(2.5, 0.0){\makebox(0.0,0.0)[t]{$-$}}
\put(1.5, 0.0){\makebox(0.0,0.0)[t]{$+$}}
\put(0.5, 0.0){\makebox(0.0,0.0)[t]{$-$}}
\end{picture}
\hspace{3cm}
\begin{picture}(3,4)(0,-1)
\linethickness{0.4mm}
\multiput(0,0)(1,0){4}{\line(0,1){3}}
\multiput(0,0)(0,1){4}{\line(1,0){3}}

\scriptsize
\put(0.0, 0.5){\makebox(0.0,0.0)[r]{$+$}}
\put(0.0, 1.5){\makebox(0.0,0.0)[r]{$-$}}
\put(0.0, 2.5){\makebox(0.0,0.0)[r]{$+$}}
\put(0.5, 3.0){\makebox(0.0,0.0)[b]{$-$}}
\put(1.5, 3.0){\makebox(0.0,0.0)[b]{$+$}}
\put(2.5, 3.0){\makebox(0.0,0.0)[b]{$-$}}
\put(3.0, 2.5){\makebox(0.0,0.0)[l]{$+$}}
\put(3.0, 1.5){\makebox(0.0,0.0)[l]{$-$}}
\put(3.0, 0.5){\makebox(0.0,0.0)[l]{$+$}}
\put(2.5, 0.0){\makebox(0.0,0.0)[t]{$-$}}
\put(1.5, 0.0){\makebox(0.0,0.0)[t]{$+$}}
\put(0.5, 0.0){\makebox(0.0,0.0)[t]{$-$}}
\end{picture}
\caption{Induced relation between boundary midpoint values}
\label{fig:induced-relation-boundary-values-even-odd}
\end{figure}
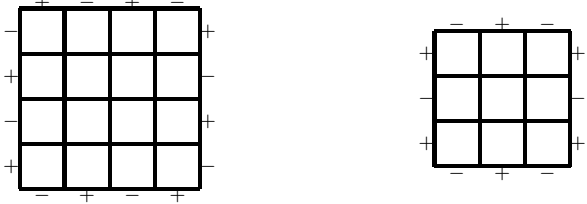

We summarize the above as in the following proposition:
\begin{proposition}(Periodic BC)\label{prop:dim-periodic}
In the case of periodic BC on $N_x \times N_y$ rectangular mesh,
$
 \# \text{(minimally essential discrete BCs)}=N_x + N_y -\eps(N_x)\eps(N_y).
$
Consequently, $
\dim V^h_{\#} = N_x N_y + \eps(N_x)\eps(N_y),$ where
$ \eps(j) := \frac{1+(-1)^{j}}{2}.$
\end{proposition}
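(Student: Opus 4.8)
The plan is to reduce the statement to a rank computation in $(V^h)^*$ and then to carry out that computation by bookkeeping signs along the boundary. On an $N_x\times N_y$ rectangular mesh one has $N_Q=N_xN_y$ and, counting horizontal and vertical edges, $N_E=2N_xN_y+N_x+N_y$, so \lemref{lem:dim-formula-2d} gives $\dim V^h_{\#}=N_xN_y+N_x+N_y-\#(\text{minimally essential discrete BCs})$, and the second assertion of the proposition follows once the first is proved. Now the periodic BC amounts to imposing on $V^h$ the $N_x+N_y$ functionals $L^b_i:=\sigma^{(m)}_{b_i}-\sigma^{(m)}_{t_i}$ ($1\le i\le N_x$) and $L^l_j:=\sigma^{(m)}_{l_j}-\sigma^{(m)}_{r_j}$ ($1\le j\le N_y$), where $(b_i,t_i)$ and $(l_j,r_j)$ are the opposite bottom/top and left/right boundary edge pairs and $\sigma^{(m)}_f=\sigma^{(i)}_f$ on $V^h$ (the edge average of an affine function equals its midpoint value). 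A subset of these relations induces all of them exactly when it spans $\Span\{L^b_i,L^l_j\}\subseteq(V^h)^*$, so the minimally essential subsets are precisely the bases of that span and $\#(\text{minimally essential discrete BCs})=\operatorname{rank}\{L^b_i,L^l_j\}$.

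The key step is to show that the only linear relation $\sum_{f\in\mcF^b_h}c_f\,\sigma^{(m)}_f=0$ holding on all of $V^h$ is a scalar multiple of the alternating relation $\sum_{f\in\mcF^b_h}s_f\,\sigma^{(m)}_f=0$ of \lemref{lem:relation-boundary-dof}, where $s_f\in\{\pm1\}$ is the local sign of the boundary edge $f$. Indeed, the midpoint-value map $V^h\hookrightarrow\BbbR^{N_E}$ is injective with image of dimension $N_E-N_Q$ (by the preceding proposition), so the $N_Q$ dice-rule vectors $v^K=\sum_{f\subset\partial K}s^K_f e_f$ are linearly independent and their span is exactly the space of all such relations; requiring a combination $\sum_K d_K v^K$ to vanish on every interior edge forces $d_K=d_{K'}$ across each interior edge (the two local signs being opposite there), hence $d_K\equiv d$ by connectedness of $\Tau_h$, which leaves precisely the one-parameter family $d\cdot(s_f)_{f\in\mcF^b_h}$.

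With this in hand, any dependence $\sum_i\alpha_iL^b_i+\sum_j\beta_jL^l_j=0$ on $V^h$ is one of these boundary relations, so by comparing coefficients there is a $d$ with $\alpha_i=d s_{b_i}=-d s_{t_i}$ and $\beta_j=d s_{l_j}=-d s_{r_j}$; hence $d(s_{b_i}+s_{t_i})=0$ and $d(s_{l_j}+s_{r_j})=0$. Because $s$ alternates along the boundary, $s_{t_i}=-s_{b_i}$ for all $i$ iff the number of boundary edges traversed from $b_i$ to $t_i$ is odd, and a direct count shows this number is $\equiv N_y+1\pmod 2$, so the condition holds iff $N_y$ is even; symmetrically $s_{r_j}=-s_{l_j}$ for all $j$ iff $N_x$ is even. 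Therefore, if both $N_x$ and $N_y$ are even every $d$ yields the dependence $(\alpha_i,\beta_j)=(d s_{b_i},d s_{l_j})$ and these exhaust all dependences, so the dependences form a one-dimensional space and $\operatorname{rank}\{L^b_i,L^l_j\}=N_x+N_y-1$; if instead $N_x$ or $N_y$ is odd, then $d=0$ is forced, the $L$'s are linearly independent, and the rank is $N_x+N_y$. In both cases this equals $N_x+N_y-\eps(N_x)\eps(N_y)$, and substituting into the dimension formula above gives $\dim V^h_{\#}=N_xN_y+\eps(N_x)\eps(N_y)$.

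The step I expect to be the main obstacle is establishing that one-dimensionality of the space of boundary relations valid on $V^h$, i.e. that the $P_1$--nonconforming structure forces no relation among the boundary midpoint values beyond the alternating one; once that is available, the remainder is the parity bookkeeping that converts the even/odd case distinction into the factor $\eps(N_x)\eps(N_y)$.
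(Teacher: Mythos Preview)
Your argument is correct and follows the same underlying idea as the paper: use the alternating boundary relation of \lemref{lem:relation-boundary-dof} and a parity check on opposite boundary edges to decide whether the $N_x+N_y$ periodic constraints are dependent or not. The paper's own justification is the informal two-case discussion (Cases~1 and~2 with \figref{fig:induced-relation-boundary-values-even-odd}) immediately preceding the proposition; it simply observes that when both $N_x,N_y$ are even the alternating relation lets one drop a single periodic constraint, and otherwise it does not.

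Where your write-up adds value is the step you flagged as the main obstacle: you actually \emph{prove} that the space of linear relations among boundary midpoint functionals on $V^h$ is one-dimensional, spanned by the alternating relation. The paper uses this implicitly (it is what makes the phrase ``minimally essential'' well-defined and makes \lemref{lem:dim-formula-2d} a genuine formula rather than an inequality), but never states or proves it. Your argument via the annihilator of the midpoint-value embedding $V^h\hookrightarrow\BbbR^{N_E}$ and the linear independence of the dice-rule vectors $v^K$ is clean and closes that gap. The parity bookkeeping that follows (the position difference $\equiv N_y+1\pmod 2$ between $b_i$ and $t_i$ along the boundary cycle) is exactly what the figure is meant to convey, only made explicit.
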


%
%

\section{Bases for Finite Element Spaces with Periodic BC}\label{sec:basis-finite-space-periodic}
In this section, we investigate bases for $V^h_{\#}$.
\subsection{Linear dependence of $\mfB$}\label{sec:dependence-B}
We write $\mfB = \{ \phi_{z_1}, \phi_{z_2}, \cdots, \phi_{z_{|\mfB|}}
\},$
the set of all  node--based functions in $V^h_{\#}.$
Define a surjective linear map
$B^{\mfB}_h:\BbbR^{|\mfB|} \rightarrow V^{\mfB,h}_{\#}$
by $B^{\mfB}_h(\mathbf{c}) = \sum_{j=1}^{|\mfB|} c_j \phi_{z_j}$
where $\mathbf{c} = (c_j) \in \BbbR^{|\mfB|}$.
For any $\mathbf{c} = (c_j) \in \ker B^{\mfB}_h$,
we have
\begin{eqnarray}\label{eq:chain-rel}
  c_k = -  c_\ell
\text{ for all vertex pair } (z_k, z_\ell) \text{ which are two end nodes of an edge.}
\end{eqnarray}
\eqref{eq:chain-rel} means that $\dim\ker B^{\mfB}_h\le 1.$
Due to the periodicity,
those relations are consistent only if the number of discretization on each coordinate is even,
and in such a case $\dim\ker B^{\mfB}_h=1.$
Indeed, in this case any $|\mfB|-1$ functions in $\mfB$ form a basis
for $V^{\mfB,h}_{\#}.$
On the other hand, consider the case where either $N_x$ or $N_y$ is odd.
Without loss of generality, we may assume that $N_x$ is odd. Then
a chain of such relation \eqref{eq:chain-rel} 
along the $x$--direction cannot occur unless $\mathbf{c}$ is trivial
since the values at four values at the corners of $\O$ should match.
This concludes that $\dim\ker B^{\mfB}_h = 0.$
We summarize the above result as the following proposition.
\begin{proposition}\label{prop:dim-ker}(The dimension of $\ker B^{\mfB}_h$ and $V^{\mfB,h}_{\#}$)
\begin{align}
\dim \ker B^{\mfB}_h = \eps(N_x)\eps(N_y).
\end{align}
Moreover, $\mfB^\flat= \{\phi_1, \cdots , \phi_{|\mfB| -1}\}$ forms a basis for $V^{\mfB,h}_{\#}$
if both $N_x$ and $N_y$ are even,
whereas $\mfB$ itself is a basis for $V^{\mfB,h}_{\#}$
if either $N_x$ or $N_y$ is odd.
Consequently,
\begin{align}
\dim V^{\mfB,h}_{\#}= |\mfB|-\dim \ker B^{\mfB}_h= N_xN_y -\eps(N_x)\eps(N_y).
\end{align}
\end{proposition}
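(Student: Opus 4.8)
The plan is to analyze the kernel of the surjective linear map $B^{\mfB}_h$ directly, by exploiting the two defining mechanisms behind node–based functions in $V^h_\#$: the local relation $\sum_{j=1}^4(-1)^j\sig_{f_j^K}^{(m)}(\phi_z)=0$ on each element (the dice rule) together with the interior continuity, and the periodic identification of opposite boundary nodes. First I would establish \eqref{eq:chain-rel}: if $\mathbf{c}=(c_j)\in\ker B^{\mfB}_h$, then $\sum_j c_j\phi_{z_j}\equiv 0$, and evaluating the midpoint functional $\sig_f^{(m)}$ on any edge $f=(z_k,z_\ell)$ picks out exactly the two contributions $\tfrac12 c_k+\tfrac12 c_\ell$, forcing $c_k=-c_\ell$. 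This is the engine of the whole argument and it shows that $\mathbf{c}$ is determined by one value propagated with alternating sign along any edge path; in particular $\dim\ker B^{\mfB}_h\le 1$.

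The second step is the consistency analysis, which splits on parity. Using the Red/Black bipartition of the vertices (guaranteed for a simply connected domain as in the discussion preceding \lemref{lem:relation-boundary-dof}), the relation $c_k=-c_\ell$ is consistent around every element loop, so it forces $c_j=+c$ on Red vertices and $c_j=-c$ on Black vertices for a scalar $c$. The obstruction is the periodic identification: an identified pair of opposite boundary nodes must receive the same coefficient, hence must lie in the same color class. I would check that this compatibility holds for all identified pairs exactly when both $N_x$ and $N_y$ are even — the horizontal translation by the period shifts color parity by $N_x$ and the vertical one by $N_y$ — whereas if, say, $N_x$ is odd, an identified pair has opposite colors, forcing $c=-c$, i.e. $c=0$ and $\ker B^{\mfB}_h=0$. (Equivalently, as the excerpt phrases it, a chain of \eqref{eq:chain-rel} along the $x$–direction closes up consistently only in the even case, because the four corner nodes of $\O$ are all identified.) This yields $\dim\ker B^{\mfB}_h=\eps(N_x)\eps(N_y)$.

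The third step deduces the basis statements and the dimension formula. Since $\Span\mfB=V^{\mfB,h}_\#$ by definition and $B^{\mfB}_h$ is surjective, the rank–nullity theorem gives $\dim V^{\mfB,h}_\# = |\mfB|-\dim\ker B^{\mfB}_h = N_xN_y-\eps(N_x)\eps(N_y)$, using $|\mfB|=N_xN_y$ recorded in the Notations. When $N_x$ or $N_y$ is odd, $\ker B^{\mfB}_h=0$ makes $\mfB$ itself a basis. When both are even, $\dim\ker B^{\mfB}_h=1$ and the kernel is spanned by the all-Red/all-Black sign vector, which has every component nonzero; therefore dropping any single $\phi_{z_j}$ — in particular $\phi_{z_{|\mfB|}}$ — leaves a set of $|\mfB|-1$ functions that still spans (the omitted one is a combination of the rest via the kernel relation) and has the right cardinality, hence a basis; this is $\mfB^\flat$.

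The main obstacle I anticipate is making the parity/consistency step fully rigorous rather than picture-driven: one must argue carefully that the alternating-sign propagation along edge paths is well defined (independent of path), which is precisely the existence of the Red/Black two-coloring and uses simple connectedness, and then that the periodic gluing preserves or reverses color exactly according to the parity of $N_x,N_y$. Everything else — the evaluation computation for \eqref{eq:chain-rel} and the rank–nullity bookkeeping — is routine.
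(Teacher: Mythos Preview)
Your proposal is correct and follows essentially the same approach as the paper: the paper's argument in \secref{sec:dependence-B} also derives \eqref{eq:chain-rel} by evaluating at edge midpoints, bounds $\dim\ker B^{\mfB}_h\le 1$, and then performs the same parity-consistency check (phrased there as a chain along the $x$--direction and corner matching rather than via the Red/Black coloring, though the two are equivalent). Your write-up is more detailed than the paper's terse discussion, but the underlying ideas and structure are the same.
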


\subsection{A basis for $V^h_{\#}$}\label{sec:alt-basis}
First, consider the case where both $N_x$ and $N_y$ are even.
\proprefs{prop:dim-periodic}{prop:dim-ker} imply
that $\mfB$ is linearly dependent and 
$V^{\mfB,h}_{\#}$ is a proper subset of $V^h_{\#}$ with
$ \dim(V^h_{\#})- \dim (V^{\mfB,h}_{\#}) =2\eps(N_x)\eps(N_y)=2$,
which means that there exist two {\it complementary basis functions}
for $V^h_{\#}\setminus V^{\mfB,h}_{\#}$.
Let us construct such basis functions.
Define $\psi_x\in V^h_{\#}$ such that
\begin{eqnarray}\label{eq:psi_x}
 & \text{ its midpoint values on {topologically} vertical edges
   are } \pm 1  \nn\\
& \text{ with alternating sign in both directions and } \\
 & \text{ all the midpoint values on {topologically} horizontal edges are 0}.\nn
\end{eqnarray}  
See \figref{fig:alternating-function} $(a)$ for an illustration for
$\psi_x$. Notice that
$\psi_x$ is well-defined whenever $N_x$ is even.
It is easy to see that $\psi_x \not \in V^{\mfB,h}_{\#}.$
Similarly, we can find another piecewise linear function $\psi_y$ in $V^h_{\#},$
not belonging to $V^{\mfB,h}_{\#}$ (\figref{fig:alternating-function}
$(b)$), such that its midpoint values on {topologically} horizontal edges are $\pm 1$
with alternating sign in both directions and all the midpoint values
on {topologically} vertical edges are 0.
\begin{figure}[!ht]
\centering
\epsfig{figure=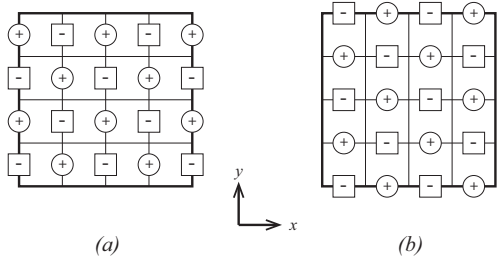,width=0.5\textwidth}
\caption{An example of two alternating functions $(a)$ $\psi_x$ and $(b)$ $\psi_y$. They do not belong to $\mfB$.}
\label{fig:alternating-function}
\end{figure}
Next, let us consider the case where either $N_x$ or $N_y$ is odd.
\proprefs{prop:dim-periodic}{prop:dim-ker} imply that
$\mfB$ is linearly independent and 
$\dim V^{\mfB,h}_{\#} = \dim V^h_{\#}.$
Therefore $V^{\mfB,h}_{\#} = V^h_{\#}$
and $\mfB$, the set of all node--based functions, is a basis for $V^h_{\#}$.
We summarize these results as in following theorem.
\begin{theorem}\label{thm:B}(A basis for $V^h_{\#}$)
\begin{enumerate}
\item If both $N_x$ and $N_y$ are even,
then $V^{\mfB,h}_{\#} \subsetneqq V^h_{\#}$.
Furthermore $\mfA=\{\psi_x, \psi_y\}$ where $\psi_x$
and $\psi_y$ are defined as in \eqref{eq:psi_x}, forms a
complementary basis for $V^h_{\#},$ 
not belonging to $V^{\mfB,h}_{\#}$.
Moreover,
$\mfB^\flat \cup \mfA$ forms a basis for $V^h_{\#},$ where
$\mfB^\flat= \{ \phi_1, \cdots , \phi_{|\mfB| -1}\}$.
\item If either $N_x$ or $N_y$ is odd,
then $V^{\mfB,h}_{\#} = V^h_{\#}$.
Moreover, $\mfB$ is a basis for $V^h_{\#}$.
\end{enumerate}
\end{theorem}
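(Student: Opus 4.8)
The plan is to split into the two parity regimes of the statement; the odd case is essentially a bookkeeping consequence of the dimension counts already in hand, while the even case requires producing the two complementary functions together with two linear functionals that ``see'' them but annihilate every node-based function.

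\emph{The odd case (part 2).} If $N_x$ or $N_y$ is odd then $\eps(N_x)\eps(N_y)=0$, so \propref{prop:dim-ker} gives $\dim V^{\mfB,h}_{\#}=N_xN_y$ and \propref{prop:dim-periodic} gives $\dim V^h_{\#}=N_xN_y$. Since $V^{\mfB,h}_{\#}\subseteq V^h_{\#}$ by definition and the two finite dimensions coincide, the spaces are equal; and $\mfB$ is a basis for $V^{\mfB,h}_{\#}=V^h_{\#}$ by the second assertion of \propref{prop:dim-ker}. Nothing further is needed here.

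\emph{The even case (part 1).} Now $\eps(N_x)\eps(N_y)=1$, so $\dim V^h_{\#}=N_xN_y+1$ while $\dim V^{\mfB,h}_{\#}=N_xN_y-1$ by \proprefs{prop:dim-periodic}{prop:dim-ker}; in particular $V^{\mfB,h}_{\#}\subsetneqq V^h_{\#}$ and any complement has dimension $2$. First I would verify that $\psi_x,\psi_y$, defined as in \eqref{eq:psi_x}, actually lie in $V^h_{\#}$: because $N_x$ (resp.\ $N_y$) is even, the prescribed $\pm1$ alternating assignment on the topologically vertical (resp.\ horizontal) edges is consistent with the periodic identification of opposite boundary edges, so each assignment determines a well-defined element of $V^h$ satisfying the periodic condition; and the dice rule \eqref{eq:dice-rule} holds on every element $K$ since within a single cell the two topologically vertical edges carry opposite signs (they differ by one step of the $x$-alternation) while the two horizontal edges both carry $0$, so both opposite-pair sums vanish — and symmetrically for $\psi_y$. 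Next, following the idea sketched around \eqref{eq:psi_x}, I would introduce the linear functional $J^h_x:V^h_{\#}\to\BbbR$ sending $v_h$ to the sum of its midpoint values over all topologically vertical edges weighted by the same alternating signs as $\psi_x$ (well-defined on $V^h_{\#}$ precisely because both $N_x,N_y$ are even), and $J^h_y$ the analogue for horizontal edges. The key observation is that $J^h_x$ annihilates every node-based function $\phi_z$: a node $z$ lies on exactly two topologically vertical edges, the one above and the one below $z$, and these differ by one step of the $y$-alternation, hence carry opposite signs, so the two $\tfrac12$ contributions cancel; thus $J^h_x$ vanishes on $\Span\mfB=V^{\mfB,h}_{\#}$, and likewise $J^h_y$. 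On the other hand $J^h_x(\psi_x)$ is a sum of $+1$'s, hence a positive number, while $J^h_x(\psi_y)=0$ since all of $\psi_y$'s vertical-edge values are $0$; symmetrically $J^h_y(\psi_y)>0$ and $J^h_y(\psi_x)=0$. Hence $\psi_x,\psi_y\notin V^{\mfB,h}_{\#}$, and if $v+a\psi_x+b\psi_y=0$ with $v\in V^{\mfB,h}_{\#}$, applying $J^h_x$ and $J^h_y$ forces $a=b=0$ and then $v=0$; since $\mfB^\flat=\{\phi_1,\dots,\phi_{|\mfB|-1}\}$ is a basis of $V^{\mfB,h}_{\#}$ by \propref{prop:dim-ker}, the set $\mfB^\flat\cup\mfA$ is linearly independent. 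It has $(|\mfB|-1)+2=N_xN_y+1=\dim V^h_{\#}$ elements, so it is a basis of $V^h_{\#}$, and $\mfA=\{\psi_x,\psi_y\}$ is the asserted complementary basis (the complement has dimension $2=|\mfA|$).

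\emph{Main obstacle.} The only genuine work is in the even case, and it is of a combinatorial rather than analytic nature: setting up $\psi_x,\psi_y$ so that they truly satisfy the dice rule and the periodic identification, and setting up $J^h_x,J^h_y$ so that they are well-defined on $V^h_{\#}$ and kill all node-based functions. Both points hinge on the evenness of $N_x$ and $N_y$ in exactly the way the parity dichotomy of \proprefs{prop:dim-periodic}{prop:dim-ker} already exposed (and Figures~\ref{fig:dice-rule-same-orientation} and~\ref{fig:alternating-function} make the sign patterns concrete). Once these bookkeeping facts are established, linear independence via $J^h_x,J^h_y$ and the dimension count close the argument mechanically.
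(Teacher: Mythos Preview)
Your proposal is correct and follows essentially the same route as the paper: the odd case is dispatched by the dimension match from \proprefs{prop:dim-periodic}{prop:dim-ker}, and in the even case the paper likewise constructs $\psi_x,\psi_y$ and (in text surrounding \eqref{eq:psi_x}) invokes exactly the functional $J^h_x$ you use to see $\psi_x\notin V^{\mfB,h}_{\#}$. Your write-up is in fact more explicit than the paper's, since you pair the two functionals $J^h_x,J^h_y$ against $\psi_x,\psi_y$ to get a nonsingular $2\times2$ system, which cleanly yields linear independence of $\mfA$ modulo $V^{\mfB,h}_{\#}$ rather than just the individual non-membership the paper states.
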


\begin{remark}
  Notice that the elementwise derivatives $\frac{\p\psi_x}{\p x}$
  and $\frac{\p\psi_y}{\p y}$ are checkerboard patterns, while
   $\frac{\p\psi_x}{\p y}= \frac{\p\psi_y}{\p x}=0.$
\end{remark}

\subsection{Stiffness matrix associated with $\mfB$}\label{sec:stiffness-B}
Even though $\mfB$ may not be a basis for $V^h_{\#}$,
it is still a useful set of functions to understand $V^h_{\#}$.
Above all, the node--based functions are easy to handle in implementation viewpoint.
Furthermore, \thmref{thm:B} implies
$V^{\mfB,h}_{\#},$ which equals to $\Span(\mfB)$, occupies almost all of $V^h_{\#}.$
In this section,
we investigate some characteristics of $\mfB$ in approximating the Laplace operator.

\begin{figure}[!ht]
\centering
\epsfig{figure=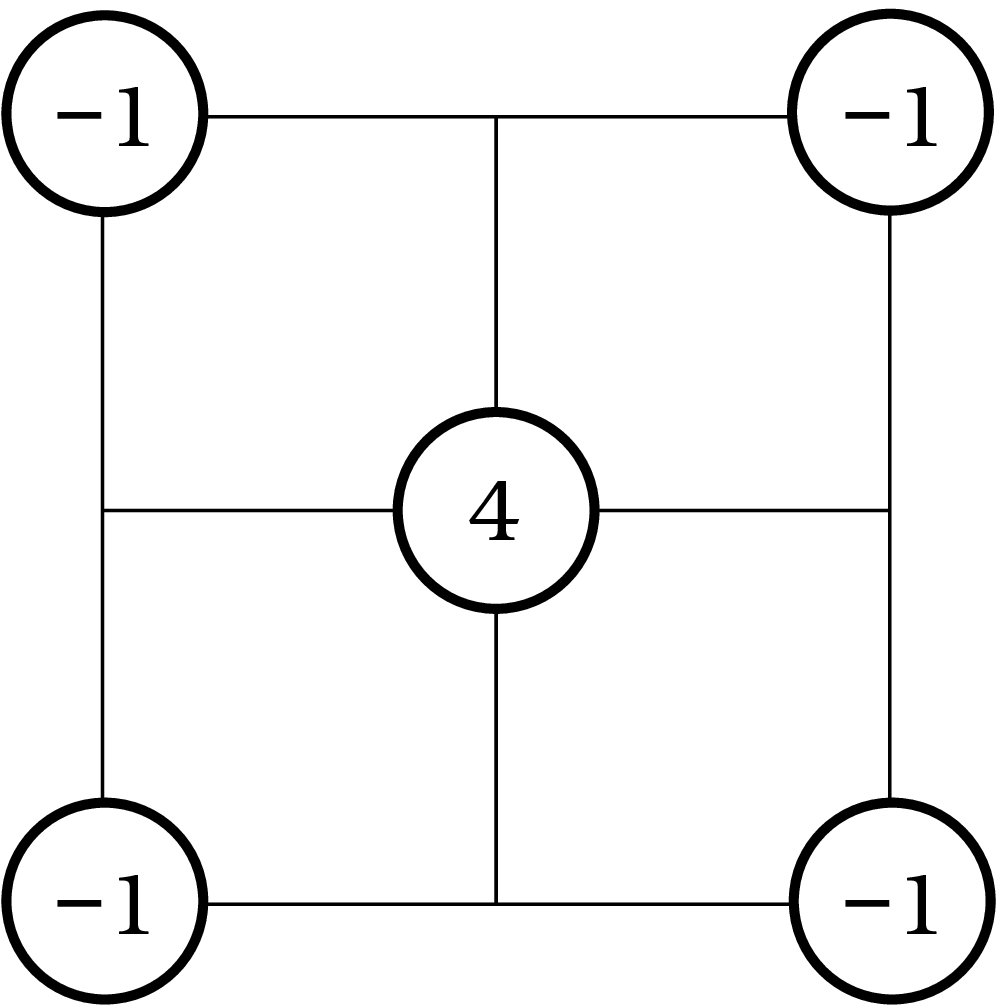,width=0.18\textwidth}
\caption{The stencil for $\bS^{\mfB}_h$ {with uniform cubes of size $h\times
  h$.}}
\label{fig:stencil-stiffness-matrix}
\end{figure}

Set $\bS^{\mfB}_h$ be the $|\mfB|\times|\mfB|$ stiffness matrix
associated with $\mfB = \{ \phi_j \}_{j=1}^{|\mfB|},$ whose components
are given by
\begin{align}\label{eq:def-stiffness-B}
(\bS^{\mfB}_h)_{jk}
=
	\sum_{K \in \Tau_h} \int_K \nabla \phi_k \cdot \nabla \phi_j \quad 1 \le j,k \le |\mfB|.
\end{align}
The local stencil for the stiffness matrix associated with $\mfB$ is shown in \figref{fig:stencil-stiffness-matrix}.
Obviously, $\bS^{\mfB}_h$ is symmetric and positive semi-definite.
The following lemma and proposition are immediate, but useful for
later uses.

\begin{lemma}\label{lem:ker-iff-constant}
Let $v_h = \sum_j v_j \phi_j$ for $\mathbf{v} = (v_j) \in \BbbR^{|\mfB|}$.
Then
$\mathbf{v} \in \ker \bS^{\mfB}_h$ if and only if
$v_h$ is a constant function in $\O$.
\end{lemma}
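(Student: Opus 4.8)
The plan is to identify $\mathbf{v}\in\ker\bS^{\mfB}_h$ with the elementwise-gradient energy of $v_h=\sum_j v_j\phi_j$ vanishing, and then trace what a vanishing energy forces on $v_h$ both locally and globally. First I would note that since $\bS^{\mfB}_h$ is symmetric positive semi-definite (already observed above), $\mathbf{v}\in\ker\bS^{\mfB}_h$ is equivalent to $\mathbf{v}^{T}\bS^{\mfB}_h\mathbf{v}=0$, which by \eqref{eq:def-stiffness-B} reads $\sum_{K\in\Tau_h}\int_K|\nabla v_h|^2=0$, i.e. $\nabla v_h=0$ on every element $K$. Hence $v_h$ is piecewise constant: on each $K$ the linear polynomial $v_h|_K$ equals a constant $c_K$.

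The next step is to upgrade "piecewise constant" to "globally constant" using the nonconforming (integral-continuity) structure of $V^h$ together with the periodic identification built into $\mfB$. Since $v_h\in V^{\mfB,h}_{\#}\subset V^h$, across any interior edge $f$ shared by $K^+$ and $K^-$ the face averages agree: $\sig^{(i)}_f(v_h|_{K^+})=\sig^{(i)}_f(v_h|_{K^-})$. As $v_h|_{K^\pm}$ are constants, this gives $c_{K^+}=c_{K^-}$ for every pair of elements sharing an interior edge. Because $\Tau_h$ is (topologically) a connected $N_x\times N_y$ mesh, the element-adjacency graph is connected, so all $c_K$ are equal to a single constant $c$; thus $v_h\equiv c$ on $\O$. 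Note the periodic boundary conditions are automatically consistent with a constant, so no obstruction arises there. The converse is immediate: if $v_h\equiv c$ then $\nabla v_h=0$ on each $K$, so $\mathbf{v}^T\bS^{\mfB}_h\mathbf{v}=0$, and positive semi-definiteness forces $\bS^{\mfB}_h\mathbf{v}=0$.

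One point deserving a remark, and the only mild subtlety, is the relationship between the vector $\mathbf{v}$ and the function $v_h$: the map $B^{\mfB}_h:\mathbf{v}\mapsto v_h$ may have a one-dimensional kernel (\propref{prop:dim-ker}) when both $N_x$ and $N_y$ are even, so "$v_h$ is constant" is a statement about the function, and the set of such $\mathbf{v}$ is then the affine-dimension-one preimage of the constants under $B^{\mfB}_h$ — but the biconditional as stated ("$\mathbf v\in\ker\bS^{\mfB}_h$ iff $v_h$ is constant") is unaffected, since $\bS^{\mfB}_h$ only sees $v_h$ through its gradient. I expect the adjacency/connectedness argument to be the only place needing care, and it is routine given the topological equivalence of $\Tau_h$ to the uniform mesh $\widetilde\Tau_h$; everything else is a one-line consequence of positive semi-definiteness.
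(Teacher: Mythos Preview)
Your proof is correct and is exactly the standard argument one expects here; the paper itself gives no proof, declaring the lemma ``immediate, but useful for later uses.'' Your use of positive semi-definiteness to reduce $\mathbf{v}\in\ker\bS^{\mfB}_h$ to $|v_h|_{1,h}=0$, followed by the interior-edge integral continuity and mesh connectedness to pass from piecewise constant to globally constant, is precisely the intended reasoning, and your remark on the non-injectivity of $B^{\mfB}_h$ is a helpful clarification that the paper leaves implicit.
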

\begin{proposition}\label{prop:ker-decomposition}
$\ker \bS^{\mfB}_h$ can be decomposed as
\begin{align}
\ker \bS^{\mfB}_h = \ker B^{\mfB}_h \oplus \Span \mathbf{1}_{\mfB}.
\end{align}
\end{proposition}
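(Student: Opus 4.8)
The plan is to prove the two inclusions $\ker B^{\mfB}_h \oplus \Span \mathbf{1}_{\mfB} \subseteq \ker \bS^{\mfB}_h$ and $\ker \bS^{\mfB}_h \subseteq \ker B^{\mfB}_h \oplus \Span \mathbf{1}_{\mfB}$, and separately to verify that the sum on the right is indeed direct. The key tool is \lemref{lem:ker-iff-constant}, which identifies $\ker \bS^{\mfB}_h$ with those coefficient vectors $\mathbf{v}$ for which $v_h = \sum_j v_j \phi_j$ is (globally) constant on $\O$.

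For the easy inclusion, first note that $\mathbf{1}_{\mfB} \in \ker \bS^{\mfB}_h$: the corresponding function is $\sum_j \phi_j$, which by the midpoint-value normalization of the node-based functions (each $\phi_z$ takes value $\tfrac12$ at the midpoint of every edge through $z$, and each edge has exactly two endpoints) has midpoint value $1$ at every edge, hence equals the constant $1$; now apply \lemref{lem:ker-iff-constant}. Next, $\ker B^{\mfB}_h \subseteq \ker \bS^{\mfB}_h$: if $\mathbf{c} \in \ker B^{\mfB}_h$ then $B^{\mfB}_h(\mathbf{c}) = \sum_j c_j \phi_{z_j} = 0$, which is the (zero) constant function, so again \lemref{lem:ker-iff-constant} gives $\mathbf{c} \in \ker \bS^{\mfB}_h$. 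Since $\ker \bS^{\mfB}_h$ is a subspace, it contains the sum $\ker B^{\mfB}_h + \Span \mathbf{1}_{\mfB}$.

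For directness of the sum, observe that $\mathbf{1}_{\mfB} \notin \ker B^{\mfB}_h$: indeed $B^{\mfB}_h(\mathbf{1}_{\mfB}) = \sum_j \phi_{z_j}$ is the constant function $1 \neq 0$ as just computed. Hence $\Span \mathbf{1}_{\mfB} \cap \ker B^{\mfB}_h = \{0\}$ and the sum is direct, of dimension $\dim \ker B^{\mfB}_h + 1 = \eps(N_x)\eps(N_y) + 1$ by \propref{prop:dim-ker}.

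It remains to prove the reverse inclusion, which I expect to be the main (though still modest) obstacle. Let $\mathbf{v} \in \ker \bS^{\mfB}_h$, so $v_h = B^{\mfB}_h(\mathbf{v}) = \sum_j v_j \phi_j$ is a constant $\alpha$ on $\O$ by \lemref{lem:ker-iff-constant}. Then $v_h - \alpha \cdot 1 = 0$; writing $1 = B^{\mfB}_h(\mathbf{1}_{\mfB})$ gives $B^{\mfB}_h(\mathbf{v} - \alpha \mathbf{1}_{\mfB}) = 0$, i.e.\ $\mathbf{v} - \alpha \mathbf{1}_{\mfB} \in \ker B^{\mfB}_h$. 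Therefore $\mathbf{v} = (\mathbf{v} - \alpha \mathbf{1}_{\mfB}) + \alpha \mathbf{1}_{\mfB} \in \ker B^{\mfB}_h + \Span \mathbf{1}_{\mfB}$, completing the inclusion and hence the proof. The only point needing care is the consistent bookkeeping of midpoint values of $\sum_j \phi_{z_j}$ under the periodic identification of boundary nodes, but since each $(d-1)$-face still has exactly two identified endpoints among $\mathcal{N}^{\#}_h$, the count $\tfrac12 + \tfrac12 = 1$ is unaffected.
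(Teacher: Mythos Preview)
Your proof is correct and is precisely the argument the paper has in mind: the paper states that \lemref{lem:ker-iff-constant} and \propref{prop:ker-decomposition} are ``immediate,'' and your write-up simply unpacks that immediacy via the partition-of-unity identity $\sum_j \phi_j \equiv 1$ together with \lemref{lem:ker-iff-constant}. There is nothing to add.
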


\begin{remark}
\lemref{lem:ker-iff-constant} and
\propref{prop:ker-decomposition} are also valid in the 3D case. 
\end{remark}
Observe that
\proprefs{prop:dim-ker}{prop:ker-decomposition} directly lead to the following proposition.

\begin{proposition}\label{prop:dim-ker-stiffness}(The dimension of $\ker \bS^{\mfB}_h$)
\begin{align}
\dim \ker \bS^{\mfB}_h
= \eps(N_x)\eps(N_y) + 1.
\end{align}
\end{proposition}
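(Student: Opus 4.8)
The plan is to simply combine the two cited results, which is why the authors flag it as "immediate." By \propref{prop:ker-decomposition} we have the direct sum decomposition $\ker \bS^{\mfB}_h = \ker B^{\mfB}_h \oplus \Span \mathbf{1}_{\mfB}$, so that $\dim \ker \bS^{\mfB}_h = \dim \ker B^{\mfB}_h + \dim \Span \mathbf{1}_{\mfB}$. The first summand is handled by \propref{prop:dim-ker}, which gives $\dim \ker B^{\mfB}_h = \eps(N_x)\eps(N_y)$. The second summand is $1$, since $\mathbf{1}_{\mfB}$ is a single nonzero vector. Adding these yields $\dim \ker \bS^{\mfB}_h = \eps(N_x)\eps(N_y) + 1$, as claimed.

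The only point that needs a moment's care — and presumably the reason \propref{prop:ker-decomposition} is phrased as a direct sum rather than merely a sum — is that the two subspaces genuinely intersect trivially, so that the dimensions add without overlap. This is already built into the statement of \propref{prop:ker-decomposition} (the symbol $\oplus$), so I would not re-derive it; but for completeness one notes that any $\mathbf{c} \in \ker B^{\mfB}_h$ satisfies the sign-alternating relation \eqref{eq:chain-rel} along every edge, forcing its entries to take opposite signs across adjacent nodes, whereas a nonzero multiple of $\mathbf{1}_{\mfB}$ has all entries equal; these are incompatible unless the vector is zero. Hence the intersection is $\{0\}$ and the dimension count is exact.

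I expect no real obstacle here: the proof is a one-line dimension count on a direct sum, with both pieces already established earlier in the paper. The "hard work" was done in \propref{prop:dim-ker} (the parity analysis of $\ker B^{\mfB}_h$) and in \propref{prop:ker-decomposition} (via \lemref{lem:ker-iff-constant}, identifying $\ker \bS^{\mfB}_h$ with constants on $\O$); this proposition merely records the arithmetic consequence $\eps(N_x)\eps(N_y) + 1$.
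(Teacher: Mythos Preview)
Your proposal is correct and matches the paper's approach exactly: the paper states that \propref{prop:dim-ker} and \propref{prop:ker-decomposition} directly lead to the proposition, and you have spelled out precisely this dimension count on the direct sum. Your additional remark on why the intersection is trivial is a nice clarification but is not needed, since the $\oplus$ in \propref{prop:ker-decomposition} already records it.
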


\section{Numerical Schemes for Elliptic Problems with Periodic
  BC}\label{sec:numerical-scheme}
Assume that $f\in L^2(\O)$ is given such that $\int_\O f = 0.$
Consider the elliptic problem with periodic BC to find $u\in
H^1_{\#}(\O)/{\mathbb R}$ such that
$- \Delta u = f \ \text{in } \O.$
The weak formulation is as follows:
{\it find $u \in H^1_{\#}(\O)/{\mathbb R}$ such that}
\begin{equation}\label{eq:weak-formulation-periodic}
\int_\O \nabla u \cdot \nabla v 
= \int_\O f v\q \forany v \in H^1_{\#}(\O)/{\mathbb R}.
\end{equation}
By defining $a_h (u_h, v_h) := \sum_{K\in\Tau_h} \int_K \nabla u_h \cdot \nabla v_h $,
the discrete weak formulation for \eqref{eq:weak-formulation-periodic} is given as follows:
{\it find $u_h \in V^h_{\#}/{\mathbb R}$ such that}
\begin{equation}\label{eq:discrete-weak-formulation-periodic}
a_h (u_h, v_h) = \int_\O f v_h \q \forany v_h \in V^h_{\#}/{\mathbb R}.
\end{equation}

\begin{remark}
Throughout this section,
we assume that both $N_x$ and $N_y$ are even.
The other cases with odd $N_x$ and/or $N_y$ are easy to handle
owing to $V^h_{\#}=V^{\mfB,h}_{\#}$.
Also we will assume that $(\Tau_h)_{0<h}$ is a family of uniform rectangular
decomposition.
\end{remark}

\subsection*{Additional Notations \& Properties}
We compare 4 different numerical approaches to solve
\eqref{eq:discrete-weak-formulation-periodic} with the trial and test
function spaces $\mcS=\mfB^\flat, \mfB, \mfE^\flat, \mfE,$ which are
described as follows.
Due to \propref{prop:dim-ker},
we can find $\mfB^\flat$, a proper subset of $\mfB$,
which is a basis for $V^{\mfB,h}_{\#}$.
It clearly holds that $|\mfB^\flat| = \dim V^{\mfB,h}_{\#} = |\mfB| -1$.
Consider the two extended sets
$\mfE := \mfB \cup \mfA$ and
$\mfE^\flat := \mfB^\flat \cup \mfA,$
the latter of which is a basis for $V^h_{\#}$.
The characteristics of $\mfB^\flat$, $\mfB$, $\mfE^\flat$, and $\mfE$
are summarized in \tabref{tab:summary-characteristic}.
\begin{table}[!htb]
  \centering
  \begin{tabular}{ l | l | c | l }
    \hline \hline
    \multicolumn{1}{c|}{$\mcS$} & \multicolumn{1}{c|}{$|\mcS|$} & \multicolumn{1}{c|}{$\Span \mcS$} & \multicolumn{1}{c}{$\dim \Span \mcS$} \\ \hline \hline
    $\mfB^\flat$ & $N_x N_y -1$ & \multirow{2}{*}{$V^{\mfB,h}_{\#}$} & \multirow{2}{*}{$N_x N_y -1$} \\
    $\mfB$ & $N_x N_y$ & & \\ \hline
    $\mfE^\flat$ & $N_x N_y +1$ & \multirow{2}{*}{$V^h_{\#}$} & \multirow{2}{*}{$N_x N_y +1$} \\
    $\mfE$ & $N_x N_y +2$ & & \\ \hline \hline
  \end{tabular}
  \caption{Characteristics of each test and trial function set $\mcS$ when both $N_x$, $N_y$ are even}
  \label{tab:summary-characteristic}
\end{table}
For a vector $\mathbf{v}$ with $|\mfE|$ (or $|\mfE^\flat|$) number of components,
let $\mathbf{v}|_{\mfB}$ (or $\mathbf{v}|_{\mfB^\flat}$)  and $\mathbf{v}|_{\mfA}$ denote
vectors consisting of the first $|\mfB|$ (or $|\mfB^\flat|$)
components, and of the last $|\mfA|$ components, respectively.
Several properties of functions in $\mfB$ and $\mfA$ can be observed.
\begin{lemma}\label{lem:integral-property-B-A}
Let $\mfB$ and $\mfA$ be as above. Then the followings hold.
\begin{enumerate}
\item $a_h (\phi, \psi) = 0 \forany \phi \in \mfB \forany \psi \in \mfA$.
\item $a_h (\psi_\mu, \psi_\nu) = 0 \forany \psi_\mu, \psi_\nu \in \mfA$ such that $\mu \not = \nu$.
\item $\int_\O \psi = 0 \forany \psi \in \mfA$.
\item There exists an $h$-independent constant $C$ such that
$\| \psi \|_0 \le C$ and $| \psi |_{1,h} \le Ch^{-1} \forany \psi \in \mfA$.
\end{enumerate}
\end{lemma}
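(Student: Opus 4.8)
The plan is to verify each of the four items by reducing everything to elementwise computations on the uniform $h\times h$ cells, using the explicit midpoint-value descriptions of $\phi\in\mfB$ and $\psi_x,\psi_y\in\mfA$ together with the structure observed in the Remark after \thmref{thm:B}, namely that $\frac{\p\psi_x}{\p y}=\frac{\p\psi_y}{\p x}=0$ while $\frac{\p\psi_x}{\p x}$ and $\frac{\p\psi_y}{\p y}$ are checkerboard (alternating $\pm$ constant) patterns over the cells. For item~(1), on each element $K$ the gradient $\nabla\phi$ is constant, and $\int_K\nabla\phi\cdot\nabla\psi_x = (\p_x\phi)(\p_x\psi_x)|K|$; since $\p_x\psi_x$ is $+c$ on half the cells and $-c$ on the other half in a checkerboard arrangement, I would pair up horizontally-adjacent cells sharing an edge through which $\phi$'s midpoint value is the link, and show the contributions cancel. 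Cleaner: write $a_h(\phi,\psi_x)=\sum_K (\p_x\phi|_K)(\p_x\psi_x|_K)h^2$ and note that for a fixed topological row of cells the factor $\p_x\psi_x$ alternates sign cell-to-cell while the sum telescopes against the midpoint values of $\phi$ on the vertical edges; because $\phi=\phi_z$ has midpoint value $\frac12$ only on the (at most four) faces meeting $z$ and $0$ elsewhere, and the periodic identification makes the alternating sum close up (this is exactly the mechanism of \lemref{lem:relation-boundary-dof} and \propref{prop:dim-ker}, available since $N_x,N_y$ are even), the total is zero. The same argument in the $y$-variable gives $a_h(\phi,\psi_y)=0$, and since $a_h(\phi,\psi)$ for general $\psi\in\mfA=\Span\{\psi_x,\psi_y\}$ is a linear combination, item~(1) follows.

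Item~(2) is the orthogonality $a_h(\psi_x,\psi_y)=0$: on each cell $\nabla\psi_x\cdot\nabla\psi_y=(\p_x\psi_x)(\p_x\psi_y)+(\p_y\psi_x)(\p_y\psi_y)$, and by the Remark both cross terms vanish identically ($\p_x\psi_y=0$ and $\p_y\psi_x=0$), so the integrand is zero on every element. Item~(3), $\int_\O\psi=0$ for $\psi\in\mfA$, follows because on each $h\times h$ cell the integral of a linear function equals $h^2$ times its value at the cell center, which for a $P_1$--nonconforming function equals the average of the two opposite-edge midpoint pairs; for $\psi_x$ the vertical-edge midpoint values on a cell are $+1$ and $-1$ (alternating in the topological $x$-direction) while the horizontal-edge values are $0$, so the cell-center value is $0$ on every cell, giving $\int_K\psi_x=0$ and hence $\int_\O\psi_x=0$; likewise for $\psi_y$, and then for all of $\mfA$ by linearity.

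For item~(4), the $L^2$ bound $\|\psi\|_0\le C$ comes from summing $\|\psi\|_{0,K}^2$ over the $N_xN_y$ cells: on the reference cell of size $h\times h$ a $P_1$ function with midpoint values bounded by $1$ has $L^2$-norm squared $\le C h^2$ by equivalence of norms on the finite-dimensional local space (and scaling), so $\|\psi\|_0^2\le C h^2\cdot N_xN_y\le C\ell_1\ell_2$, an $h$-independent constant; the seminorm bound is the same scaling argument one derivative up, $|\psi|_{1,h,K}^2=|\nabla\psi|_K|^2 h^2\le C$ (the constant gradient has size $O(h^{-1})$ since midpoint values are $O(1)$ across an $O(h)$ cell), so $|\psi|_{1,h}^2\le C N_xN_y = C h^{-2}\ell_1\ell_2$, i.e. $|\psi|_{1,h}\le Ch^{-1}$. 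I expect the only real care to be needed in item~(1): making precise why the alternating sum of $\phi_z$-midpoint contributions, weighted by the checkerboard pattern of $\p\psi_x/\p x$, closes up under the periodic identification — but this is precisely the even-parity cancellation already established in \lemref{lem:relation-boundary-dof} and used in \propref{prop:dim-ker}, so it can be invoked rather than re-proved. Alternatively, item~(1) can be obtained abstractly: $\psi_x$ is constant in the topological $y$-direction and its $x$-derivative pattern is $L^2$-orthogonal (cell by cell, via the telescoping/periodicity) to $\p_x\phi$ for every node-based $\phi$; I would present whichever of the two is shorter.
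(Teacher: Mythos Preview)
The paper does not give a proof of this lemma; it is stated as something that ``can be observed'' and then used freely, so there is no paper proof to compare against. Your elementwise arguments for items~(2)--(4) are correct and are exactly the kind of direct computation one would expect: the vanishing of the cross-derivatives in the Remark after \thmref{thm:B} kills item~(2) pointwise, the cell-center value of $\psi_x$ (and $\psi_y$) is zero on every cell which gives item~(3), and the scaling argument for item~(4) is standard.

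For item~(1) your reasoning is correct but slightly over-engineered. Because each node-based function $\phi_z\in\mfB$ has support on only the four cells meeting $z$ (with periodic wrap-around for boundary nodes), the computation is purely local: on that $2\times2$ block, $\partial_x\phi_z$ is constant along the topological $y$-direction while $\partial_x\psi_x$ alternates in both directions, so the four contributions cancel in pairs (top row against bottom row). No appeal to the global alternating-sum relation of \lemref{lem:relation-boundary-dof} or to \propref{prop:dim-ker} is needed. Your telescoping/periodicity formulation would also work, but the local-support observation makes the proof a two-line calculation rather than a global cancellation argument.
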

Now, we define a stiffness matrix associated with $\mfB^\flat$, and its variants.
Let $\bS^{\mfB^\flat}_h$ be the $|\mfB^\flat|\times |\mfB^\flat|$ stiffness matrix
associated with $\mfB^\flat$ whose components are given by
\begin{align}\label{eq:def-stiffness-B-flat}
(\bS^{\mfB^\flat}_h)_{jk}
:=
	a_h (\phi_k, \phi_j) \quad 1 \le j,k \le |\mfB^\flat|,
\end{align}
and $\mathbf{\tilde{S}}^{\mfB^\flat}_h$ be the same matrix as $\bS^{\mfB^\flat}_h$,
but the last row is modified in order to impose the zero mean value condition.
Because all the integrals $\int_\O \phi_j$ are same for all $\phi_j$ in $\mfB$,
every entry in the last row is replaced by $1$.
\begin{align}\label{eq:def-stiffness-tilde-B-flat}
(\mathbf{\tilde{S}}^{\mfB^\flat}_h)_{jk}
:=
	\begin{cases}
	a_h (\phi_k, \phi_j) & j \not = |\mfB^\flat|, \\
	1 & j = |\mfB^\flat|.
	\end{cases}
\end{align}
Note that
$\mathbf{\tilde{S}}^{\mfB^\flat}_h$ is nonsingular
whereas
both $\bS^{\mfB}_h$ and $\bS^{\mfB^\flat}_h$ are singular
with rank deficiency $2$ and $1$, respectively.
For the complementary part,
let $\bS^{\mfA}_h$ be the $|\mfA|\times|\mfA|$ stiffness matrix
associated with $\mfA$,
\begin{align}\label{eq:def-stiffness-A}
(\bS^{\mfA}_h)_{jk}
:=
	a_h (\psi_k, \psi_j) \quad 1 \le j,k \le |\mfA|.
\end{align}
Notice that $\bS^{\mfA}_h$ is a nonsingular diagonal matrix
due to \lemref{lem:integral-property-B-A}.

\subsection{Option 1: $\mcS = \mfE^\flat$ for a nonsingular nonsymmetric system}\label{sec:opt-1}
Since $\mfE^\flat$ is a basis for $V^h_{\#}$,
$\mfE^\flat$ is a natural choice as a set of trial and test functions to assemble a linear system
corresponding to \eqref{eq:discrete-weak-formulation-periodic}.
The numerical solution $u_h \in V^h_{\#}$ is uniquely expressed, associated with $\mfE^\flat$, as
\begin{align}\label{eq:sol-scheme-1}
u_h = \mathbf{\tilde{u}^\flat} \mfE^\flat
\end{align}
where $\mathbf{\tilde{u}^\flat}$ is the solution of 
the following system of equations associated with $\mfE^\flat$:
\begin{equation}\label{eq:scheme-1}
\mathcal{\tilde{L}}^{\mfE^\flat}_h \mathbf{\tilde{u}^\flat}
=
\begin{bmatrix}
\mathbf{\tilde{f}}_{\mfB^\flat} \\ \mathbf{f}_\mfA
\end{bmatrix}
\end{equation}
where
$
\mathcal{\tilde{L}}^{\mfE^\flat}_h
:=
\begin{bmatrix}
\mathbf{\tilde{S}}^{\mfB^\flat}_h & \mathbf{0} \\
\mathbf{0} & \bS^{\mfA}_h
\end{bmatrix},$
$(\mathbf{\tilde{f}}_{\mfB^\flat})_j
=
	\begin{cases}
	\int_\O f \phi_j, & 1\le j < |\mfB^\flat| \\
	0, & j = |\mfB^\flat|
	\end{cases},$ and $\mathbf{f}_\mfA=\int_\O f \mfA.
$
Due to \lemref{lem:integral-property-B-A},
$\mathcal{\tilde{L}}^{\mfE^\flat}_h$ is a block-diagonal
matrix. Moreover, it is nonsingular, but nonsymmetric
due to the modification in the last row of $\mathbf{\tilde S}^{\mfB^\flat}$
which comes from the zero mean value condition.
We can use any known numerical scheme for general linear systems, for instance GMRES,
to solve \eqref{eq:scheme-1}.
\begin{scheme}
  \noindent{\bf GMRES for $\mcS = \mfE^\flat$}
  \\
{\bf Step 1.} Take an initial vector $\bu^{(0)} \in \BbbR^{|\mfE^\flat|}.$
\\
{\bf Step 2.} Solve the nonsymmetric system \eqref{eq:scheme-1} by a
restarted GMRES
and set $\mathbf{\tilde{u}^\flat} := \bu^{(n)}$.
\\
{\bf Step 3.} The numerical solution is obtained as $u_h =
\mathbf{\tilde{u}^\flat} \mfE^\flat$.
\end{scheme}

\subsection{Option 2: $\mcS = \mfE^\flat$ for a symmetric positive semi-definite system with rank deficiency 1}\label{sec:opt-2}
In the previous approach,
the zero mean value condition is imposed in a system of equations directly.
Consequently
the associated linear system becomes nonsymmetric
due to modification of just a single row.

In this subsection we will impose the zero mean value condition indirectly
  in order to conserve symmetry of the assembled linear system.
In particular, we will impose the zero mean value condition
in post-processing stage.
  Then we can apply some fast solvers for symmetric system.
  On the other hand, nonsingularity can not be avoided any longer in this approach.
Fortunately the linear system is at least positive semi-definite.
Hence we can use a Drazin inverse as mentioned in \secref{sec:drazin-inverse}
to solve our singular system
using a Krylov iterative method under a proper condition.

Consider a system of equations for \eqref{eq:discrete-weak-formulation-periodic}
associated with $\mfE^\flat$ without any modification:
\begin{align}\label{eq:scheme-2}
\mathcal{L}^{\mfE^\flat}_h \mathbf{u^\flat}
=
\int_\O f \mfE^\flat
\quad
\text{where}
\quad
\mathcal{L}^{\mfE^\flat}_h
:=
\begin{bmatrix}
\bS^{\mfB^\flat}_h & \mathbf{0} \\
\mathbf{0} & \bS^{\mfA}_h
\end{bmatrix}.
\end{align}
Note that the above linear system is singular, and symmetric positive semi-definite.
We should find the solution $\mathbf{u^\flat}$ of the system such that
\begin{align}\label{eq:scheme-2-zero mean value}
\mathbf{u^\flat}|_{\mfB^\flat} \cdot \mathbf{1}_{\mfB^\flat} = 0
\end{align}
since
$\int_\O \mathbf{v} \mfE^\flat = 0$
if and only if
$\mathbf{v}|_{\mfB^\flat} \cdot \mathbf{1}_{\mfB^\flat} = 0$,
and the numerical solution $u^\flat_h \in V^h_{\#}$ of this scheme is obtained by
\begin{align}\label{eq:sol-scheme-2}
u^\flat_h = \mathbf{u^\flat} \mfE^\flat.
\end{align}
If a symmetric positive semi-definite system $Ax=b$ is given, 
as our formulation above,
the conjugate gradient method (CG) gives a unique Krylov solution
if the consistency condition $b\in\Im A$ holds.
The general solution is obviously obtained upto its kernel.
The kernel of the linear system \eqref{eq:scheme-2}
is closely related with the kernel of $\bS^{\mfB^\flat}_h$.
A simple analog of \secref{sec:stiffness-B} implies that
the dimension of $\ker \bS^{\mfB^\flat}_h$ is $1$, and
$\mathbf{v} \in \ker \bS^{\mfB^\flat}_h$ if and only if
$\mathbf{v} \mfB^\flat$ is a constant function in $\O$.
Note that $\mfB^\flat$ is not a partition of unity, whereas $\mfB$ is.
Let $\mathbf{w}_{\mfB^\flat}$ denote a unique vector in $\BbbR^{|\mfB^\flat|}$
such that $\mathbf{w}_{\mfB^\flat}\mfB^\flat \equiv 1$ in $\O$.
Then the kernel of the linear system in \eqref{eq:scheme-2} is simply represented by
$\Span \mathbf{w}_{\mfE^\flat}$
where
$$
\mathbf{w}_{\mfE^\flat}
=
\begin{bmatrix}
\mathbf{w}_{\mfB^\flat}^T & \mathbf{0}
\end{bmatrix}
^T
\in
\BbbR^{|\mfE^\flat|}
$$
is the trivial extension of $\mathbf{w}_{\mfB^\flat}.$
Therefore in the post-processing stage
we add a multiple of $\mathbf{w}_{\mfE^\flat}$ to the Krylov solution
to preserve \eqref{eq:scheme-2-zero mean value}.


In summary, the numerical scheme for $u^\flat_h$ is given as follows.
\begin{scheme}
\noindent{\bf CG for $\mcS = \mfE^\flat$ of rank 1 deficiency}
  \\
{\bf Step 1.} Take a vector $\bu^{(0)} \in
\BbbR^{|\mfE^\flat|}$ for an initial guess.
\\
{\bf Step 2.} Solve the singular symmetric positive semi-definite
system \eqref{eq:scheme-2} by the CG and get the
Krylov solution $\mathbf{u'} = \bu^{(n)}$.
\\
{\bf Step 3.} Add a multiple of $\mathbf{w}_{\mfE^\flat}$ to $\mathbf{u'}$ to get $\mathbf{u^\flat}$, in order to enforce \eqref{eq:scheme-2-zero mean value}, as
\begin{align*}
\mathbf{u^\flat} = \mathbf{u'} - \frac{\mathbf{u'}|_{\mfB^\flat} \cdot \mathbf{1}_{\mfB^\flat}}{\mathbf{w}_{\mfB^\flat} \cdot \mathbf{1}_{\mfB^\flat}} \mathbf{w}_{\mfE^\flat}.
\end{align*}
{\bf Step 4.} The numerical solution is obtained as $u^\flat_h = \mathbf{u^\flat} \mfE^\flat$.
\end{scheme}

\subsection{Option 3: $\mcS = \mfE$ for a symmetric positive semi-definite system with rank deficiency 2}\label{sec:opt-3}
Although symmetry and positive semi-definiteness
are key factors for an efficient numerical scheme for linear solvers,
we may not enjoy full benefits in the previous scheme.
We need the extra post-processing stage to impose the zero mean value condition.
The defect in the previous approach comes from the fact that
the Riesz representation vector for the integral functional does not belong to
the kernel of the linear system.
As shown above,
the kernel of the linear system is closely related with
the coefficient vector for the unity function.
If these two vectors coincide, we can get our solution without any post-processing stage.
The imbalance of $\mfB^\flat$ for the linear independence is also a disadvantage
to numerical implementation.

In this approach, 
we find the numerical solution $u^\natural_h \in V^h_{\#}$ such that
\begin{align}\label{eq:sol-scheme-3}
u^\natural_h = \mathbf{u^\natural} \mfE
\end{align}
where $\mathbf{u^\natural}$ is a solution of
a system of equations for \eqref{eq:discrete-weak-formulation-periodic}
associated with full $\mfE$,
\begin{align}\label{eq:scheme-3}
\mathcal{L}^{\mfE}_h \mathbf{u^\natural}
:=
\begin{bmatrix}
\bS^{\mfB}_h & \mathbf{0} \\
\mathbf{0} & \bS^{\mfA}_h
\end{bmatrix}
\mathbf{u^\natural}
=
\int_\O f \mfE
\end{align}
with
\begin{align}\label{eq:scheme-3-zero mean value}
\mathbf{u^\natural}|_{\mfB} \cdot \mathbf{1}_{\mfB} = 0,
\end{align}
since
$\int_\O \mathbf{v} \mfE = 0$
if and only if
$\mathbf{v}|_{\mfB} \cdot \mathbf{1}_{\mfB} = 0$.
The numerical solution $u^\natural$ is unique because
a solution of the linear system is unique 
upto an additive nontrivial representation for the zero function in $\mfB$.
We want to emphasize that,
unlike the previous scheme,
$\mathbf{1}_{\mfB}$ belongs to the kernel of $\bS^{\mfB}_h$
as shown in \eqref{prop:ker-decomposition}.
It implies that, without any extra post-processing stage,
we can find the solution of the linear system which satisfies the zero mean value condition \eqref{eq:scheme-3-zero mean value}
if an initial guess is chosen to satisfy the same condition.

In summary, we have the numerical solution $u^\natural_h$ as follows.

\begin{scheme}
  \noindent{\bf CG for $\mcS = \mfE$ of rank 2 deficiency}
  \\
{\bf Step 1.} Take an initial vector $\bu^{(0)} \in \BbbR^{|\mfE|}$ which satisfies
$\bu^{(0)}|_{\mfB} \cdot \mathbf{1}_{\mfB} = 0$.
\\
{\bf Step 2.} Solve the singular symmetric positive semi-definite
system \eqref{eq:scheme-3} by the CG and get the Krylov solution
$\mathbf{u^\natural} = \bu^{(n)}$.
\\
{\bf Step 3.} The numerical solution is obtained as $u^\natural_h = \mathbf{u^\natural} \mfE$.
\end{scheme}

\subsection{Option 4: $\mcS = \mfB$ for a symmetric positive semi-definite system with rank deficiency 2}\label{sec:opt-4}
Consider a system of equations associated only with $\mfB$ for
\eqref{eq:discrete-weak-formulation-periodic} to find
$ \mathbf{\bar{u}^\natural} \in V^{\mfB,h}_{\#}$ such that
\begin{align}\label{eq:scheme-4}
\mathcal{L}^{\mfB}_h \mathbf{\bar{u}^\natural}
:=
\bS^{\mfB}_h \mathbf{\bar{u}^\natural}
=
\int_\O f \mfB.
\end{align}
Starting from an initial vector $\bu^{(0)} \in \BbbR^{|\mfB|}$ which satisfies
$\bu^{(0)} \cdot \mathbf{1}_{\mfB} = 0$,
let $\mathbf{\bar{u}^\natural}$ be the Krylov solution of the linear system.
The numerical solution $\bar{u}^\natural_h \in V^{\mfB,h}_{\#}$ is obtained by
\begin{align}\label{eq:sol-scheme-4}
\bar{u}^\natural_h = \mathbf{\bar{u}^\natural} \mfB.
\end{align}
We summarize the above procedure as follows.
\begin{scheme}
  \noindent{\bf CG for $\mcS = \mfB$ of rank 2 deficiency}
  \\
{\bf Step 1.} Take an initial vector $\bu^{(0)} \in \BbbR^{|\mfE|}$ which satisfies
$\bu^{(0)}|_{\mfB} \cdot \mathbf{1}_{\mfB} = 0$.
\\
{\bf Step 2.} Solve the singular symmetric positive semi-definite
system \eqref{eq:scheme-4} by the CG and get the Krylov solution
$\mathbf{\bar{u}^\natural} = \bu^{(n)}$.
\\
{\bf Step 3.} The numerical solution is obtained as $\bar{u}^\natural_h = \mathbf{\bar{u}^\natural} \mfB$.
\end{scheme}

\subsection*{Main Theorem: Relation Between Numerical Solutions}
The following theorem states the relation between all numerical solutions discussed above.

\begin{theorem}\label{thm:numerical-sols}
{Let $(\Tau_h)_{0<h}$ be a family of uniform rectangular
  decomposition, that is, $\Tau_h =\widetilde\Tau_h$ for all $h.$
Assume that $N_x$ and $N_y$ are even.}
Let $u_h$, $u^\flat_h$, $u^\natural_h$, $\bar{u}^\natural_h$ be the numerical solutions of
\eqref{eq:weak-formulation-periodic}
as \eqref{eq:sol-scheme-1}, \eqref{eq:sol-scheme-2}, \eqref{eq:sol-scheme-3}, \eqref{eq:sol-scheme-4}, respectively.
Then
$u_h = u^\flat_h = u^\natural_h$, and
\begin{equation*}
\|u^\natural_h - \bar{u}^\natural_h\|_0
\le C
	h^2 \|f\|_0, \
|u^\natural_h - \bar{u}^\natural_h|_{1,h}
\le C
	h \|f\|_0.
\end{equation*}
\end{theorem}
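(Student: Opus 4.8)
The strategy is to recognize that each of the first three schemes computes the \emph{exact} Galerkin solution of \eqref{eq:discrete-weak-formulation-periodic} in $V^h_\#/\BbbR$, while Scheme 4 computes the Galerkin solution of the same variational problem restricted to the subspace $V^{\mfB,h}_\#/\BbbR\subset V^h_\#/\BbbR$; the difference $u^\natural_h-\bar u^\natural_h$ is then the $a_h$-orthogonal projection of the solution onto $\Span\mfA$, which can be written down and estimated explicitly. I would first record that $a_h(\cdot,\cdot)$ is coercive on $V^h_\#/\BbbR$ (discrete Poincar\'e inequality), so \eqref{eq:discrete-weak-formulation-periodic} has a unique solution $u^G_h\in V^h_\#/\BbbR$, and its restriction to $V^{\mfB,h}_\#/\BbbR$ a unique solution $\bar u^G_h$.

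\textbf{Options 1--3 all equal $u^G_h$.} By \lemref{lem:integral-property-B-A}(1) the cross blocks vanish, so in Option 1 the vector $\mathbf{\tilde{u}^\flat}$ solves the $\mfE^\flat$--Galerkin equations in rows $1,\dots,|\mfB^\flat|-1$ and in all $\mfA$--rows, together with $\mathbf{\tilde{u}^\flat}|_{\mfB^\flat}\cdot\mathbf{1}_{\mfB^\flat}=0$. The full Galerkin system $\mathcal{L}^{\mfE^\flat}_h\mathbf{u}=\int_\O f\mfE^\flat$ is consistent, since its kernel is $\Span\mathbf{w}_{\mfE^\flat}$ and $\mathbf{w}_{\mfB^\flat}\cdot\int_\O f\mfB^\flat=\int_\O f\cdot(\mathbf{w}_{\mfB^\flat}\mfB^\flat)=\int_\O f=0$; adjoining the zero--mean normalization selects a unique solution, which \emph{a fortiori} solves the (smaller) Option 1 system, and by nonsingularity of $\mathbf{\tilde{S}}^{\mfB^\flat}_h$ this must be $\mathbf{\tilde{u}^\flat}$. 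Hence $u_h=u^G_h$. For Option 2 the same consistency gives $\int_\O f\mfE^\flat\in\Im\mathcal{L}^{\mfE^\flat}_h$, so CG returns the Drazin/Krylov solution; since $\mathbf{w}_{\mfE^\flat}\in\ker\mathcal{L}^{\mfE^\flat}_h$ changes $\mathbf{u'}\mfE^\flat$ only by a constant, the post--processing step produces exactly $u^G_h$, so $u^\flat_h=u^G_h$. For Option 3, \propref{prop:ker-decomposition} gives $\ker\bS^\mfB_h=\ker B^\mfB_h\oplus\Span\mathbf{1}_\mfB$; writing $\mathbf n$ for a generator of $\ker B^\mfB_h$, one has $\mathbf n\mfB\equiv0$ and $\mathbf 1_\mfB\mfB\equiv1$, whence $\int_\O f\mfE$ is orthogonal to $\ker\mathcal{L}^\mfE_h$ and the system is consistent; the zero--mean initial guess together with $\mathbf 1_\mfB\in\ker\bS^\mfB_h$ keeps $\mathbf{u^\natural}|_\mfB\cdot\mathbf 1_\mfB=0$, and since $\mathbf n\mfB\equiv0$ the over--completeness of $\mfE$ is harmless (all solution vectors give the same function up to an additive constant). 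Thus $u^\natural_h=u^G_h$, proving $u_h=u^\flat_h=u^\natural_h$.

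\textbf{Option 4 and the decomposition.} The same consistency check, now only with $\ker\bS^\mfB_h$, shows $\int_\O f\mfB\in\Im\bS^\mfB_h$; CG returns a Krylov solution with $\mathbf{\bar{u}^\natural}\cdot\mathbf 1_\mfB=0$ preserved, so $\bar u^\natural_h=\mathbf{\bar{u}^\natural}\mfB$ satisfies $a_h(\bar u^\natural_h,\phi_j)=\int_\O f\phi_j$ for all $\phi_j\in\mfB$ and, because $\int_\O\phi_j$ is the same for every $j$ on the uniform periodic mesh $\Tau_h=\widetilde\Tau_h$, also $\int_\O\bar u^\natural_h=0$; hence $\bar u^\natural_h=\bar u^G_h$. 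Now \lemref{lem:integral-property-B-A}(1)--(3) and \thmref{thm:B} yield the $a_h$--orthogonal decomposition $V^h_\#/\BbbR=(V^{\mfB,h}_\#/\BbbR)\oplus\Span\mfA$; testing the Galerkin problem for $u^G_h$ separately against the two summands gives $u^G_h=\bar u^G_h+w_h$ with $w_h\in\Span\mfA$ solving $a_h(w_h,\psi)=\int_\O f\psi$ for all $\psi\in\mfA$. As $a_h(\psi_x,\psi_y)=0$, we get $w_h=\alpha\psi_x+\beta\psi_y$ with $\alpha=\int_\O f\psi_x/|\psi_x|_{1,h}^2$ and $\beta=\int_\O f\psi_y/|\psi_y|_{1,h}^2$.

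\textbf{Estimates and main obstacle.} On the uniform mesh of cubes of side $h$, a short computation from the defining property \eqref{eq:psi_x} gives, elementwise, $\partial_x\psi_x=\pm2/h$ and $\partial_y\psi_x=0$, so $|\psi_x|_{1,h}^2=4|\O|h^{-2}$, and similarly for $\psi_y$; meanwhile $\|\psi_x\|_0,\|\psi_y\|_0\le C$ by \lemref{lem:integral-property-B-A}(4). Hence $|\alpha|\le\|f\|_0\|\psi_x\|_0/(4|\O|h^{-2})\le Ch^2\|f\|_0$ and likewise $|\beta|\le Ch^2\|f\|_0$, so $\|u^\natural_h-\bar u^\natural_h\|_0=\|w_h\|_0\le(|\alpha|+|\beta|)\max(\|\psi_x\|_0,\|\psi_y\|_0)\le Ch^2\|f\|_0$ and $|u^\natural_h-\bar u^\natural_h|_{1,h}=|w_h|_{1,h}\le(|\alpha|+|\beta|)\,Ch^{-1}\le Ch\|f\|_0$, which are the asserted bounds. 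I expect the main obstacle to be the careful bookkeeping in the second paragraph: one must verify that three structurally different linear systems --- one over--determined with a modified row, one singular with a post--processed kernel component, one genuinely over--complete --- all single out the same finite element function, which hinges on keeping precise track of the kernels, of the identities $\mathbf n\mfB\equiv0$ and $\mathbf 1_\mfB\mfB\equiv1$, of the consistency conditions (all ultimately reducing to $\int_\O f=0$), and of the fact that the CG/Krylov iterates inherit the zero--mean normalization from the initial guess. The final error estimate is routine once the orthogonal decomposition and the explicit elementwise form of $\psi_x,\psi_y$ are in hand.
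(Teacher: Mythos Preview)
Your proof is correct and reaches the same conclusion via a somewhat different, more conceptual route than the paper. The paper works directly at the level of coefficient vectors: it checks row--by--row that $\mathbf{u^\flat}$ satisfies the modified system \eqref{eq:scheme-1} (hence $\mathbf{u^\flat}=\mathbf{\tilde u^\flat}$ by nonsingularity), then shows that the zero--padded extension of $\mathbf{u^\flat}|_{\mfB^\flat}$ solves $\bS^\mfB_h\,\cdot=\int_\O f\mfB$ by exploiting that the column sums of $\bS^\mfB_h$ vanish, that $\mfB$ is a partition of unity, and that $\int_\O f=0$; finally it bounds $\mathbf{u^\natural}|_\mfA=(\bS^\mfA_h)^{-1}\int_\O f\mfA$ componentwise using \lemref{lem:integral-property-B-A}. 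You instead identify a single object, the Galerkin solution $u^G_h\in V^h_\#/\BbbR$, argue that each of Schemes 1--3 selects it (via consistency of the singular systems and the kernel bookkeeping you correctly outline), and then use the $a_h$--orthogonal splitting $V^h_\#/\BbbR=(V^{\mfB,h}_\#/\BbbR)\oplus\Span\mfA$ to write $u^\natural_h-\bar u^\natural_h=w_h=\alpha\psi_x+\beta\psi_y$ and estimate $\alpha,\beta$ explicitly. Both arguments rest on the same key facts (\propref{prop:ker-decomposition}, \lemref{lem:integral-property-B-A}, and the explicit elementwise form of $\psi_x,\psi_y$); your framing is cleaner and makes the role of the orthogonal decomposition transparent, while the paper's argument is more hands--on and closer to what one actually implements. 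One cosmetic point: your phrase ``cubes of side $h$'' should be ``rectangles'' in 2D, and if $h_x\ne h_y$ the constant in $|\psi_x|_{1,h}^2$ changes accordingly, but quasiuniformity absorbs this and the asymptotic estimate is unaffected.
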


\begin{remark}
  \thmref{thm:numerical-sols} provides theoretical error bounds
  with a set of test and trial functions which are redundant but
  easy to implement, instead of a set of exact solutions which are exactly
  fitted but complicated to implement.
\end{remark}

\begin{proof}
Let $\mathbf{u^\flat}$ and $\mathbf{\tilde{u}^\flat}$ be the solutions as in \secrefs{sec:opt-1}{sec:opt-2}, respectively.
Note that two linear systems \eqref{eq:scheme-1} and \eqref{eq:scheme-2}
coincide except $|\mfB^\flat|$-th row.
Even on $|\mfB^\flat|$-th row,
\begin{align*}
\left(\mathcal{\tilde{L}}^{\mfE^\flat}_h \mathbf{u^\flat} \right)_{|\mfB^\flat|}
&=
	\mathbf{1}_{\mfB^\flat} 
	\cdot 
	\left.\left( \mathbf{u'} 
	- \frac{\mathbf{u'}|_{\mfB^\flat} \cdot \mathbf{1}_{\mfB^\flat}}{\mathbf{w}_{\mfB^\flat} \cdot \mathbf{1}_{\mfB^\flat}}
	\mathbf{w}_{\mfE^\flat} \right)\right|_{\mfB^\flat} \\
&=
	\mathbf{1}_{\mfB^\flat} 
	\cdot 
	\left( \mathbf{u'}|_{\mfB^\flat} 
	- \frac{\mathbf{u'}|_{\mfB^\flat} \cdot \mathbf{1}_{\mfB^\flat}}{\mathbf{w}_{\mfB^\flat} \cdot \mathbf{1}_{\mfB^\flat}} 
	\mathbf{w}_{\mfB^\flat} \right) \\
&=
	0.
\end{align*}
Thus
$
\mathcal{\tilde{L}}^{\mfE^\flat}_h \mathbf{u^\flat}
=
	\begin{bmatrix}
	\mathbf{\tilde{f}}_{\mfB^\flat} \\ \mathbf{f}_\mfA
	\end{bmatrix}
=
	\mathcal{\tilde{L}}^{\mfE^\flat}_h \mathbf{\tilde{u}^\flat}
$,
and it implies $\mathbf{u^\flat} = \mathbf{\tilde{u}^\flat}$ 
because $\mathcal{\tilde{L}}^{\mfE^\flat}_h$ is nonsingular.
It concludes $u_h = u^\flat_h$.

Let $\mathbf{u^\natural}$ be the solution as in \secref{sec:opt-3}. Then,
we have $\mathbf{u^\natural}|_{\mfA} = \mathbf{u^\flat}|_{\mfA}.$
Let $\begin{bmatrix} \mathbf{u^\flat}|_{\mfB^\flat} \\ 0 \end{bmatrix}$ be
a trivial extension of $\mathbf{u^\flat}|_{\mfB^\flat}$ into a vector in $\BbbR^{|\mfB|}$
by padding a single zero.
Note that
$\sum_{j=1}^{|\mfB|} (\bS^{\mfB}_h)_{jk} = 0$ for all $1\le k \le |\mfB|$.
Due to the definition of $\mathbf{u^\flat}$, we have
\begin{align*}
\bS^{\mfB}_h \begin{bmatrix} \mathbf{u^\flat}|_{\mfB^\flat} \\ 0 \end{bmatrix}
&=
	\begin{bmatrix}
	\bS^{\mfB^\flat}_h \mathbf{u^\flat}|_{\mfB^\flat} \\
	[\bS^{\mfB}_h]_{|\mfB|,1:|\mfB^\flat|} \mathbf{u^\flat}|_{\mfB^\flat}
	\end{bmatrix}
=
	\begin{bmatrix}
	\int_\O f \mfB^\flat \\
	- \sum_{j \not = |\mfB|} \limits [\bS^{\mfB}_h]_{j, 1:|\mfB^\flat|} \mathbf{u^\flat}|_{\mfB^\flat}
	\end{bmatrix} \\
&=
	\begin{bmatrix}
	\int_\O f \mfB^\flat \\
	- \sum_{j=1}^{|\mfB^\flat|} [\bS^{\mfB^\flat}_h]_{j, 1:|\mfB^\flat|} \mathbf{u^\flat}|_{\mfB^\flat}
	\end{bmatrix}
=
	\begin{bmatrix}
	\int_\O f \mfB^\flat \\
	- \sum_{j=1}^{|\mfB^\flat|} \int_\O f \phi_j
      \end{bmatrix}
  \\
&=
	\begin{bmatrix}
	\int_\O f \mfB^\flat \\
	\int_\O f (\phi_{|\mfB|}-1)
	\end{bmatrix}
=
	\int_\O f \mfB,
\end{align*}
since $\mfB$ is a partition of unity and $\int_\O f = 0$.
On the other hand,
the definition of $\mathbf{u^\natural}$ implies
$\bS^{\mfB}_h \mathbf{u^\natural}|_{\mfB} = \int_\O f \mfB$.
Thus
$\mathbf{u^\natural}|_{\mfB} - \begin{bmatrix} \mathbf{u^\flat}|_{\mfB^\flat} \\ 0 \end{bmatrix}$ 
is in the kernel of $\bS^{\mfB}_h$, which is decomposed as \propref{prop:ker-decomposition}.
Due to the zero mean value condition in each scheme, 
$\left( \mathbf{u^\natural}|_{\mfB} - \begin{bmatrix} \mathbf{u^\flat}|_{\mfB^\flat} \\ 0 \end{bmatrix} \right) \cdot \mathbf{1}_{\mfB} = \mathbf{u^\natural}|_{\mfB} \cdot \mathbf{1}_{\mfB} - \mathbf{u^\flat}|_{\mfB^\flat} \cdot \mathbf{1}_{\mfB^\flat} = 0$.
Therefore
$
\mathbf{u^\natural}|_{\mfB} - \begin{bmatrix} \mathbf{u^\flat}|_{\mfB^\flat} \\ 0 \end{bmatrix}
$
must belong to $\ker B^{\mfB}_h$,
and consequently
$\left( \mathbf{u^\natural}|_{\mfB} - \begin{bmatrix} \mathbf{u^\flat}|_{\mfB^\flat} \\ 0 \end{bmatrix} \right) \mfB = \mathbf{u^\natural}|_{\mfB} \mfB - \mathbf{u^\flat}|_{\mfB^\flat} \mfB^\flat$ is equal to $0$.
This implies $u^\natural_h = u^\flat_h$.

Let $\mathbf{\bar{u}^\natural}$ be the solution as in \secref{sec:opt-4}.
Note that $\mathbf{u^\natural}|_{\mfB} = \mathbf{\bar{u}^\natural}$, and
\begin{align*}
\mathbf{u^\natural}|_{\mfA}
=
	\operatorname{diag}\left(	a_h(\psi_x,\psi_x),a_h(\psi_y,\psi_y)\right)^{-1}
	\int_\O f \mfA
\le C
	h^2 \int_\O f \mfA
\end{align*}
due to \lemref{lem:integral-property-B-A}.
Owing to
$
\int_\O f \psi
\le C 
	\left(\int_\O |f|^2\right)^{1/2}
	\left(\int_\O |\psi|^2\right)^{1/2} 
\le C 
\|f\|_0 \forany \psi \in \mfA,
$
each component of $\mathbf{u^\natural}|_{\mfA}$
is bounded by $\mathcal{O}(h^2)$.
Hence we have desired estimates the difference between $u^\natural_h$ and $\bar{u}^\natural_h$
in $L^2$- and $H^1$-(semi-)norm.
\end{proof}

\subsection{Numerical results}
For the scheme Option 1,
we use the restarted GMRES scheme in MGMRES library
provided by Ju and Burkardt \cite{ju-burkardt-mgmres}.
We emphasize that
we replace one of essentially linearly dependent rows of $\bS^{\mfB^\flat}_h$
by the zero mean value condition in order to make $\mathbf{\tilde{S}}^{\mfB^\flat}_h$ nonsingular.

\begin{example}\label{ex:1}
{Consider the problem} \eqref{eq:weak-formulation-periodic} on the domain $\O=(0,1)^2$
with the exact solution $u(x,y) = s(x) s(y)$
where
$s(t) = \sum_{k=1}^{3} \frac{4}{(2k-1)\pi} \sin \Big(2(2k-1)\pi t\Big),$
a truncated Fourier series for the square wave.
\end{example}
For each option,
the error in energy norm and $L^2$-norm
for Example \ref{ex:1}
are shown in \tabref{tab:ex-truncated-fourier-square-wave}.
We observe that all schemes give very similar numerical solutions.

\begin{example}\label{ex:2}
{Consider the same problem as in Example \ref{ex:1}} with the exact solution $u(x,y) = s(x) s(y)$
where
$s(t) = \exp \left(-\frac{1}{1-(2t-1)^2}\right) t^2 (1-t) + C,$
with a constant $C$ satisfying $\int_{[0,1]} s = 0$.
\end{example}
\tabref{tab:ex-bump-times-smooth} shows numerical results for Example \ref{ex:2}
in each option,
and all options give almost the same result, as the previous example.
The iteration number and elapsed time in each option in the case of $h=1/256$ 
are shown in \tabref{tab:iter-time}.
We observe decrease of the iteration number and elapsed time
in the option 3 compared to the option 2.
Decrease from the option 3 to the option 4 is quite natural
because we only use the node--based functions as trial and test functions for the option 4.

\begin{table}[t]
  \scriptsize
  \centering
  \begin{tabular}{ l | c  c  c  c | c  c  c  c }
    \hline \hline
    & \multicolumn{4}{c|}{Opt 1} & \multicolumn{4}{c}{Opt 2} \\ \cline{2-9} 
    \multicolumn{1}{c|}{$h$} & $|u-u_h|_{1,h}$ & order & $\|u-u_h\|_0$ & order & $|u-u^\flat_h|_{1,h}$ & order & $\|u-u^\flat_h\|_0$ & order \\ \hline
    1/8 & 1.123E+01 & - & 4.230E-01 & - & 1.123E+01 & - & 4.230E-01 & - \\
    1/16 & 5.466E-00 & 1.039 & 8.607E-02 & 2.297 & 5.466E-00 & 1.039 & 8.607E-02 & 2.297 \\
    1/32 & 2.832E-00 & 0.949 & 2.216E-02 & 1.957 & 2.832E-00 & 0.949 & 2.216E-02 & 1.957 \\
    1/64 & 1.429E-00 & 0.987 & 5.585E-03 & 1.989 & 1.429E-00 & 0.987 & 5.585E-03 & 1.989 \\
    1/128 & 7.160E-01 & 0.997 & 1.399E-03 & 1.997 & 7.160E-01 & 0.997 & 1.399E-03 & 1.997 \\
    1/256 & 3.582E-01 & 0.999 & 3.499E-04 & 1.999 & 3.582E-01 & 0.999 & 3.499E-04 & 1.999 \\
  \end{tabular}

  \begin{tabular}{ l | c  c  c  c | c  c  c  c }
    \hline \hline
    & \multicolumn{4}{c|}{Opt 3} & \multicolumn{4}{c}{Opt 4} \\ \cline{2-9} 
    \multicolumn{1}{c|}{$h$} & $|u-u^\natural_h|_{1,h}$ & order & $\|u-u^\natural_h\|_0$ & order & $|u-\bar{u}^\natural_h|_{1,h}$ & order & $\|u-\bar{u}^\natural_h\|_0$ & order \\ \hline
    1/8 & 1.123E+01 & - & 4.230E-01& - & 1.123E+01 & - & 4.230E-01 & - \\
    1/16 & 5.466E-00 & 1.039 & 8.607E-02 & 2.297 & 5.466E-00 & 1.039 & 8.607E-02 & 2.297 \\
    1/32 & 2.832E-00 & 0.949 & 2.216E-02 & 1.957 & 2.832E-00 & 0.949 & 2.216E-02 & 1.957 \\
    1/64 & 1.429E-00 & 0.987 & 5.585E-03 & 1.989 & 1.429E-00 & 0.987 & 5.585E-03 & 1.989 \\
    1/128 & 7.160E-01 & 0.997 & 1.399E-03 & 1.997 & 7.160E-01 & 0.997 & 1.399E-03 & 1.997 \\
    1/256 & 3.582E-01 & 0.999 & 3.499E-04 & 1.999 & 3.582E-01 & 0.999 & 3.499E-04 & 1.999 \\
    \hline \hline
  \end{tabular}
    \caption{{Numerical results for Example \ref{ex:1}.}}
  \label{tab:ex-truncated-fourier-square-wave}
\end{table}

\begin{table}[t]
  \scriptsize
  \centering
  \begin{tabular}{ l | c  c  c  c | c  c  c  c }
    \hline \hline
    & \multicolumn{4}{c|}{Opt 1} & \multicolumn{4}{c}{Opt 2} \\ \cline{2-9} 
    \multicolumn{1}{c|}{$h$} & $|u-u_h|_{1,h}$ & order & $\|u-u_h\|_0$ & order & $|u-u^\flat_h|_{1,h}$ & order & $\|u-u^\flat_h\|_0$ & order \\ \hline
    1/8 & 1.225E-03 & - & 5.649E-05 & - & 1.225E-03 & - & 5.649E-05 & - \\
    1/16 & 6.024E-04 & 1.024 & 1.033E-05 & 2.450 & 6.024E-04 & 1.024 & 1.033E-05 & 2.450 \\
    1/32 & 3.045E-04 & 0.984 & 1.949E-06 & 2.406 & 3.045E-04 & 0.984 & 1.949E-06 & 2.406 \\
    1/64 & 1.527E-04 & 0.996 & 4.682E-07 & 2.058 & 1.527E-04 & 0.996 & 4.682E-07 & 2.058 \\
    1/128 & 7.642E-05 & 0.999 & 1.171E-07 & 1.999 & 7.642E-05 & 0.999 & 1.171E-07 & 1.999 \\
    1/256 & 3.822E-05 & 1.000 & 2.929E-08 & 2.000 & 3.822E-05 & 1.000 & 2.929E-08 & 2.000 \\
  \end{tabular}

  \begin{tabular}{ l | c  c  c  c | c  c  c  c }
    \hline \hline
    & \multicolumn{4}{c|}{Opt 3} & \multicolumn{4}{c}{Opt 4} \\ \cline{2-9} 
    \multicolumn{1}{c|}{$h$} & $|u-u^\natural_h|_{1,h}$ & order & $\|u-u^\natural_h\|_0$ & order & $|u-\bar{u}^\natural_h|_{1,h}$ & order & $\|u-\bar{u}^\natural_h\|_0$ & order \\ \hline
    1/8 & 1.225E-03 & - & 5.649E-05 & - & 1.225E-03 & - & 5.649E-05 & - \\
    1/16 & 6.024E-04 & 1.024 & 1.033E-05 & 2.450 & 6.024E-04 & 1.024 & 1.033E-05 & 2.450 \\
    1/32 & 3.045E-04 & 0.984 & 1.949E-06 & 2.406 & 3.045E-04 & 0.984 & 1.949E-06 & 2.406 \\
    1/64 & 1.527E-04 & 0.996 & 4.682E-07 & 2.058 & 1.527E-04 & 0.996 & 4.682E-07 & 2.058 \\
    1/128 & 7.642E-05 & 0.999 & 1.171E-07 & 1.999 & 7.642E-05 & 0.999 & 1.171E-07 & 1.999 \\
    1/256 & 3.822E-05 & 1.000 & 2.929E-08 & 2.000 & 3.822E-05 & 1.000 & 2.929E-08 & 2.000 \\
    \hline \hline
  \end{tabular}
      \caption{{Numerical results for Example \ref{ex:2}.}}
  \label{tab:ex-bump-times-smooth}
\end{table}

\begin{table}[h!]
  \scriptsize
  \centering
  \begin{tabular}{ c | c  c  c }
    \hline \hline
    & solver & iter & time (sec.) \\ \hline
    Opt 1 & GMRES(20) & 4944 & 61.52 \\
    Opt 2 & CG & 817 & 3.30 \\
    Opt 3 & CG & 437 & 1.80 \\
    Opt 4 & CG & 318 & 1.33 \\
    \hline \hline
  \end{tabular}
  \caption{Iteration number and elapsed time in each option with
    $256\times 256$ mesh}
  \label{tab:iter-time}
\end{table}

\section{Extension to the 3D Case}\label{sec:3d}
In this section we consider the case of $d=3.$

\subsection{Dimension of finite element spaces  in 3D}
The following lemma is the 3D analog of \lemref{lem:dim-formula-2d}.
\begin{lemma}
For $\O \subset \BbbR^3$, we have
$$
\dim(V^h)
    =
	\#(\text{faces}) - 2 \#(\text{cells}) 
	- \#(\text{minimally essential discrete BCs}).
$$
\end{lemma}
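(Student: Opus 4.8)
The plan is to mimic the 2D dimension count from \lemref{lem:dim-formula-2d}, replacing the dice-rule bookkeeping on quadrilaterals with its hexahedral analogue. First I would recall that a function in $V^h$ is determined by its midpoint (face-barycenter) values, one per $(d-1)$-face, subject to the local constraints on each cell. For a single hexahedral cell $K$, a linear polynomial on $K$ has $4$ degrees of freedom, while $K$ has $6$ faces; hence the $6$ barycenter values satisfy $6-4=2$ independent linear relations (the 3D dice rule, as stated in \eqref{eq:dice-rule}: the three opposite-pair sums are all equal, which is $2$ independent equations). So the ``raw'' count before imposing any boundary condition is $\#(\text{faces}) - 2\,\#(\text{cells})$, provided the $2\,\#(\text{cells})$ local relations, taken together, are independent modulo what is forced on the boundary.

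The key step is therefore to verify that the only linear dependencies among the collection of all local dice-rule relations are those supported on boundary faces, exactly as in the 2D argument preceding \lemref{lem:relation-boundary-dof}. Concretely, I would assign to each face a value, write the full system consisting of the $2$ relations per cell, and argue that if a linear combination of these relations involves only interior-face unknowns trivially, then it must be trivial; the ``leftover'' dependencies are precisely the combinations whose support lies on $\mcF^b_h$, and these are accounted for by the $\#(\text{minimally essential discrete BCs})$ term by the very definition of minimally essential discrete BCs (each such boundary relation that is \emph{not} a consequence of the others is one genuine constraint the BC must supply, and conversely each essential discrete boundary value removes exactly one dimension). This gives
\[
\dim(V^h) = \#(\text{faces}) - 2\,\#(\text{cells}) - \#(\text{minimally essential discrete BCs}),
\]
with the understanding, as in the 2D case, that when there is no essential BC the last term is $0$ and the formula reduces to $\#(\text{faces}) - 2\,\#(\text{cells})$.

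The main obstacle I anticipate is the combinatorial claim that interior faces contribute full rank, i.e.\ that summing local relations over cells cancels interior-face contributions in pairs and leaves a system on boundary faces whose rank deficiency is exactly the number of ``naturally induced'' boundary relations. In 2D this was the clean statement of \lemref{lem:relation-boundary-dof} (alternating signs along the boundary, one global relation), but in 3D the boundary is a union of six rectangular faces meeting along edges, so the structure of induced boundary relations is richer and parity-dependent, paralleling the $\eps(N_x)\eps(N_y)$-type phenomena seen earlier. For the present lemma, however, one only needs the \emph{form} of the identity with the boundary effect packaged into $\#(\text{minimally essential discrete BCs})$; the explicit evaluation of that quantity for periodic, Dirichlet, and Neumann BCs in 3D is deferred to the subsequent results, so the proof here is just the Euler-type counting plus the interior-cancellation observation.
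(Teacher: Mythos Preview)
Your approach is essentially the same as the paper's: identify the two independent dice-rule relations per hexahedral cell (from the $6$ face-barycenter values versus $4$ coefficients of a linear polynomial) and conclude by the same counting that gave \lemref{lem:dim-formula-2d}. The paper's proof is in fact even terser than yours: it simply writes out the two relations
\[
v_h(m_1^K)-v_h(m_2^K)+v_h(m_6^K)-v_h(m_5^K)=0,\qquad
v_h(m_1^K)-v_h(m_3^K)+v_h(m_6^K)-v_h(m_4^K)=0
\]
and asserts that ``each relation reduces the number of DOFs in the finite element space by $1$, same as in the 2D case,'' without further justification of global independence.

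One point of language to tighten: your phrase ``the only linear dependencies among the local dice-rule relations are those supported on boundary faces'' conflates two different things. A linear combination of the local relations whose support lies entirely in $\mcF^b_h$ is \emph{not} a dependency --- it is a nontrivial equation on boundary barycenter values (this is exactly what produces the strip relations used in the next proposition, and what makes some essential BCs redundant, thereby \emph{lowering} $\#(\text{minimally essential discrete BCs})$). The genuine independence claim needed for the formula is that no nontrivial linear combination of the $2\,\#(\text{cells})$ relations vanishes \emph{identically} on $\BbbR^{\#(\text{faces})}$. This does hold on a structured $N_x\times N_y\times N_z$ hexahedral mesh (one can see it, for instance, by processing cells in lexicographic order and noting that each new cell contributes at most three ``back-face'' matching conditions against four local DOFs), but both you and the paper leave it implicit.
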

\begin{proof}
We can rewrite the dice rule in a single 3D cubic cell $K\in\Tau_h$ into two separated relations:
\begin{align*}
v_h(m_1^K) - v_h(m_2^K) + v_h(m_6^K) - v_h(m_5^K) &= 0, \\
v_h(m_1^K) - v_h(m_3^K) + v_h(m_6^K) - v_h(m_4^K) &= 0
\end{align*}
for all $v_h\in V^h$
where $m_j^K$ is the barycenter of face $f_j^K$ of $K$,
and the faces are arranged to satisfy that
the sum of indices in opposite faces is equal to $7$, as an ordinary dice.
Since each relation reduces the number of DOFs in the
finite element space by $1$, same as in the 2D case,
the claim is derived in consequence.
\end{proof}

\begin{proposition}(Neumann and Dirichlet BCs in 3D)
\begin{align*}
& \#(\text{minimally essential discrete BCs}) \\
&\quad =
	\begin{cases}
	0 & \text{for Neumann BC}, \\
	\!\begin{aligned}
	& 2 (N_x N_y + N_y N_z + N_z N_x) \\
	& \qquad
	-
	(N_x + N_y + N_z)
	+
	1
	\end{aligned}
	& \text{with homogeneous Dirichlet BC}
	\end{cases}
\end{align*}
Consequently,
\begin{subequations}
\begin{align}
& \dim V^h = N_V-1, \\  
& \dim V^h_0 = N_V^i.  
\end{align}
\end{subequations}
\end{proposition}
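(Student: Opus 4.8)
The plan is to reproduce, in three dimensions, the proof of the two-dimensional proposition that follows \lemref{lem:dim-formula-2d}, now built on the dimension identity $\dim(V^h)=\#(\text{faces})-2\#(\text{cells})-\#(\text{minimally essential discrete BCs})$ just established. Thus I must determine $\#(\text{minimally essential discrete BCs})$ for each of the two boundary conditions, and then translate the resulting formulas into the vertex counts $N_V-1$ and $N_V^i$ of the $N_x\times N_y\times N_z$ box by a finite combinatorial computation.

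For the Neumann condition the boundary term is natural: it is carried by the right-hand side of the weak form and prescribes no discrete value on $V^h$. Hence there is no essential discrete boundary condition at all, the minimally essential set is empty, and $\#(\text{minimally essential discrete BCs})=0$. The dimension identity then reads $\dim V^h=\#(\text{faces})-2\#(\text{cells})$, and I would finish by substituting the box counts $\#(\text{cells})=N_xN_yN_z$ and $\#(\text{faces})=(N_x+1)N_yN_z+N_x(N_y+1)N_z+N_xN_y(N_z+1)$ and reorganizing --- equivalently, using the Euler relation $\#(\text{vertices})-\#(\text{edges})+\#(\text{faces})-\#(\text{cells})=1$ for the contractible box together with the incidence identity $6\#(\text{cells})=2\#(\text{interior faces})+\#(\text{boundary faces})$ --- to arrive at the claimed value $N_V-1$.

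For the homogeneous Dirichlet condition the essential data are the $\#(\text{boundary faces})=2(N_xN_y+N_yN_z+N_zN_x)$ barycentre values on boundary faces, all set to zero. As in \lemref{lem:relation-boundary-dof}, these are not independent: combining the two dice relations of the cells adjacent to the boundary yields relations supported only on boundary barycentres. I would make these explicit --- on each of the six faces of the box one obtains a planar \emph{alternating-sum} relation of the type of \lemref{lem:relation-boundary-dof}, and these six relations are coupled to one another along the twelve edges of the box --- and then prove that the span of all such induced relations has dimension exactly $N_x+N_y+N_z-1$. Concretely, I would single out $\#(\text{boundary faces})-(N_x+N_y+N_z-1)$ boundary barycentres and check that forcing them to vanish propagates, through the dice rules, to the vanishing of every boundary barycentre, while no smaller subset does; this shows the chosen set is minimally essential and gives $\#(\text{minimally essential discrete BCs})=2(N_xN_y+N_yN_z+N_zN_x)-(N_x+N_y+N_z)+1$. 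Feeding $\#=0$ and this value into the dimension identity and simplifying the polynomials in $N_x,N_y,N_z$ then delivers the two asserted formulas $\dim V^h=N_V-1$ and $\dim V^h_0=N_V^i=(N_x-1)(N_y-1)(N_z-1)$.

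The counting of vertices, edges, faces, and cells is routine; the real difficulty is the middle step of the Dirichlet case, namely showing that the dice-rule-induced relations among boundary barycentre values span a space of dimension precisely $N_x+N_y+N_z-1$. This is genuinely harder than its two-dimensional analogue (where the corank was simply $1$), because in 3D the three opposite-face pairs of the box interact along the box's edges, so one must keep careful track of how the six planar alternating-sum relations overlap there and verify that exactly $N_x+N_y+N_z-1$ independent relations --- together with an explicit minimal essential set of that corank --- result.
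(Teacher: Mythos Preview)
Your overall strategy matches the paper's: for Neumann there is nothing to count, and for Dirichlet you take $\#(\text{boundary faces})$ and subtract the dimension of the space of dice-rule-induced relations supported purely on boundary barycentres, aiming for a corank of $N_x+N_y+N_z-1$. The final combinatorial reduction to $N_V-1$ and $N_V^i$ is routine, as you say.

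The gap is in how you propose to produce those relations. You write that ``on each of the six faces of the box one obtains a planar alternating-sum relation of the type of \lemref{lem:relation-boundary-dof}'' and then speak of ``these six relations'' coupled along the twelve edges. But six relations can span a subspace of dimension at most six, so this construction cannot account for a corank of $N_x+N_y+N_z-1$ once the $N_\iota$ exceed $2$. A single alternating sum per macroscopic face is simply not where the relations live in 3D.

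The paper's construction is different and is what you need: for each $1\le i\le N_x$ take the \emph{slab} of $N_y\times N_z$ cells at position $i$ in the $x$-direction, sum the dice relations over that slab, and observe that all interior-face contributions cancel, leaving an alternating sum over the $2(N_y+N_z)$ boundary faces on the lateral \emph{strip} of that slab (the faces perpendicular to the $y$- and $z$-axes). This yields $N_x$ relations; the analogous slabs perpendicular to the $y$- and $z$-axes give $N_y$ and $N_z$ more, for a total of $N_x+N_y+N_z$ strip relations. One then shows that a suitable signed sum of all of them vanishes identically, and that this is the only dependency, giving exactly $N_x+N_y+N_z-1$ independent relations. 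Your plan should be reorganised around these strip relations rather than the six face relations; with that change the remainder of your outline goes through.
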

\begin{proof}
It is enough to consider the homogeneous Dirichlet boundary case
since there is nothing to prove in the Neumann case.
Suppose that the homogeneous Dirichlet BC is given.
Similarly to the argument in 2D,
we need to investigate induced relations on boundary barycenter values.
Consider the {topological} $x$-direction first,
and classify all cells into $N_x$ groups
by their position in $x$.
Then each group consists of $N_y \times N_z$ cells
which are attached in the {topological} $y$- and $z$-directions.
For each cell in a group,
the dice rule in 3D implies a relation between 4 barycenter values on 4 faces
such that each of them is parallel to the {topological} $xy$- or $zx$-plane.
Similarly to the 2D case,
a collection of such relations from all cells in a group derives
a single relation consisting of an alternating sum of $2N_y + 2N_z$ barycenter values on a set of boundary faces
and it will be called a {\em strip} perpendicular to the {topological} $x$-axis.
This induced relation on the strip is well-defined
because the number of faces in the strip is always even.
\figref{fig:cube-strip} shows an example of a strip perpendicular to
the {topological} $x$-axis.
The signs on the strip represent the alternating sum of boundary barycenter values.
For the {topological} $x$-direction, there are $N_x$ strips perpendicular to the {topological} $x$-axis,
and corresponding relations between barycenters on boundary faces.
Repeating similar arguments for the {topological} $y$- and $z$-directions,
we can find totally $N_x + N_y + N_z$ strips and corresponding relations between boundary barycenters.

\begin{figure}[!ht]
\centering
\epsfig{figure=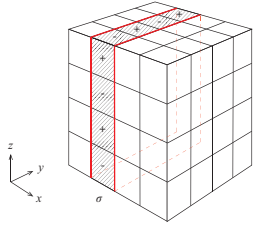,width=0.25\textwidth,height=0.14\textheight}
 \caption{An example of a strip}
 \label{fig:cube-strip}
\end{figure}

However, these induced relations are linearly independent.
Choose an element $K$ from one of corners in $\Tau_h$.
There are three strips $\sigma^x_K$, $\sigma^y_K$, $\sigma^z_K$ which are attached to $K$,
and {topologically} perpendicular to the $x$-, $y$-, $z$-axes, respectively.
Let us call each of these strips {\em the standard strip} for each axis.
There are two options to assign proper alternating signs to barycenter values on each standard strip
in order to make a corresponding alternating relation between boundary barycenters.
For each standard strip,
we choose an option for alternating sign in the relation
to cancel out all boundary barycenters which belong to $K$
when summing up all three relations on three standard strips.
We will call them {\em the standard choices}.
Consider $\sigma$, a strip among others, which is obviously parallel to one of these standard strips,
without loss of generality, $\sigma^x_K$. 
There are also two options for alternating sign in the relation on $\sigma$.
One option is same to the standard choice on $\sigma^x_K$:
in this option,
the sign for each boundary barycenter on $\sigma$ is equal to
the sign for the corresponding boundary barycenter in the standard choice on $\sigma^x_K$.
The other option is just opposite to the standard choice.
We make a choice on $\sigma$ depending on the distance from $\sigma^x_K$.
If $\sigma$ is adjacent to $\sigma^x_K$,
or is away from $\sigma^x_K$ by an even number of faces in {the topological} $x$-direction,
then we choose an option for alternating sign on $\sigma$ to be opposite to the standard choice on $\sigma^x_K$.
If $\sigma$ is away from $\sigma^x_K$ by an odd number of faces in {the topological} $x$-direction,
then the same alternating sign as the standard choice is chosen on $\sigma$.
Under this rule, we can make all choices for alternating sign in the induced relations on all $N_x + N_y + N_z$ strips.
And it can be easily shown that
the sum of all induced relations on all strips with chosen alternating sign becomes a trivial relation.
It implies that there is a single linear relation between those induced relations on all strips.
Therefore,
\begin{align*}
&\#(\text{minimally essential discrete BCs}) \\
& \qquad =
	\#(\text{boundary faces}) - \#(\text{independent relations}) \\
& \qquad =
	2 ( N_x N_y + N_y N_z + N_z N_x)
	-
	(N_x + N_y + N_z -1).
\end{align*}
\end{proof}
Depending on the evenness of $N_x,N_y,$ and $N_z,$ we have the
following result on the dimension of periodic finite element space.
\begin{proposition}(Periodic BC in 3D)\label{prop:dim-periodic-3d}
In the case of periodic BC, we have
\begin{align*}
& \#(\text{minimally essential discrete BCs}) \\
& \quad
=
	(N_x N_y + N_y N_z + N_z N_x) \nonumber \\
	& \qquad -
	\left[N_x \eps({N_y}) \eps({N_z})+ N_y \eps({N_x}) \eps({N_z}) + N_z \eps({N_x}) \eps({N_y})
	- \eps({N_x}) \eps({N_y}) \eps({N_z})\right],
\end{align*}
and
\begin{eqnarray*}
\dim V^h_{\#}
&=N_x N_y N_z -	\left[N_x \eps({N_y}) \eps({N_z})+ N_y \eps({N_x})
                \eps({N_z}) + N_z \eps({N_x}) \eps({N_y}) \right. \nonumber \\
&\qquad\qquad	\left.- \eps({N_x}) \eps({N_y}) \eps({N_z})\right].
\end{eqnarray*}
\end{proposition}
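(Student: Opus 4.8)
The plan is to reduce the statement to a rank computation. By the dimension lemma above, the number of minimally essential discrete BCs for the periodic condition equals the number of \emph{independent} constraints among the $N_xN_y+N_yN_z+N_zN_x$ periodic relations that identify barycenter values on opposite boundary faces, once the dice rule on $V^h$ is taken into account; $\dim V^h_{\#}$ is then $\#(\text{faces})-2\#(\text{cells})$ minus this number. First I would recall from the proof of the preceding proposition the \emph{strip relations}: for each topological direction and each of the $N_x$ (resp.\ $N_y$, $N_z$) slabs of cells perpendicular to it, summing the relevant dice relation over the slab with alternating signs yields one identity that is an alternating sum of barycenter values around a cycle of boundary faces (cf.\ \figref{fig:cube-strip}). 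These $N_x+N_y+N_z$ identities span the space of \emph{all} linear relations the dice rule forces on the vector of boundary barycenter values --- which is exactly what the $3$D Dirichlet computation established --- and they satisfy precisely one linear dependence, so that span has dimension $N_x+N_y+N_z-1$. Hence a combination $\sum_p c_p\,\delta_p$ of the ``opposite-face difference'' functionals $\delta_p$ (one per opposite boundary-face pair) vanishes on $V^h$ iff it is a combination of strip relations; writing $D$ for the dimension of the space of such coefficient tuples $(c_p)$, one gets $\#(\text{minimally essential discrete BCs})=(N_xN_y+N_yN_z+N_zN_x)-D$, so everything comes down to computing $D$.

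Next I would write the strip relations explicitly. Labelling the barycenters of boundary faces perpendicular to the $x$-, $y$-, $z$-axes as $a^x_{0,j,k},a^x_{N_x,j,k}$, $a^y_{i,0,k},a^y_{i,N_y,k}$, $a^z_{i,j,0},a^z_{i,j,N_z}$, one finds that a strip perpendicular to the $x$-axis involves only the $y$- and $z$-type boundary faces lying in that $x$-slab, with alternating signs, and cyclically for the other two directions. Consequently, in a general strip combination $C=\sum_i\alpha_iS^x_i+\sum_j\beta_jS^y_j+\sum_k\gamma_kS^z_k$ the coefficient of $a^x_{N_x,j,k}$ works out to be $-(-1)^{N_x}$ times the coefficient of $a^x_{0,j,k}$, and cyclically. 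Thus $C$ is automatically of ``difference'' form in the $x$-type faces when $N_x$ is even, whereas for $N_x$ odd it has that form only if those coefficients vanish identically; separating variables in that system forces $\beta_j=\lambda(-1)^j$ and $\gamma_k=\lambda(-1)^k$ for a single scalar $\lambda$, and similarly if $N_y$ or $N_z$ is odd.

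It then remains to run the parity case analysis. If $N_x,N_y,N_z$ are all even there is no constraint and the admissible tuples $(\alpha,\beta,\gamma)$ form a space of dimension $N_x+N_y+N_z$; if exactly one, say $N_x$, is odd, dimension $N_x+1$; and if two or three are odd the proportionality constraints are mutually compatible and collapse the admissible set to a line, dimension $1$. Since the map $(\alpha,\beta,\gamma)\mapsto C$ has one-dimensional kernel --- the unique dependence among the strip relations, which itself lies in the admissible set --- $D$ equals the dimension of the admissible set minus $1$, which in every case equals $N_x\eps(N_y)\eps(N_z)+N_y\eps(N_x)\eps(N_z)+N_z\eps(N_x)\eps(N_y)-\eps(N_x)\eps(N_y)\eps(N_z)$. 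Substituting this $D$ into $\#(\text{minimally essential discrete BCs})=(N_xN_y+N_yN_z+N_zN_x)-D$ and then into the dimension lemma produces the claimed formulas.

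The main obstacle is the sign bookkeeping in the two middle steps: carefully establishing --- leaning on the already-proven $3$D Dirichlet dimension formula --- that the strip relations generate \emph{every} relation among boundary barycenter values and have exactly a one-dimensional space of dependences, and then verifying that ``difference form'' is a per-face-type condition, so that one coordinate axis can be forced to contribute vanishing coefficients while another contributes antisymmetric ones, with the proportionality constraints arising when two or three of the $N$'s are odd being mutually consistent. Once that structure is pinned down, the remaining computation is routine.
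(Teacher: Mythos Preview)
Your approach is essentially the paper's, recast as an explicit rank computation: both arguments count redundancies among the $N_xN_y+N_yN_z+N_zN_x$ periodic constraints by invoking the strip relations from the Dirichlet analysis, testing whether the alternating sign pattern on a strip places opposite signs on opposite boundary faces (which happens precisely when the two transverse $N$'s are even), and then correcting for the single linear dependence among all the strips. The paper argues per direction --- a single strip perpendicular to the $x$-axis is ``useful'' iff $N_y$ and $N_z$ are both even, yielding $N_x\eps(N_y)\eps(N_z)$ redundancies from that direction, with one over-count only when all three directions contribute --- whereas you parameterize the entire space of strip combinations $(\alpha,\beta,\gamma)$ and characterize the admissible subset that is of difference form. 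Your framing is more systematic and in principle accounts for redundancies coming from genuine \emph{combinations} of strips that are not individually of difference form (e.g.\ in the one-odd case your admissible set has one more dimension than the number of individually useful strips, but the extra dimension is absorbed by the kernel), so you close a small gap the paper leaves implicit. The point you correctly flag as the main obstacle --- that the strip relations span \emph{all} linear relations the dice rule imposes on boundary barycenter values --- is equally glossed over in the paper's Dirichlet proof; both arguments rely on it.
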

\begin{proof}
Due to the same reason discussed in the 2D case,
an induced relation between boundary barycenter values on a strip
perpendicular to the $x$-axis
can help to impose the periodic BC
only when both $N_y$ and $N_z$ are even.
In this case, coincidence of two barycenter values of the last boundary face pair
is naturally achieved by pairwise coincidence of barycenter values of other boundary face pairs in the strip.
Consequently,
totally $N_x$ periodic BCs can hold naturally
due to other periodic BCs and induced boundary relations on strips
perpendicular to the $x$-axis.
Similar claims hold for induced boundary relations on strips {topologically} perpendicular to $y$-, and $z$-directional axes.

However, as discussed in the case of Dirichlet BC,
due to the linear dependence between $N_x + N_y + N_z$ induced relations on all strips
we have to consider $1$ redundant relation when all $N_x + N_y + N_z$
strips are taken into account of
{\it i.e.},
all $N_x$, $N_y$ and $N_z$ are even.
This completes the proof.
\end{proof}

\subsection{Linear dependence of $\mfB$ in 3D}
In this section,
we identify a global coefficient representation for node--based functions in $\mfB$
with a vector in $\BbbR^{|\mfB|}$.
With this identification, 
we use a vector $\mathbf{c} \in \BbbR^{|\mfB|}$
to represent a global coefficient representation on given 3D grid $\Tau_h$.
In this sense, we denote the local coefficients of $\mathbf{c}$ in $Q \in \Tau_h$ by
$\left.\mathbf{c}\right|_{Q}$.
For the sake of simple description,
we use this abusive notation as long as there is no chance of misunderstanding.
A surjective linear map $B^{\mfB}_h:\BbbR^{|\mfB|} \rightarrow V^{\mfB,h}_{\#}$
defined in \secref{sec:basis-finite-space-periodic} is obviously
extended to the 3D case.

\begin{figure}[!ht]
\centering
\setlength{\unitlength}{2.5mm}
\begin{picture}(8,12)(0,-2)
\thinlines
\multiput(2,2)(2,2){1}{\line(1,0){6}}
\multiput(0,0)(6,0){1}{\line(1,1){2}}
\multiput(2,2)(6,0){1}{\line(0,1){6}}
\thicklines
\multiput(0,0)(2,2){1}{\line(1,0){6}}
\multiput(6,0)(6,0){1}{\line(1,1){2}}
\multiput(0,6)(2,2){2}{\line(1,0){6}}
\multiput(0,6)(6,0){2}{\line(1,1){2}}
\multiput(0,0)(6,0){2}{\line(0,1){6}}
\multiput(0,0)(6,0){2}{\line(0,1){6}}
\multiput(8,2)(6,0){1}{\line(0,1){6}}
\scriptsize
\put(0, 0){\makebox(0.0,0.0)[tr]{$-1$}}
\put(6, 0){\makebox(0.0,0.0)[tr]{$+1$}}
\put(2, 2){\makebox(0.0,0.0)[bl]{$+1$}}
\put(8, 2){\makebox(0.0,0.0)[bl]{$-1$}}
\put(0, 6){\makebox(0.0,0.0)[tr]{$+1$}}
\put(6, 6){\makebox(0.0,0.0)[tr]{$-1$}}
\put(2, 8){\makebox(0.0,0.0)[bl]{$-1$}}
\put(8, 8){\makebox(0.0,0.0)[bl]{$+1$}}
\put(3, -1.5){\makebox(0.0,0.0)[c]{$\mcA$}}
\end{picture}
\hspace{4cm}
\begin{picture}(8,12)(0,-2)
\thinlines
\multiput(0,0)(6,0){1}{\line(1,1){2}}
\multiput(2,2)(6,0){1}{\line(0,1){6}}
\thicklines
\multiput(6,0)(6,0){1}{\line(1,1){2}}
\multiput(0,6)(6,0){2}{\line(1,1){2}}
\multiput(0,0)(6,0){2}{\line(0,1){6}}
\multiput(0,0)(6,0){2}{\line(0,1){6}}
\multiput(8,2)(6,0){1}{\line(0,1){6}}
\scriptsize
\put(0, 0){\makebox(0.0,0.0)[tr]{$-1$}}
\put(6, 0){\makebox(0.0,0.0)[tr]{$-1$}}
\put(2, 2){\makebox(0.0,0.0)[bl]{$+1$}}
\put(8, 2){\makebox(0.0,0.0)[bl]{$+1$}}
\put(0, 6){\makebox(0.0,0.0)[tr]{$+1$}}
\put(6, 6){\makebox(0.0,0.0)[tr]{$+1$}}
\put(2, 8){\makebox(0.0,0.0)[bl]{$-1$}}
\put(8, 8){\makebox(0.0,0.0)[bl]{$-1$}}
{\color{red}
\thicklines
\multiput(0,0)(2,2){2}{\line(1,0){6}}
\multiput(0,6)(2,2){2}{\line(1,0){6}}
}
\put(3, -1.5){\makebox(0.0,0.0)[c]{$\mcX$}}
\end{picture}
\\
\begin{picture}(2,2)(0,0)
\thicklines
\put(0,0){\vector(1,0){3}}
\put(3,-0.2){\makebox(0.0,0.0)[t]{$x$}}
\put(0,0){\vector(1,1){1.5}}
\put(1.7,1.7){\makebox(0.0,0.0)[bl]{$y$}}
\put(0,0){\vector(0,1){3}}
\put(-0.2,3){\makebox(0.0,0.0)[r]{$z$}}
\end{picture}
\\
\begin{picture}(8,10)(0,-2)
\thinlines
\multiput(2,2)(2,2){1}{\line(1,0){6}}
\multiput(2,2)(6,0){1}{\line(0,1){6}}
\thicklines
\multiput(0,0)(2,2){1}{\line(1,0){6}}
\multiput(0,6)(2,2){2}{\line(1,0){6}}
\multiput(0,0)(6,0){2}{\line(0,1){6}}
\multiput(0,0)(6,0){2}{\line(0,1){6}}
\multiput(8,2)(6,0){1}{\line(0,1){6}}
\scriptsize
\put(0, 0){\makebox(0.0,0.0)[tr]{$-1$}}
\put(6, 0){\makebox(0.0,0.0)[tr]{$+1$}}
\put(2, 2){\makebox(0.0,0.0)[bl]{$-1$}}
\put(8, 2){\makebox(0.0,0.0)[bl]{$+1$}}
\put(0, 6){\makebox(0.0,0.0)[tr]{$+1$}}
\put(6, 6){\makebox(0.0,0.0)[tr]{$-1$}}
\put(2, 8){\makebox(0.0,0.0)[bl]{$+1$}}
\put(8, 8){\makebox(0.0,0.0)[bl]{$-1$}}
{\color{red}
\thicklines
\multiput(0,0)(6,0){2}{\line(1,1){2}}
\multiput(0,6)(6,0){2}{\line(1,1){2}}
}
\put(3, -1.5){\makebox(0.0,0.0)[c]{$\mcY$}}
\end{picture}
\hspace{4cm}
\begin{picture}(8,10)(0,-2)
\thinlines
\multiput(2,2)(2,2){1}{\line(1,0){6}}
\multiput(0,0)(6,0){1}{\line(1,1){2}}
\thicklines
\multiput(0,0)(2,2){1}{\line(1,0){6}}
\multiput(6,0)(6,0){1}{\line(1,1){2}}
\multiput(0,6)(2,2){2}{\line(1,0){6}}
\multiput(0,6)(6,0){2}{\line(1,1){2}}
\multiput(8,2)(6,0){1}{\line(0,1){6}}
\scriptsize
\put(0, 0){\makebox(0.0,0.0)[tr]{$-1$}}
\put(6, 0){\makebox(0.0,0.0)[tr]{$+1$}}
\put(2, 2){\makebox(0.0,0.0)[bl]{$+1$}}
\put(8, 2){\makebox(0.0,0.0)[bl]{$-1$}}
\put(0, 6){\makebox(0.0,0.0)[tr]{$-1$}}
\put(6, 6){\makebox(0.0,0.0)[tr]{$+1$}}
\put(2, 8){\makebox(0.0,0.0)[bl]{$+1$}}
\put(8, 8){\makebox(0.0,0.0)[bl]{$-1$}}
{\color{red}
\thicklines
\multiput(0,0)(6,0){2}{\line(0,1){6}}
\multiput(2,2)(6,0){2}{\line(0,1){6}}
}
\put(3, -1.5){\makebox(0.0,0.0)[c]{$\mcZ$}}
\end{picture}
\caption{Nontrivial representations for the zero function in an element: $\mcA$, $\mcX$, $\mcY$, $\mcZ$}
\label{fig:nontrivial-zero-3d}
\end{figure}

As shown in \figref{fig:nontrivial-zero-3d},
there are exactly 4 kinds of local coefficient representation for the zero function in a single element.
The value at each vertex represents the coefficient for the corresponding 
node--based function in $\mfB$.
If any global coefficient representation for the zero function is restricted in an element,
then it has to be a linear combination of these 4 elementary representations
which are denoted by $\mcA, \mcX, \mcY$ and $\mcZ$, respectively.
In other words,
any global representation for the zero function is obtained
by a consecutive extension of local representation in an appropriate way.

For $D= \mcA, \mcX\cup\mcA, \mcY\cup\mcA,
\mcZ\cup\mcA,  \mcX\cup\mcY\cup\mcZ\cup\mcA,$ designate $\mcS_{\mathcal D}$ the following subspace consisting of global representation:
\begin{eqnarray*}
\mcS_{\mathcal D}:= \left\{ \mathbf{c} \in \BbbR^{|\mfB|} ~|~ 
  \left.\mathbf{c}\right|_{Q} \in \Span \{\mathcal D\}
  \forany Q\in\Tau_h \right\}.
\end{eqnarray*}
\begin{remark}
The definition of $B^{\mfB}_h$ implies
$\ker B^{\mfB}_h = \mcS_{\mcX\cup\mcY\cup\mcZ\cup\mcA}$.
\end{remark}
We then have the following matching conditions
on every face which is shared by two adjacent elements.
\begin{lemma}\label{lem:relation-s-xyza}
Let $\mathbf{c} \in \mcS_{\mcX\cup\mcY\cup\mcZ\cup\mcA}$, and
$c^{\mcX}_{ijk}, c^{\mcY}_{ijk}, c^{\mcZ}_{ijk}, c^{\mcA}_{ijk}$
denote coefficients of $\mathbf{c}$ in an element $Q_{ijk}\in\Tau_h$
for $\mcX$, $\mcY$, $\mcZ$, $\mcA$, respectively,
{\em i.e.},
$
\left.\mathbf{c}\right|_{Q_{ijk}} 
= 
	c^{\mcX}_{ijk} \mcX
	+
	c^{\mcY}_{ijk} \mcY
	+
	c^{\mcZ}_{ijk} \mcZ
	+
	c^{\mcA}_{ijk} \mcA.
$
Then for all $1\le i \le N_x$, $1\le j \le N_y$, $1\le k \le N_z$,
\begin{subeqnarray}\label{eq:relations} 
&&c^{\mcX}_{ijk} - c^{\mcA}_{ijk}
=
	c^{\mcX}_{(i+1)jk} + c^{\mcA}_{(i+1)jk}, \quad
c^{\mcY}_{ijk}
=
	- c^{\mcY}_{(i+1)jk}, \quad
c^{\mcZ}_{ijk}
=
	- c^{\mcZ}_{(i+1)jk}, \slabel{eq:relation-x-direction}\\
\slabel{eq:relation-y-direction}
&&c^{\mcY}_{ijk} - c^{\mcA}_{ijk}
=
	c^{\mcY}_{i(j+1)k} + c^{\mcA}_{i(j+1)k}, \quad
c^{\mcZ}_{ijk}
=
	- c^{\mcZ}_{i(j+1)k},\quad
c^{\mcX}_{ijk}
=
	- c^{\mcX}_{i(j+1)k},\\
\slabel{eq:relation-z-direction}
&&c^{\mcZ}_{ijk} - c^{\mcA}_{ijk}
=
	c^{\mcZ}_{ij(k+1)} + c^{\mcA}_{ij(k+1)};\quad
c^{\mcX}_{ijk}
=
  - c^{\mcX}_{ij(k+1)};\quad
c^{\mcY}_{ijk}
=
	- c^{\mcY}_{ij(k+1)}.
      \end{subeqnarray}
      Here all indices are understood up to modulo $N_x$, $N_y$, $N_z$, respectively,
due to periodicity.
\end{lemma}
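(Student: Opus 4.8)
The plan is to read each of the three displayed groups of relations as the compatibility condition forced on two neighbouring cells by the fact that they share a $2$--face. First I would coordinatize the reference cube $[0,1]^3$, indexing its eight vertices by $v_{\alpha\beta\gamma}$ with $(\alpha,\beta,\gamma)\in\{0,1\}^3$, so that a local coefficient representation on a cell is simply a function $\{0,1\}^3\to\BbbR$. Using a single vertex labelling for every cell (legitimate because all cells of $\Tau_h$ are combinatorially translates of one reference cell), I would transcribe the four elementary zero stencils from \figref{fig:nontrivial-zero-3d}: up to an overall sign they are
\begin{align*}
\mcA_{\alpha\beta\gamma}=(-1)^{1+\alpha+\beta+\gamma},\qquad
\mcX_{\alpha\beta\gamma}=(-1)^{1+\beta+\gamma},\qquad
\mcY_{\alpha\beta\gamma}=(-1)^{1+\alpha+\gamma},\qquad
\mcZ_{\alpha\beta\gamma}=(-1)^{1+\alpha+\beta},
\end{align*}
so that $\mcX$ is independent of $\alpha$, $\mcY$ of $\beta$, and $\mcZ$ of $\gamma$. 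I would also note that each pattern has vanishing coefficient sum over the four vertices of every face of the cube --- this is precisely the statement that it represents the zero function, read off at face midpoints --- so on any fixed face the restricted patterns lie in the three--dimensional space of zero--mean functions, spanned by the three nontrivial characters of $(\mathbb{Z}/2\mathbb{Z})^2$.

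\textbf{The matching principle.} Next I would regard $\Tau_h$ as a decomposition of the flat $3$--torus obtained by gluing each pair of opposite boundary faces of $\O$, which is exactly the periodic identification of opposite boundary nodes used to define $\mfB$. Then $|\mfB|=N_xN_yN_z$ is the number of torus vertices, every $2$--face is shared by exactly two cells (wrap--around face pairs included), and a vector $\mathbf{c}\in\ker B^{\mfB}_h=\mcS_{\mcX\cup\mcY\cup\mcZ\cup\mcA}$ assigns one coefficient to each vertex. Hence, for adjacent cells $Q,Q'$ with common face $f$, the local expansions $\mathbf{c}|_{Q}$ and $\mathbf{c}|_{Q'}$ must take the same value at each of the four vertices of $f$; this reduces the lemma to expanding those restrictions and comparing coefficients.

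\textbf{Carrying out the comparison.} I would apply the principle to $Q=Q_{ijk}$ and $Q'=Q_{(i+1)jk}$, whose common face $f$ is the $\{\alpha=1\}$ face of $Q$ and the $\{\alpha=0\}$ face of $Q'$, parametrized by $(\beta,\gamma)\in\{0,1\}^2$. Substituting the stencils, $\mathbf{c}|_{Q}$ restricted to $f$ expands in the character basis $\{1,(-1)^{\beta},(-1)^{\gamma},(-1)^{\beta+\gamma}\}$ with coefficients assembled from $c^{\mcX}_{ijk},c^{\mcY}_{ijk},c^{\mcZ}_{ijk},c^{\mcA}_{ijk}$ (the patterns evaluated at $\alpha=1$), and $\mathbf{c}|_{Q'}$ on $f$ expands the same way with the patterns evaluated at $\alpha=0$; the constant component vanishes on both sides by the zero--mean property and so contributes nothing. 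Equating the coefficients of $(-1)^{\beta+\gamma}$, of $(-1)^{\gamma}$, and of $(-1)^{\beta}$, and simplifying signs, should yield exactly the three identities \eqref{eq:relation-x-direction}. I would then obtain \eqref{eq:relation-y-direction} and \eqref{eq:relation-z-direction} by cyclically permuting the ordered triples $(\mcX,\mcY,\mcZ)$ and $(\alpha,\beta,\gamma)$, a symmetry shared by the stencils and the torus decomposition. Since the entire comparison takes place on the torus, the index arithmetic is automatically understood modulo $N_x,N_y,N_z$, which handles the last clause of the statement.

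\textbf{Expected obstacle.} The only genuinely delicate part is the sign bookkeeping: committing to one vertex labelling, transcribing the four $\pm1$ stencils from \figref{fig:nontrivial-zero-3d} consistently, and tracking how each of $\mcA,\mcX,\mcY,\mcZ$ transforms between the $\{\alpha=1\}$ face of a cell and the $\{\alpha=0\}$ face of its neighbour --- this is where the asymmetry that singles out $\mcX$ in the $x$--direction (and its cyclic analogues for $\mcY$ in $y$ and $\mcZ$ in $z$) shows up, producing the $\pm$ pattern on the right-hand sides. A small point worth stating explicitly is that the identification of nodes used to define $\mfB$ is exactly the torus gluing, so that the wrap--around face pairs behave like interior faces and the relations indeed hold over the full ranges $1\le i\le N_x$, $1\le j\le N_y$, $1\le k\le N_z$.
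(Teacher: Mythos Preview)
Your approach is correct and is essentially the same as the paper's: both match the four vertex values on the shared face between $Q_{ijk}$ and its neighbour and read off the three relations from the resulting linear system, then repeat for the $y$- and $z$-directions. The only difference is presentational --- the paper writes out the four raw vertex equations and invokes a ``simple calculation,'' whereas your character-basis expansion on $(\mathbb{Z}/2\mathbb{Z})^2$ diagonalizes that calculation so the three identities appear immediately.
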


\begin{remark}\label{rmk:well-definedness-s-xyza}
Conversely, 
local relations \eqref{eq:relation-x-direction}--\eqref{eq:relation-z-direction}
in \lemref{lem:relation-s-xyza} 
for all $1\le i \le N_x$, $1\le j \le N_y$, $1\le k \le N_z$ imply
the well-definedness of $\mathbf{c} \in \mcS_{\mcX\cup\mcY\cup\mcZ\cup\mcA}$,
{\em i.e.}, on each face shared by two adjacent elements the vertex values are matching.
\end{remark}

\begin{proof}[Proof of \lemref{lem:relation-s-xyza}]
Two elements $Q_{ijk}$ and $Q_{(i+1)jk}$ are adjacent in {the topological}  $x$-direction,
and sharing a common face {topologically} perpendicular to the $x$-axis.
Thus the vertex values on the right face of {the topologically} left element $Q_{ijk}$ have to be matched with
the vertex values on the {the topologically} left face of {the topologically} right element $Q_{(i+1)jk}$.
Since there are 4 nodes in the common face,
we have 4 equations in 8 variables:
\begin{subequations}\label{eq:relation-x-direction-node}
\begin{align}
- c^{\mcX}_{ijk} + c^{\mcY}_{ijk} + c^{\mcZ}_{ijk} + c^{\mcA}_{ijk}
&=
	- c^{\mcX}_{(i+1)jk} - c^{\mcY}_{(i+1)jk} - c^{\mcZ}_{(i+1)jk} - c^{\mcA}_{(i+1)jk}, \\
c^{\mcX}_{ijk} + c^{\mcY}_{ijk} - c^{\mcZ}_{ijk} - c^{\mcA}_{ijk}
&=
	\phantom{+} c^{\mcX}_{(i+1)jk} - c^{\mcY}_{(i+1)jk} + c^{\mcZ}_{(i+1)jk} + c^{\mcA}_{(i+1)jk}, \\
c^{\mcX}_{ijk} - c^{\mcY}_{ijk} + c^{\mcZ}_{ijk} - c^{\mcA}_{ijk}
&=
	\phantom{+} c^{\mcX}_{(i+1)jk} + c^{\mcY}_{(i+1)jk} - c^{\mcZ}_{(i+1)jk} + c^{\mcA}_{(i+1)jk}, \\
- c^{\mcX}_{ijk} - c^{\mcY}_{ijk} - c^{\mcZ}_{ijk} + c^{\mcA}_{ijk}
&=
	- c^{\mcX}_{(i+1)jk} + c^{\mcY}_{(i+1)jk} + c^{\mcZ}_{(i+1)jk} - c^{\mcA}_{(i+1)jk}.
\end{align}
\end{subequations}
Simple calculation shows that \eqref{eq:relation-x-direction-node} are equivalent to \eqref{eq:relation-x-direction}.
Similarly, considering faces {topologically} perpendicular to the $y$- and $z$-directions,
we get \eqref{eq:relation-y-direction} and \eqref{eq:relation-z-direction}, respectively.
\end{proof}

The next decomposition theorem is essential for the dimension analysis
in the 3D case.

\begin{theorem}[Decomposition Theorem]\label{thm:quotient-space-decomposition}
The quotient space $\mcS_{\mcX\cup\mcY\cup\mcZ\cup\mcA}/\mcS_{\mcA}$
can be decomposed as
\begin{align}
\mcS_{\mcX\cup\mcY\cup\mcZ\cup\mcA}/\mcS_{\mcA}
=
	\mcS_{\mcX\cup\mcA}/\mcS_{\mcA}
	\oplus
	\mcS_{\mcY\cup\mcA}/\mcS_{\mcA}
	\oplus
	\mcS_{\mcZ\cup\mcA}/\mcS_{\mcA}.
\end{align}
\end{theorem}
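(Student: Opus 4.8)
The plan is to establish the two inclusions of the claimed direct-sum decomposition. The inclusion $\supseteq$ is immediate, since $\Span\{\mcX,\mcA\}$, $\Span\{\mcY,\mcA\}$, and $\Span\{\mcZ,\mcA\}$ are all contained in $\Span\{\mcX,\mcY,\mcZ,\mcA\}$, so the three quotient subspaces on the right sit inside $\mcS_{\mcX\cup\mcY\cup\mcZ\cup\mcA}/\mcS_{\mcA}$ and so does their sum. It therefore remains to show that this sum is \emph{direct} and that it is \emph{all} of the left-hand side.

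For directness, I would take $\mathbf{c}^{(1)}\in\mcS_{\mcX\cup\mcA}$, $\mathbf{c}^{(2)}\in\mcS_{\mcY\cup\mcA}$, $\mathbf{c}^{(3)}\in\mcS_{\mcZ\cup\mcA}$ with $\mathbf{c}^{(1)}+\mathbf{c}^{(2)}+\mathbf{c}^{(3)}\in\mcS_{\mcA}$ and read off the coefficient of $\mcX$ cell by cell. Because the four patterns $\mcX,\mcY,\mcZ,\mcA$ are linearly independent in a single element (\figref{fig:nontrivial-zero-3d}), the $\mcX$-coefficient of the sum coincides with that of $\mathbf{c}^{(1)}$, while every element of $\mcS_{\mcA}$ has vanishing $\mcX$-coefficient; hence $\mathbf{c}^{(1)}$ has zero $\mcX$-coefficient in every cell, i.e.\ $\mathbf{c}^{(1)}\in\mcS_{\mcA}$, and by symmetry $\mathbf{c}^{(2)},\mathbf{c}^{(3)}\in\mcS_{\mcA}$. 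This is precisely the independence of the three subspaces modulo $\mcS_{\mcA}$.

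For the spanning statement, given $\mathbf{c}\in\mcS_{\mcX\cup\mcY\cup\mcZ\cup\mcA}$ with cell coefficients $c^{\mcX}_{ijk},c^{\mcY}_{ijk},c^{\mcZ}_{ijk},c^{\mcA}_{ijk}$ as in \lemref{lem:relation-s-xyza}, I would peel off an element $\mathbf{c}^{(1)}\in\mcS_{\mcX\cup\mcA}$ carrying \emph{the same} $\mcX$-coefficients as $\mathbf{c}$ together with suitably chosen $\mcA$-coefficients $a^{(1)}_{ijk}$, and likewise $\mathbf{c}^{(2)}\in\mcS_{\mcY\cup\mcA}$ and $\mathbf{c}^{(3)}\in\mcS_{\mcZ\cup\mcA}$. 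The point is that once these three exist, $\mathbf{c}-\mathbf{c}^{(1)}-\mathbf{c}^{(2)}-\mathbf{c}^{(3)}$ lies in $\mcS_{\mcX\cup\mcY\cup\mcZ\cup\mcA}$ and has all of its $\mcX$-, $\mcY$-, $\mcZ$-coefficients equal to zero, hence lies in $\mcS_{\mcA}$ by definition; in particular no compatibility between $a^{(1)}$, $a^{(2)}$, $a^{(3)}$ is needed, which keeps the bookkeeping light. To build $\mathbf{c}^{(1)}$ I would invoke \rmkref{rmk:well-definedness-s-xyza}: specialising the relations of \lemref{lem:relation-s-xyza} to a pattern of the form $c^{\mcX}\mcX+a^{(1)}\mcA$ shows that well-definedness is equivalent to $a^{(1)}$ alternating in the topological $y$- and $z$-directions together with $a^{(1)}_{ijk}+a^{(1)}_{(i+1)jk}=c^{\mcX}_{ijk}-c^{\mcX}_{(i+1)jk}$, the remaining relations being either trivial or identities already valid for $\mathbf{c}$ (notably $c^{\mcX}_{ijk}=-c^{\mcX}_{i(j+1)k}=-c^{\mcX}_{ij(k+1)}$). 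If $N_y$ or $N_z$ is odd this alternation forces $c^{\mcX}\equiv0$ and I would simply take $a^{(1)}\equiv0$; if $N_y$ and $N_z$ are both even I would write $c^{\mcX}_{ijk}=(-1)^{j+k}d_i$ with $d_i:=c^{\mcX}_{i00}$, look for $a^{(1)}_{ijk}=(-1)^{j+k}e_i$ (which automatically alternates), and reduce the matching problem to the scalar circulant system $e_i+e_{i+1}=d_i-d_{i+1}$ for $i$ modulo $N_x$.

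The hard part will be the solvability of this circulant system when $N_x$ is even, where it carries a one-dimensional obstruction, namely $\sum_i(-1)^i(d_i-d_{i+1})=0$. I would discharge it by showing that the defining relations of $\mcS_{\mcX\cup\mcY\cup\mcZ\cup\mcA}$ already force it: setting $j=k=0$ in the first equation of \eqref{eq:relation-x-direction} gives $c^{\mcA}_{i00}+c^{\mcA}_{(i+1)00}=d_i-d_{i+1}$, and multiplying by $(-1)^i$ and summing over $i$ modulo $N_x$ — which is meaningful exactly because $N_x$ is even — makes the left-hand side telescope to $0$ while the right-hand side equals $2\sum_i(-1)^i d_i$; the same reindexing identifies the obstruction $\sum_i(-1)^i(d_i-d_{i+1})$ with $2\sum_i(-1)^i d_i$, so it vanishes. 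With $\mathbf{c}^{(1)}$ constructed, $\mathbf{c}^{(2)}$ and $\mathbf{c}^{(3)}$ are obtained by the same argument with the three topological axes cyclically permuted. Beyond this parity bookkeeping, everything is elementary linear algebra on the cell patterns $\mcX,\mcY,\mcZ,\mcA$.
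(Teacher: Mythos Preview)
Your proposal is correct and follows the same overall skeleton as the paper: reduce to showing directness (cell-by-cell, using that $\mcX,\mcY,\mcZ,\mcA$ are independent local patterns) and spanning (peel off a piece in each $\mcS_{\star\cup\mcA}$ carrying the corresponding coefficient of $\mathbf{c}$, leaving a remainder in $\mcS_{\mcA}$). Your directness argument is in fact more careful than the paper's, which only records pairwise intersections.

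The genuine difference is in how the $\mcA$-coefficients of $\mathbf{c}^{(1)}$ are manufactured. You set up the circulant system $e_i+e_{i+1}=d_i-d_{i+1}$, split on the parity of $N_x$, and in the even case verify that the obstruction $\sum_i(-1)^i(d_i-d_{i+1})$ vanishes by invoking \eqref{eq:relation-x-direction} for the $\mcA$-coefficients of $\mathbf{c}$. The paper skips all of this by simply \emph{writing down} the solution: it sets $u^{\mcA}_{ijk}=(-1)^{j+k}c^{\mcA}_{i11}$, i.e.\ $e_i=c^{\mcA}_{i11}$, borrowing directly from the original $\mathbf{c}$. The relation \eqref{eq:relation-x-direction} at $j=k=1$ is then literally $e_i+e_{i+1}=d_i-d_{i+1}$, so well-definedness is inherited rather than re-derived, and no parity case split or obstruction check is needed. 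Your route makes the underlying linear-algebra constraint explicit; the paper's route is shorter because it notices that $\mathbf{c}$ already carries a solution.
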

\begin{proof}
It is clear that
$
\mcS_{\mcA} 
\subset
\mcS_{\mcX\cup\mcA}, 
\mcS_{\mcY\cup\mcA},
\mcS_{\mcZ\cup\mcA}
\subset
\mcS_{\mcX\cup\mcY\cup\mcZ\cup\mcA}
$
and
$
\mcS_{\mcX\cup\mcA} \cap \mcS_{\mcY\cup\mcA}
=
	\mcS_{\mcY\cup\mcA} \cap \mcS_{\mcZ\cup\mcA}
=
	\mcS_{\mcZ\cup\mcA} \cap \mcS_{\mcX\cup\mcA}
=
	\mcS_{\mcA}
$.
Thus it is enough to show that for any $\mathbf{c} \in \mcS_{\mcX\cup\mcY\cup\mcZ\cup\mcA}$,
there exist 
$\bu \in \mcS_{\mcX\cup\mcA}$,
$\mathbf{v} \in \mcS_{\mcY\cup\mcA}$,
$\mathbf{w} \in \mcS_{\mcZ\cup\mcA}$
such that
$
\mathbf{c} \in \bu + \mathbf{v} + \mathbf{w} + \mcS_{\mcA}.
$

Let $c^{\mcX}_{ijk}$, $c^{\mcY}_{ijk}$, $c^{\mcZ}_{ijk}$, $c^{\mcA}_{ijk}$
denote the coefficients of $\mathbf{c}$ in $Q_{ijk}\in\Tau_h$
for $\mcX$, $\mcY$, $\mcZ$, $\mcA$, respectively,
{\em i.e.},
$
\left.\mathbf{c}\right|_{Q_{ijk}} 
= 
	c^{\mcX}_{ijk} \mcX
	+
	c^{\mathcal{nY}}_{ijk} \mcY
	+
	c^{\mcZ}_{ijk} \mcZ
	+
	c^{\mcA}_{ijk} \mcA.
$
Due to \lemref{lem:relation-s-xyza},
the relations \eqref{eq:relation-x-direction}--\eqref{eq:relation-z-direction} hold.
Now we construct $\bu$, $\mathbf{v}$, and $\mathbf{w}$.
First, define $\bu \in \BbbR^{|\mfB|}$ by
\begin{align}
\left.\bu\right|_{Q_{ijk}}
:=
	u^{\mcX}_{ijk} \mcX
	+
	u^{\mcA}_{ijk} \mcA
	\quad \text{where} \quad
	u^{\mcX}_{ijk} = c^{\mcX}_{ijk},
	u^{\mcA}_{ijk} = (-1)^{j+k} c^{\mcA}_{i11}.
\end{align}
We have $u^{\mcY}_{ijk} = u^{\mcZ}_{ijk} = 0$.
We can check the followings.
\begin{enumerate}
\item $\bu$ is well-defined, and belongs to $\mcS_{\mcX\cup\mcY\cup\mcZ\cup\mcA}$:
	See \rmkref{rmk:well-definedness-s-xyza}.
	For a face shared by two adjacent elements $Q_{ijk}$ and $Q_{(i+1)jk}$,
	\begin{align*}
	u^{\mcX}_{ijk} - u^{\mcA}_{ijk}
	&= c^{\mcX}_{ijk} - (-1)^{j+k}c^{\mcA}_{i11} \\
	&= c^{\mcX}_{(i+1)jk} + (-1)^{j+k}c^{\mcA}_{(i+1)11}
	= u^{\mcX}_{(i+1)jk} + u^{\mcA}_{(i+1)jk}.
	\end{align*}
	Thus $\bu$ is matching on all faces perpendicular to the $x$-axis.
	For the faces perpendicular to the $y$-axis, we have
	\begin{align*}
	& u^{\mcX}_{ijk}
	= c^{\mcX}_{ijk}
	= - c^{\mcX}_{i(j+1)k}
	= - u^{\mcX}_{i(j+1)k}, \\
	& - u^{\mcA}_{ijk}
	= - (-1)^{j+k} c^{\mcA}_{i11}
	= (-1)^{j+1+k} c^{\mcA}_{i11}
	= u^{\mcA}_{i(j+1)k},
	\end{align*}
	and similar for the faces perpendicular to the $z$-axis.
	Therefore $\bu$ is also matching along the $y$- and $z$-directions.
\item $\bu \in \mcS_{\mcX\cup\mcA}$:
	It is trivial due to the definition of $\bu$ and $\mcS_{\mcX\cup\mcA}$.
\end{enumerate}
Similarly to $\bu$, we define $\mathbf{v}$ and $\mathbf{w} \in \BbbR^{|\mfB|}$ by
\begin{align*}
& \left.\mathbf{v}\right|_{Q_{ijk}}
:=
	v^{\mcY}_{ijk} \mcY
	+
	v^{\mcA}_{ijk} \mcA
	\quad \text{where} \quad
	v^{\mcY}_{ijk} = c^{\mcY}_{ijk},
	v^{\mcA}_{ijk} =	(-1)^{i+k} c^{\mcA}_{1j1}, \\
& \left.\mathbf{w}\right|_{Q_{ijk}}
:=
	w^{\mcZ}_{ijk} \mcZ
	+
	w^{\mcA}_{ijk} \mcA
	\quad \text{where} \quad
	w^{\mcZ}_{ijk} = c^{\mcZ}_{ijk},
	w^{\mcA}_{ijk} = 	(-1)^{i+j} c^{\mcA}_{11k}.
\end{align*}
Then both $\mathbf{v}$ and $\mathbf{w}$ are well-defined, 
and $\mathbf{v} \in \mcS_{\mcY\cup\mcA}$,
$\mathbf{w} \in \mcS_{\mcZ\cup\mcA}$.
Thus
$\mathbf{c} - (\bu + \mathbf{v} + \mathbf{w}) \in \mcS_{\mcX\cup\mcY\cup\mcZ\cup\mcA}$.
We can conclude $\mathbf{c} - (\bu + \mathbf{v} + \mathbf{w}) \in \mcS_{\mcA}$
since for each $Q_{ijk}$,
\begin{align*}
\left.\mathbf{c} - (\bu + \mathbf{v} + \mathbf{w})\right|_{Q_{ijk}}
= \left(c^{\mcA}_{ijk} - (-1)^{j+k} c^{\mcA}_{i11} - (-1)^{i+k} c^{\mcA}_{1j1} - (-1)^{i+j} c^{\mcA}_{11k}\right) \mcA.
\end{align*}
\end{proof}

\begin{corollary}\label{cor:dim-equation}
$\dim \ker B^{\mfB}_h 
= \dim \mcS_{\mcX\cup\mcA} 
+ \dim \mcS_{\mcY\cup\mcA}
+ \dim \mcS_{\mcZ\cup\mcA}
- 2 \dim \mcS_{\mcA}$.
\end{corollary}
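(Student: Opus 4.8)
The plan is to read off \corref{cor:dim-equation} as a one-line dimension count from the Decomposition Theorem \thmref{thm:quotient-space-decomposition}, once the quotients appearing there are converted into honest differences of dimensions. First I would record the inclusions already verified while proving \thmref{thm:quotient-space-decomposition}, namely
$$\mcS_{\mcA}\subset\mcS_{\mcX\cup\mcA},\ \mcS_{\mcY\cup\mcA},\ \mcS_{\mcZ\cup\mcA}\subset\mcS_{\mcX\cup\mcY\cup\mcZ\cup\mcA},$$
all sitting inside the finite-dimensional space $\BbbR^{|\mfB|}$, so that for each $\mathcal D\in\{\mcX\cup\mcA,\ \mcY\cup\mcA,\ \mcZ\cup\mcA,\ \mcX\cup\mcY\cup\mcZ\cup\mcA\}$ one has the elementary identity $\dim(\mcS_{\mathcal D}/\mcS_{\mcA})=\dim\mcS_{\mathcal D}-\dim\mcS_{\mcA}$.

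Next I would take dimensions on both sides of the internal direct sum in \thmref{thm:quotient-space-decomposition}. Since dimension is additive over a direct sum,
$$\dim(\mcS_{\mcX\cup\mcY\cup\mcZ\cup\mcA}/\mcS_{\mcA})=\dim(\mcS_{\mcX\cup\mcA}/\mcS_{\mcA})+\dim(\mcS_{\mcY\cup\mcA}/\mcS_{\mcA})+\dim(\mcS_{\mcZ\cup\mcA}/\mcS_{\mcA}).$$
Substituting the four identities from the previous step and invoking the remark that $\ker B^{\mfB}_h=\mcS_{\mcX\cup\mcY\cup\mcZ\cup\mcA}$ turns this into
$$\dim\ker B^{\mfB}_h-\dim\mcS_{\mcA}=\bigl(\dim\mcS_{\mcX\cup\mcA}-\dim\mcS_{\mcA}\bigr)+\bigl(\dim\mcS_{\mcY\cup\mcA}-\dim\mcS_{\mcA}\bigr)+\bigl(\dim\mcS_{\mcZ\cup\mcA}-\dim\mcS_{\mcA}\bigr),$$
and moving a single copy of $\dim\mcS_{\mcA}$ across gives precisely $\dim\ker B^{\mfB}_h=\dim\mcS_{\mcX\cup\mcA}+\dim\mcS_{\mcY\cup\mcA}+\dim\mcS_{\mcZ\cup\mcA}-2\dim\mcS_{\mcA}$.

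There is essentially no obstacle at this stage; all the genuine work was done in \thmref{thm:quotient-space-decomposition}. The only points worth a sentence of care are that the decomposition there is an \emph{internal} direct sum (so dimensions add with no correction term) and that the quotient-dimension formula requires the inclusion $\mcS_{\mcA}\subset\mcS_{\mathcal D}$, which is exactly what was checked in that proof. If one preferred to bypass quotient spaces altogether, the identical count can be done directly: the theorem says that every element of $\ker B^{\mfB}_h$ has, modulo $\mcS_{\mcA}$, a unique representation $\bu+\mathbf v+\mathbf w$ with the summands in $\mcS_{\mcX\cup\mcA}$, $\mcS_{\mcY\cup\mcA}$, $\mcS_{\mcZ\cup\mcA}$ and all pairwise intersections equal to $\mcS_{\mcA}$, and tallying the free parameters in this normal form reproduces the stated formula. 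I would present the quotient version, as it is the shortest.
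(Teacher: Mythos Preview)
Your proposal is correct and is exactly the intended argument: the paper states \corref{cor:dim-equation} immediately after \thmref{thm:quotient-space-decomposition} with no separate proof, so the corollary is meant to be read off by taking dimensions of the direct-sum decomposition of quotient spaces and using $\ker B^{\mfB}_h=\mcS_{\mcX\cup\mcY\cup\mcZ\cup\mcA}$, precisely as you do.
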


The following lemmas explain the dimension of subspaces
which depends on parity of the discretization numbers.

\begin{lemma}\label{lem:dim-s-xa}
(The dimension of $\mcS_{\mcX\cup\mcA}$, $\mcS_{\mcY\cup\mcA}$, $\mcS_{\mcZ\cup\mcA}$)
\begin{subeqnarray}
\dim \mcS_{\mcX\cup\mcA} &=& N_x \eps(N_y)\eps(N_z), \\
\dim \mcS_{\mcY\cup\mcA} &=&\eps(N_x) N_y\eps(N_z), \\
\dim \mcS_{\mcZ\cup\mcA} &=&\eps(N_x) \eps(N_y)N_z.
\end{subeqnarray}
\end{lemma}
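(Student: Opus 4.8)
The plan is to prove the first identity $\dim\mcS_{\mcX\cup\mcA}=N_x\,\eps(N_y)\eps(N_z)$ and to deduce the remaining two from the cyclic symmetry of the three coordinate directions: the relations of \lemref{lem:relation-s-xyza} are invariant under the simultaneous permutation $(x,\mcX)\to(y,\mcY)\to(z,\mcZ)\to(x,\mcX)$ with $\mcA$ held fixed, so the treatment of $\mcS_{\mcY\cup\mcA}$ and $\mcS_{\mcZ\cup\mcA}$ is word-for-word that of $\mcS_{\mcX\cup\mcA}$ after a cyclic relabelling of $(N_x,N_y,N_z)$.

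Since the four patterns $\mcX,\mcY,\mcZ,\mcA$ on the eight vertices of a cube are linearly independent, an element $\mathbf c\in\mcS_{\mcX\cup\mcA}$ is exactly a family of coefficients $\{c^{\mcX}_{ijk},c^{\mcA}_{ijk}\}$ (with $c^{\mcY}\equiv c^{\mcZ}\equiv 0$) satisfying the specialization of \lemref{lem:relation-s-xyza} obtained by setting $c^{\mcY}=c^{\mcZ}=0$, namely
\begin{align*}
c^{\mcX}_{ijk}-c^{\mcA}_{ijk}&=c^{\mcX}_{(i+1)jk}+c^{\mcA}_{(i+1)jk},\\
c^{\mcA}_{i(j+1)k}=-c^{\mcA}_{ijk},\quad c^{\mcA}_{ij(k+1)}=-c^{\mcA}_{ijk},\quad
c^{\mcX}_{i(j+1)k}&=-c^{\mcX}_{ijk},\quad c^{\mcX}_{ij(k+1)}=-c^{\mcX}_{ijk},
\end{align*}
for all indices, read modulo $N_x,N_y,N_z$; conversely, by \rmkref{rmk:well-definedness-s-xyza} any such family determines a well-defined vector in $\mcS_{\mcX\cup\mcA}$. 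First I would dispose of the case in which $N_y$ or $N_z$ is odd: iterating the sign relation around the periodic $j$-loop gives $c^{\mcA}_{ijk}=(-1)^{N_y}c^{\mcA}_{ijk}$ and $c^{\mcX}_{ijk}=(-1)^{N_y}c^{\mcX}_{ijk}$, and likewise with $N_z$ along $k$; hence all coefficients vanish, so $\mcS_{\mcX\cup\mcA}=\{0\}$ and $\dim\mcS_{\mcX\cup\mcA}=0=N_x\,\eps(N_y)\eps(N_z)$ because $\eps$ vanishes at odd arguments.

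It remains to treat the case $N_y,N_z$ both even. There the $j$- and $k$-sign relations force the separation of variables $c^{\mcA}_{ijk}=(-1)^{j+k}a_i$ and $c^{\mcX}_{ijk}=(-1)^{j+k}x_i$, where $a_i:=c^{\mcA}_{i11}$ and $x_i:=c^{\mcX}_{i11}$, and periodicity in $j$ and $k$ is then automatic since $N_y$ and $N_z$ are even. Substituting into the $x$-direction relation and cancelling the common factor $(-1)^{j+k}$ leaves the single cyclic recursion $x_{i+1}-x_i=-(a_i+a_{i+1})$ for $1\le i\le N_x$. Summing this around the loop telescopes the left-hand side to zero and forces $\sum_{i=1}^{N_x}a_i=0$; conversely, given any $(a_i)$ with $\sum_i a_i=0$ and any value of $x_1$, the recursion determines $x_2,\dots,x_{N_x}$ and the loop closes consistently (a one-line check using $\sum_i a_i=0$). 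Hence $\mathbf c\mapsto\bigl((a_i)_{i=1}^{N_x},x_1\bigr)$ is a linear isomorphism from $\mcS_{\mcX\cup\mcA}$ onto $\{(a_i)\in\BbbR^{N_x}:\sum_i a_i=0\}\times\BbbR$, of dimension $(N_x-1)+1=N_x$, which proves the claim.

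The whole computation is routine bookkeeping; the one point requiring care is the cyclic consistency of the $x$-direction recursion, which is precisely where the single constraint $\sum_i a_i=0$ — the loss of one degree of freedom among the $\mcA$-coefficients — comes from.
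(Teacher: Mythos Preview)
Your argument is correct and follows essentially the same route as the paper: reduce to $\mcS_{\mcX\cup\mcA}$ by symmetry, use the sign relations in $j$ and $k$ to force the $(-1)^{j+k}$ separation of variables (which kills everything when $N_y$ or $N_z$ is odd), and then count the remaining degrees of freedom in the even case. The only cosmetic difference is in that final count: the paper parametrizes by the checkerboard factor on each of the $N_x+1$ nodal surfaces perpendicular to $x$ (with the first and last identified), arriving at $\{\mathbf v\in\BbbR^{N_x+1}:v_1=v_{N_x+1}\}\cong\BbbR^{N_x}$, whereas you stay with the cell coefficients $(a_i,x_i)$ and extract the single cyclic constraint $\sum_i a_i=0$ to get $(N_x-1)+1=N_x$; both are equivalent bookkeeping.
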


\begin{proof}
It is enough to show the claim for $\mcS_{\mcX\cup\mcA}$,
since the others are similar.
Let $\mathbf{c} \in \mcS_{\mcX\cup\mcA}$
where
$\left.\mathbf{c}\right|_{Q_{ijk}} = c^{\mcX}_{ijk} \mcX + c^{\mcA}_{ijk} \mcA$
in each cube $Q_{ijk}\in\Tau_h$.
By applying the matching conditions \eqref{eq:relation-y-direction} and \eqref{eq:relation-z-direction} consecutively,
it can be shown
\begin{align*}
c^{\mcX}_{ijk} = (-1)^{j+k} c^{\mcX}_{i11}
\quad \text{and} \quad
c^{\mcA}_{ijk} = (-1)^{j+k} c^{\mcA}_{i11}.
\end{align*}
Consider $N_x+1$ combined surfaces such that
each of them consists of $N_y \times N_z$ faces in $\Tau_h$, and
is lying on the same hyperplane perpendicular to the $x$-axis.
The above relations imply that 
on each surface the coefficients for node--based functions
are all the same, but with alternating sign like a checkerboard pattern at nodes, not on faces.
Due to the identification between boundary nodes in the $y$- and $z$-directions,
all coefficients vanish unless both $N_y$ and $N_z$ are even.

Under the case of even $N_y$ and $N_z$,
we consider a basis checkerboard pattern at nodes on a combined surface consisting of $+1$ and $-1$, alternatively,
as \figref{fig:cube-combined-surface} (a) shows.
In the figure, the plus and minus sign at nodes represent
the positive value one, and the negative value one, respectively.
We get $N_x+1$ checkerboard patterns on $N_x+1$ combined surfaces in series (\figref{fig:cube-combined-surface} (b)).
Based on the basis checkerboard pattern described above,
we can represent all coefficients on each combined surface by a single factor in real number.
Due to the identification between boundary nodes in the $x$-direction,
factors for the first and the last combined surface must be same.
Then the series of $N_x+1$ checkerboard patterns compose
a global representation for a function in $\mcS_{\mcX\cup\mcA}$ (\figref{fig:cube-combined-surface} (c)).
Conversely, for the $N_x+1$ combined surfaces which are perpendicular
to the $x$-axis
and the basis checkerboard pattern at nodes on surfaces,
suppose $N_x+1$ factors are given, where the first and the last of them are same.
Then we can determine unique $c^{\mcX}_{ijk}$ and $c^{\mcA}_{ijk}$,
for all $Q_{ijk}\in\Tau_h$.
Therefore, only in the case when both $N_y$ and $N_z$ are even,
$\mcS_{\mcX\cup\mcA}$ is equivalent to
$\{ \mathbf{v} \in \BbbR^{N_x+1} ~|~ v_1 = v_{N_x+1} \}$
and $\dim \mcS_{\mcX\cup\mcA} = N_x$ consequently.
\end{proof}

\begin{figure}[!ht]
\centering
\epsfig{figure=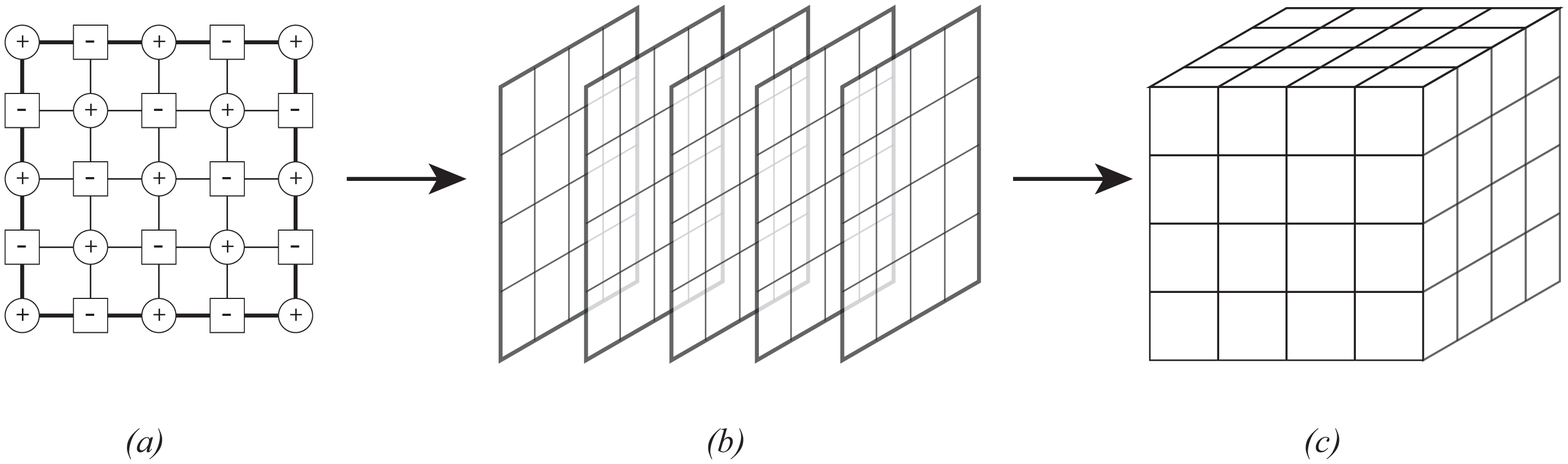,width=0.6\textwidth}
 \caption{Construction of a global representation for a function in $\mcS_{\mcX\cup\mcA}$}
 \label{fig:cube-combined-surface}
\end{figure}

\begin{lemma}\label{lem:dim-s-a}
(The dimension of $\mcS_{\mcA}$)
\begin{align}
\dim \mcS_{\mcA} = \eps(N_x)\eps(N_y)\eps(N_z).
\end{align}
\end{lemma}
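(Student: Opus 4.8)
The plan is to specialize the matching conditions already proved in \lemref{lem:relation-s-xyza} to vectors that are locally just $\mcA$. First I would observe that $\mcS_{\mcA}\subset\mcS_{\mcX\cup\mcY\cup\mcZ\cup\mcA}$: by definition every $\mathbf{c}\in\mcS_{\mcA}$ satisfies $\left.\mathbf{c}\right|_{Q_{ijk}}=c^{\mcA}_{ijk}\mcA$ for scalars $c^{\mcA}_{ijk}$, i.e.\ in the notation of \lemref{lem:relation-s-xyza} it has $c^{\mcX}_{ijk}=c^{\mcY}_{ijk}=c^{\mcZ}_{ijk}=0$ for all cells. Substituting these vanishing coefficients into \eqref{eq:relation-x-direction}--\eqref{eq:relation-z-direction} makes every equation involving $\mcX$, $\mcY$, or $\mcZ$ trivial and leaves exactly
\begin{align*}
c^{\mcA}_{(i+1)jk}=-c^{\mcA}_{ijk},\qquad
c^{\mcA}_{i(j+1)k}=-c^{\mcA}_{ijk},\qquad
c^{\mcA}_{ij(k+1)}=-c^{\mcA}_{ijk}
\end{align*}
for all $1\le i\le N_x$, $1\le j\le N_y$, $1\le k\le N_z$, with indices understood modulo $N_x$, $N_y$, $N_z$.

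Next I would read off the dimension from this recursion. Iterating the three sign-flips forces $c^{\mcA}_{ijk}=(-1)^{(i-1)+(j-1)+(k-1)}c^{\mcA}_{111}$, so $\mathbf{c}$ is determined by the single scalar $c^{\mcA}_{111}$ and hence $\dim\mcS_{\mcA}\le 1$. The periodic identification of boundary nodes is precisely where the parities enter: travelling once around the topological $x$-direction gives $c^{\mcA}_{111}=c^{\mcA}_{(N_x+1)11}=(-1)^{N_x}c^{\mcA}_{111}$, and likewise $(-1)^{N_y}c^{\mcA}_{111}=c^{\mcA}_{111}$ and $(-1)^{N_z}c^{\mcA}_{111}=c^{\mcA}_{111}$. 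Consequently, if any one of $N_x$, $N_y$, $N_z$ is odd then $c^{\mcA}_{111}=0$, so $\mcS_{\mcA}=\{\mathbf{0}\}$ and $\dim\mcS_{\mcA}=0=\eps(N_x)\eps(N_y)\eps(N_z)$.

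Finally, when $N_x$, $N_y$, $N_z$ are all even I would exhibit one nonzero element to pin the dimension at $1$: define $\mathbf{c}$ cell-by-cell by $\left.\mathbf{c}\right|_{Q_{ijk}}:=(-1)^{i+j+k}\mcA$ (zero $\mcX$, $\mcY$, $\mcZ$ coefficients). By \rmkref{rmk:well-definedness-s-xyza} it suffices to check that these coefficients satisfy the local relations \eqref{eq:relation-x-direction}--\eqref{eq:relation-z-direction}, which is immediate from $(-1)^{(i+1)+j+k}=-(-1)^{i+j+k}$, while the even parity of each $N_\bullet$ makes the assignment consistent across the periodic wraparound. This $\mathbf{c}$ lies in $\mcS_{\mcA}$ and is nonzero, so $\dim\mcS_{\mcA}=1=\eps(N_x)\eps(N_y)\eps(N_z)$, completing the argument. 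The only point requiring care is the bookkeeping of the periodic identifications, since that is what converts the three sign-flip recursions into parity constraints; once \lemref{lem:relation-s-xyza} and \rmkref{rmk:well-definedness-s-xyza} are in hand there is no genuine obstacle.
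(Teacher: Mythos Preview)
Your proof is correct and follows essentially the same approach as the paper: specialize the matching conditions of \lemref{lem:relation-s-xyza} to $c^{\mcX}=c^{\mcY}=c^{\mcZ}=0$, deduce the three sign-flip recursions forcing $c^{\mcA}_{ijk}=(-1)^{i+j+k+1}c^{\mcA}_{111}$, and then read off the parity constraints from periodicity. The paper is terser (it omits the explicit construction via \rmkref{rmk:well-definedness-s-xyza} and just says the coefficients form a multiple of the 3D checkerboard pattern), but the argument is the same.
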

\begin{proof}
Let $\mathbf{c} \in \mcS_{\mcA}$
where
$\left.\mathbf{c}\right|_{Q_{ijk}} = c^{\mcA}_{ijk} \mcA$
in each cube $Q_{ijk}$.
By applying the matching conditions \eqref{eq:relation-x-direction}--\eqref{eq:relation-z-direction} consecutively,
it is shown
$
c^{\mcA}_{ijk} = (-1)^{i+j+k+1} c^{\mcA}_{111}.
$
Due to the identification of boundary nodes in the $x$-, $y$-, and $z$-directions,
all coefficients vanish unless all $N_x$, $N_y$ and $N_z$ are even.
In the case of all even $N_x$, $N_y$ and $N_z$,
it is easily shown that the coefficients form
a multiple of {\it the 3D checkerboard pattern} at nodes.
Therefore $\dim \mcS_{\mcA} = 1$.
\end{proof}

The following proposition is
a direct consequence of \corref{cor:dim-equation}, \lemrefs{lem:dim-s-xa}{lem:dim-s-a}.
\begin{proposition}\label{prop:dim-ker-3d}
(The dimensions of $\ker B^{\mfB}_h$, $V^{\mfB,h}_{\#}$ in 3D)
\begin{eqnarray*}
\dim \ker B^{\mfB}_h
&=&N_x\eps(N_y)\eps(N_z)+\eps(N_x)N_y\eps(N_z)+\eps(N_x)\eps(N_y)N_z
  -2 \eps(N_x)\eps(N_y)\eps(N_z),\\
\dim V^{\mfB,h}_{\#} &=& N_x N_y N_z -
  \left[N_x\eps(N_y)\eps(N_z)+\eps(N_x)N_y\eps(N_z)+\eps(N_x)\eps(N_y)N_z\right.\\
&&\left. \qquad\qquad  -2 \eps(N_x)\eps(N_y)\eps(N_z)\right].
\end{eqnarray*}
\end{proposition}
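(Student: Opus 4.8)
The plan is to read off both formulas from the structural results already in hand. First I would note that, by construction, $B^{\mfB}_h\colon\BbbR^{|\mfB|}\to V^{\mfB,h}_{\#}$ is surjective (its image is $\Span\mfB = V^{\mfB,h}_{\#}$), so the rank--nullity theorem gives $\dim V^{\mfB,h}_{\#} = |\mfB| - \dim\ker B^{\mfB}_h$. Since the periodic identification of opposite boundary faces, of opposite boundary edges, and of the eight corners leaves exactly $N_x N_y N_z$ distinct node--based functions, we have $|\mfB| = N_x N_y N_z$, and therefore the second asserted identity will follow directly from the first.

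For the first identity, I would invoke \corref{cor:dim-equation}, which already reduces the computation to
\[
\dim\ker B^{\mfB}_h = \dim\mcS_{\mcX\cup\mcA} + \dim\mcS_{\mcY\cup\mcA} + \dim\mcS_{\mcZ\cup\mcA} - 2\,\dim\mcS_{\mcA}.
\]
Then I would substitute the parity-dependent values established in \lemref{lem:dim-s-xa}, namely $\dim\mcS_{\mcX\cup\mcA} = N_x\eps(N_y)\eps(N_z)$, $\dim\mcS_{\mcY\cup\mcA} = \eps(N_x)N_y\eps(N_z)$, and $\dim\mcS_{\mcZ\cup\mcA} = \eps(N_x)\eps(N_y)N_z$, together with $\dim\mcS_{\mcA} = \eps(N_x)\eps(N_y)\eps(N_z)$ from \lemref{lem:dim-s-a}. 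Collecting terms produces exactly the claimed expression for $\dim\ker B^{\mfB}_h$, and hence, via rank--nullity, the claimed expression for $\dim V^{\mfB,h}_{\#}$.

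There is no real obstacle at this stage: the substantive work has already been carried out in the Decomposition Theorem \thmref{thm:quotient-space-decomposition} (which yields \corref{cor:dim-equation}) and in the parity-dependent dimension counts of \lemrefs{lem:dim-s-xa}{lem:dim-s-a}. The only point deserving explicit mention is the bookkeeping $|\mfB| = N_x N_y N_z$, since it is what translates the kernel dimension into $\dim V^{\mfB,h}_{\#}$; I would recall it before applying rank--nullity so that the passage from the first formula to the second is transparent. One could also sanity-check the result against the 2D analogue (\propref{prop:dim-ker}): setting $N_z$ formally to reflect a single layer should be consistent with the 2D count, and the all-even, mixed-parity, and all-odd cases of the 3D formula should each collapse to the expected values, which serves as a useful verification.
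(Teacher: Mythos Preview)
Your proposal is correct and follows precisely the paper's own approach: the paper states that the proposition is a direct consequence of \corref{cor:dim-equation} together with \lemrefs{lem:dim-s-xa}{lem:dim-s-a}, and the passage to $\dim V^{\mfB,h}_{\#}$ via rank--nullity with $|\mfB|=N_xN_yN_z$ is exactly the argument used earlier in the 2D case (\propref{prop:dim-ker}). There is nothing to add.
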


\subsection{A basis for $V^h_{\#}$ in 3D}

\proprefs{prop:dim-periodic-3d}{prop:dim-ker-3d} imply that
$V^{\mfB,h}_{\#}$ is a proper subset of $V^h_{\#}$
if at most one of $N_x$, $N_y$, and $N_z$ is odd.
Furthermore,
if all $N_x$, $N_y$, and $N_z$ are even,
then there exist $2(N_x + N_y + N_z)-3$ complementary basis functions for $V^h_{\#}$,
not belonging to $V^{\mfB,h}_{\#}$.
If only $N_\iota$ is odd,
then the number of complementary basis functions for $V^h_{\#}$ is $2 N_\iota$.
In other cases,
$V^{\mfB,h}_{\#}$ is equal to $V^h_{\#}$.
We will discuss about the complementary basis functions below.

\begin{theorem}\label{thm:complementary-basis-3d}(A complementary basis for $V^h_{\#}$ in 3D)
Suppose a rectangular domain $\O$ is given with a triangulation $\Tau_h$,
consisting of cubes, in which the number of elements along coordinates are $N_x$, $N_y$, $N_z$.
For given $1 \le i \le N_x$,
let $\O^x_i$ be the subdomain consisting of $N_y \times N_z$ cubes
whose discrete position in $x$--coordinate are all same to $i$.
Let $(\psi^x_i)_y$ denote a piecewise linear function in $V^h_{\#}$,
whose support is $\O^x_i$, such that
it has nonzero barycenter values only on faces perpendicular to the $y$-axis,
and all the nonzero barycenter values are $1$ with alternating
sign in the $y$- and $z$-directions.
We can consider $(\psi^x_i)_z$, $(\psi^y_j)_x$, $(\psi^y_j)_z$, $(\psi^z_k)_x$, $(\psi^z_k)_y$
in similar manner.
The followings hold.
\begin{enumerate}
\item If all $N_x$, $N_y$, and $N_z$ are even,
then $V^{\mfB,h}_{\#}$ is a proper subset of $V^h_{\#}$.
The union of
\vspace{-0.25cm}
\begin{itemize}
\item any $N_x + N_y -1$ among $\mfA_z := \{(\psi^x_i)_z, (\psi^y_j)_z\}_{1\le i \le N_x, 1\le j \le N_y}$,
\item any $N_y + N_z -1$ among $\mfA_x := \{(\psi^y_j)_x, (\psi^z_k)_x\}_{1\le j \le N_y, 1\le k \le N_z}$, and
\item any $N_z + N_x -1$ among $\mfA_y := \{(\psi^z_k)_y, (\psi^x_i)_y\}_{1\le i \le N_x, 1\le k \le N_z}$
\end{itemize}
is a complementary basis for $V^h_{\#}$, not belonging to $V^{\mfB,h}_{\#}$.
\item If only $N_\iota$ is odd (and $N_\mu, N_\nu$ are even),
then $V^{\mfB,h}_{\#}$ is a proper subset of $V^h_{\#}$.
Moreover, $\{(\psi^\iota_j)_\mu, (\psi^\iota_j)_\nu\}_{1\le j \le N_\iota}$
is a complementary basis for $V^h_{\#}$, which is not contained in $V^{\mfB,h}_{\#}$.
\item Otherwise, $V^{\mfB,h}_{\#} = V^h_{\#}$.
\end{enumerate}
\end{theorem}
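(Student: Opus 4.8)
The plan is to reduce \thmref{thm:complementary-basis-3d} to a dimension count together with an explicit description of the quotient $V^h_{\#}/V^{\mfB,h}_{\#}$ via a family of alternating ``slab functionals''. First I would set $\delta:=\dim V^h_{\#}-\dim V^{\mfB,h}_{\#}$ and read off from \proprefs{prop:dim-periodic-3d}{prop:dim-ker-3d} that $\delta=2(N_x+N_y+N_z)-3$ when $N_x,N_y,N_z$ are all even, $\delta=2N_\iota$ when exactly $N_\iota$ is odd, and $\delta=0$ otherwise. Because $V^{\mfB,h}_{\#}\subseteq V^h_{\#}$ always holds, the last case already forces $V^{\mfB,h}_{\#}=V^h_{\#}$, which is part~(3); in the remaining two cases it suffices to exhibit a linearly independent set of exactly $\delta$ functions in $V^h_{\#}$ whose images span $V^h_{\#}/V^{\mfB,h}_{\#}$ and which has the shape prescribed in~(1), resp.~(2).

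Next I would check that the listed functions really lie in $V^h_{\#}$, and that the parity hypotheses select exactly the admissible ones. On each cube $(\psi^a_i)_b$ is linear with nonzero barycenter values $\pm1$ on one opposite pair of faces, summing to $0$, and $0$ on the other two pairs; hence both dice relations hold cellwise, and midpoint continuity across interior faces is automatic. The periodic condition then amounts to requiring that the two \emph{boundary} face pairs carrying nonzero values be matched, which happens precisely when the two discretization numbers transverse to the slab direction $a$ are even. Therefore, if all $N_x,N_y,N_z$ are even every $(\psi^a_i)_b$ with $a\neq b$ is admissible; if only $N_\iota$ is odd the admissible functions are exactly $\{(\psi^\iota_j)_\mu,(\psi^\iota_j)_\nu\}_{1\le j\le N_\iota}$; and otherwise there are none. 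This reconfirms~(3) and provides the correct pool of candidates for~(1) and~(2).

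The core of the argument is a functional computation. To each admissible $(\psi^a_i)_b$ attach the linear functional $J^b_{a,i}$ on $V^h$ equal to the $(\psi^a_i)_b$-signed sum of barycenter values over the faces perpendicular to $b$ contained in the slab $\O^a_i$. The key point, proved by a two-term cancellation, is that $J^b_{a,i}(\phi_z)=0$ for every node-based function $\phi_z$: a vertex lying in $\O^a_i$ meets exactly two such faces, adjacent along the slab and hence carrying opposite signs in the alternating pattern, so their $\frac12$-contributions cancel (this uses that the transverse directions have $\ge2$ elements, which the parity hypotheses guarantee). Thus every $J^b_{a,i}$ annihilates $\Span\mfB=V^{\mfB,h}_{\#}$, so the assembled map $\mathbf J$ satisfies $\ker\mathbf J\supseteq V^{\mfB,h}_{\#}$ and $\operatorname{rank}\mathbf J\le\delta$. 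Conversely I would evaluate the matrix $\big[J^b_{a,i}\big((\psi^{a'}_{i'})_{b'}\big)\big]$ on the admissible functions: it is block diagonal in the orientation index $b$ (a functional of orientation $b$ vanishes on every $\psi$ of a different orientation, whose relevant face values are $0$), and in the all-even case each diagonal block has, up to relabelling, the form $\left(\begin{smallmatrix} N_yN_z\,I & \gamma\,\mathbf s\mathbf t^{\!\top}\\ \gamma\,\mathbf t\mathbf s^{\!\top} & N_xN_z\,I\end{smallmatrix}\right)$, where $\gamma=\pm N_z$ and $\mathbf s,\mathbf t$ are the all-$\pm1$ alternating vectors with entries $(-1)^i$, $(-1)^j$; the rank-one off-diagonal coupling $\gamma(-1)^{i+j}$ arises because on the overlap of the two slabs (a single column of cells) the two alternating patterns multiply to a sign depending only on $i,j$. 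A short Schur-complement determinant computation gives $\det=0$ with a one-dimensional kernel spanned by a vector of full support; in the lone-odd case the relevant blocks are instead diagonal and of full rank. It follows that $\operatorname{rank}\mathbf J=\delta$, hence $\ker\mathbf J=V^{\mfB,h}_{\#}$, and---since the kernel of each block has no zero coordinate---deleting any single column from a block leaves a linearly independent set. Consequently any $N_x+N_y-1$ functions chosen from $\mfA_z$ (resp. $N_y+N_z-1$ from $\mfA_x$, $N_z+N_x-1$ from $\mfA_y$), or the full family $\{(\psi^\iota_j)_\mu,(\psi^\iota_j)_\nu\}_j$ in the lone-odd case, are carried by $\mathbf J$ onto a linearly independent set, hence are linearly independent modulo $V^{\mfB,h}_{\#}$; having exactly $\delta$ elements, they form a complementary basis for $V^h_{\#}$, and each member---being independent modulo $V^{\mfB,h}_{\#}$---lies outside $V^{\mfB,h}_{\#}$.

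The step I expect to be the main obstacle is this block-matrix analysis: pinning down the single dependency inside each of $\mfA_x,\mfA_y,\mfA_z$ in the all-even case, and proving that its coefficient vector has full support---exactly what justifies the word ``any'' in~(1). Getting the sign conventions, the face counts ($N_yN_z$ on the diagonal and $\gamma(-1)^{i+j}$ off it), and the orientation-block structure right is the delicate part; the rest is bookkeeping. As an independent check one can verify $N_x+N_y-1=\dim\mcS_{\mcX\cup\mcA}+\dim\mcS_{\mcY\cup\mcA}-\dim\mcS_{\mcA}$ and its cyclic analogues using \corref{cor:dim-equation} and \lemrefs{lem:dim-s-xa}{lem:dim-s-a}.
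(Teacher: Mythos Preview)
Your approach is correct and takes a genuinely different route from the paper. The paper's proof is geometric and terse: it invokes the 2D argument to assert that each $(\psi^a_i)_b$ is well-defined and lies outside $V^{\mfB,h}_{\#}$, then identifies the single linear relation in each orientation block directly (``an alternating sum of $(\psi^x_i)_z$ over $i$ equals that of $(\psi^y_j)_z$ over $j$'') and asserts linear independence of any $N_x+N_y-1$ of them ``due to their supports.'' Your construction of the dual functionals $J^b_{a,i}$, together with the Schur-complement rank computation, is more systematic and in fact more honest on the crucial ``any'' claim in part~(1): the paper's support argument is hand-wavy since $\Omega^x_i$ and $\Omega^y_j$ overlap, whereas your observation that the one-dimensional kernel of each orientation block has full support (being spanned by $(\pm\mathbf s,\mathbf t)$) is precisely what justifies that \emph{any} single deletion preserves independence. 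What the paper's approach buys is brevity and an explicit identification of the dependency as an equality of functions in $V^h_{\#}$ (not merely modulo $V^{\mfB,h}_{\#}$); what yours buys is a clean separation of ``annihilates $V^{\mfB,h}_{\#}$'' from ``detects the $\psi$'s'', and a transparent reason why the number of deletable functions in each block is exactly one.

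One minor imprecision that does not damage your argument: your admissibility criterion is too strong. A direct cell-by-cell check shows $(\psi^a_i)_b\in V^h_{\#}$ already when $N_b$ alone is even (the dice rule and midpoint continuity across faces perpendicular to the third direction impose no parity constraint, since those face values are~$0$). Thus, e.g., when only $N_x$ is odd, $(\psi^y_j)_z$ and $(\psi^z_k)_y$ are also well-defined; they simply lie in $V^{\mfB,h}_{\#}+\Span\{(\psi^x_i)_\mu,(\psi^x_i)_\nu\}$ and are not needed. Your proof is unaffected because you only use that the functions listed in the theorem are admissible (they are) and that the functionals you actually invoke---$J^\mu_{\iota,j},J^\nu_{\iota,j}$ in case~(2), all $J^b_{a,i}$ in case~(1)---annihilate $V^{\mfB,h}_{\#}$ via your two-face cancellation, which only requires the relevant transverse $N$'s to be~$\ge 2$.
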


\begin{proof}
For the first case, suppose that all $N_x$, $N_y$, and $N_z$ are even.
Note that all nonzero barycenter values of $(\psi^x_i)_y$ are lying on the faces perpendicular to only one axis
with alternating sign,
as similar to the alternating function $\psi_x$ in the 2D case (\figref{fig:cube-alternating-function} (a), (b)),
and its support is $\O^x_i$ (\figref{fig:cube-alternating-function} (c)).
Using a similar argument as in the 2D case,
it is easily shown that
$(\psi^x_i)_y$ is well-defined, and not belonging to $V^{\mfB,h}_{\#}$
since $N_y$ and $N_z$ are even.
A similar property holds for $(\psi^x_i)_z$, 
a piecewise linear function in $V^h_{\#}$ whose support is $\O^x_i$
and which has nonzero barycenter values as $1$ only on faces
perpendicular to the $z$-axis
with alternating sign in the $y$- and $z$-directions.
Thus
there exist $2N_x$ alternating functions, $\{(\psi^x_i)_y, (\psi^x_i)_z\}_{1\le i\le N_x}$,
for $V^h_{\#}$ associated with strips perpendicular to the $x$-axis.
By considering other strips perpendicular to the $y$- or $z$-axis,
we can find out $2(N_x + N_y + N_z)$ alternating functions for $V^h_{\#}$,
not belonging to $V^{\mfB,h}_{\#}$:
$\{(\psi^x_i)_y, (\psi^x_i)_z, (\psi^y_j)_x, (\psi^y_j)_z, (\psi^z_k)_x, (\psi^z_k)_y\}_{1\le i \le N_x, 1\le j \le N_y, 1\le k \le N_z}$.

However,
there is a single relation between the alternating functions in each direction on subscript.
An alternating sum of $(\psi^x_i)_z$ in $1\le i \le N_x$ is equal to
that of $(\psi^y_j)_z$ in $1\le j \le N_y$.
And any $N_x + N_y -1$ among all $(\psi^x_i)_z$ and $(\psi^y_j)_z$
are linearly independent
due to their supports.
Similarly,
any $N_y + N_z -1$ among all $(\psi^y_j)_x$ and $(\psi^z_k)_x$ are linearly independent,
and so any $N_z + N_x -1$ among all $(\psi^z_k)_y$ and $(\psi^x_i)_y$ are.
Consequently,
suitably chosen~$2(N_x + N_y + N_z) -3$ alternating functions form a complementary basis for $V^h_{\#}$.

In the case of only one odd $N_\iota$ (and even $N_\mu$, $N_\nu$),
the set of all alternating functions associated to the strips
perpendicular to the $\iota$-axis,
$\{(\psi^\iota_j)_\mu, (\psi^\iota_j)_\nu \}_{1\le j \le N_\iota}$, are meaningful
because $N_\mu$ and $N_\nu$ are even.
\end{proof}

\begin{figure}[!ht]
\centering
\epsfig{figure=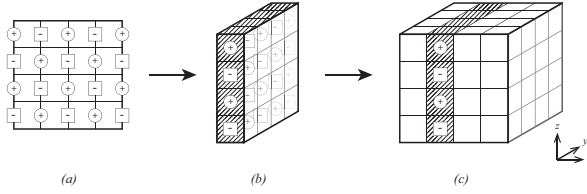,width=0.6\textwidth}
 \caption{Construction of an alternating function in 3D}
 \label{fig:cube-alternating-function}
\end{figure}

\subsection{Stiffness matrix associated with $\mfB$ in 3D}
The stiffness matrix $\bS^{\mfB}_h$ associated with $\mfB$ is defined as in \eqref{eq:def-stiffness-B} but in 3D space.
See \figref{fig:stencil-stiffness-matrix-3d} for the 3D local stencil for the stiffness matrix associated with $\mfB$.
\proprefs{prop:ker-decomposition}{prop:dim-ker-3d} lead the following proposition.

\begin{figure}[!ht]
\centering
\epsfig{figure=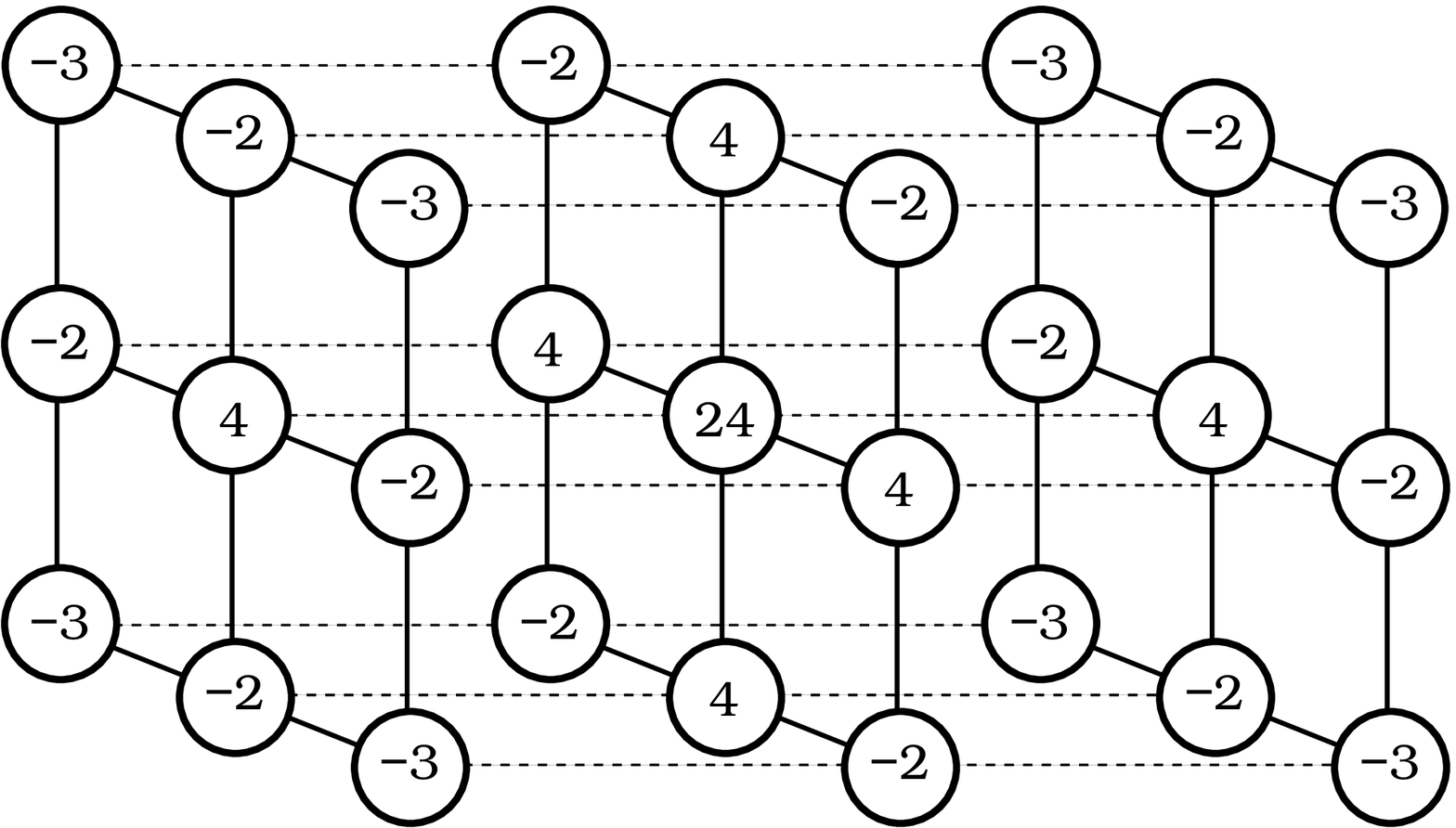,width=0.35\textwidth}
\caption{The stencil for $\bS^{\mfB}_h$ {with uniform cubes of
    size $h\times h\times h$ in 3D.}}
\label{fig:stencil-stiffness-matrix-3d}
\end{figure}

\begin{proposition}\label{prop:dim-ker-stiffness-3d}
(The dimension of $\ker \bS^{\mfB}_h$ in 3D)
\begin{eqnarray*}
\dim \ker \bS^{\mfB}_h
&= N_x\eps(N_y)\eps(N_z) + \eps(N_x)N_y\eps(N_z) +
                         \eps(N_x)\eps(N_y)N_z\\
&\qquad\qquad                         - 2 \eps(N_x)\eps(N_y)\eps(N_z) + 1.
\end{eqnarray*}
\end{proposition}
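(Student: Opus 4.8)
The plan is to treat this as a short corollary: the heavy combinatorial work has already been carried out in \propref{prop:dim-ker-3d}, and all that remains is to add the single ``constant mode'' that distinguishes $\ker \bS^{\mfB}_h$ from $\ker B^{\mfB}_h$. Concretely, I would deduce the formula from the direct-sum decomposition of $\ker \bS^{\mfB}_h$ established (stated in 2D in \propref{prop:ker-decomposition}, but declared valid verbatim in 3D by the Remark following it) together with the value of $\dim\ker B^{\mfB}_h$ from \propref{prop:dim-ker-3d}.

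First I would invoke \lemref{lem:ker-iff-constant} in its 3D form: a coefficient vector $\mathbf v$ lies in $\ker \bS^{\mfB}_h$ if and only if $\sum_j v_j\phi_j$ is constant on $\O$; in other words $\ker \bS^{\mfB}_h = (B^{\mfB}_h)^{-1}(\{\text{constant functions}\})$. Since $B^{\mfB}_h(\mathbf{1}_{\mfB})$ is a (nonzero) constant function, every constant is attained on $\Span\mathbf{1}_{\mfB}$, and $\Span\mathbf{1}_{\mfB}\cap\ker B^{\mfB}_h=\{0\}$; this is exactly \propref{prop:ker-decomposition} in 3D, giving $\ker \bS^{\mfB}_h = \ker B^{\mfB}_h \oplus \Span\mathbf{1}_{\mfB}$ and hence $\dim\ker \bS^{\mfB}_h = \dim\ker B^{\mfB}_h + 1$. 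I would then substitute $\dim\ker B^{\mfB}_h = N_x\eps(N_y)\eps(N_z)+\eps(N_x)N_y\eps(N_z)+\eps(N_x)\eps(N_y)N_z-2\eps(N_x)\eps(N_y)\eps(N_z)$ from \propref{prop:dim-ker-3d} and add $1$ to land on the stated expression.

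There is no genuine obstacle here; the only point requiring a moment of care is confirming that \lemref{lem:ker-iff-constant} and \propref{prop:ker-decomposition} do transfer to the 3D setting, which they do because their proofs are local and use only that two adjacent elements share a $(d-1)$--face at whose barycenter the two linear traces coincide. The substantive difficulty was encapsulated upstream, in the computation of $\dim\ker B^{\mfB}_h$ via the Decomposition Theorem \thmref{thm:quotient-space-decomposition} together with \lemrefs{lem:dim-s-xa}{lem:dim-s-a}; the present proposition merely records the consequence of appending $\Span\mathbf{1}_{\mfB}$.
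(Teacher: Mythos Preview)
Your argument is correct and matches the paper's own reasoning: the paper states just before the proposition that it follows from \propref{prop:ker-decomposition} and \propref{prop:dim-ker-3d}, which is precisely the decomposition-plus-one you describe. There is nothing to add.
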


Let us assemble $\bS^{\mfB}_h$ for various combinations of $N_x$, $N_y$, and $N_z$.
The rank deficiency is computed by using MATLAB.
\tabref{tab:numerical-rank-deficiency-stiffness} shows
numerically obtained rank deficiency of $\bS^{\mfB}_h$ in 3D space.
Without loss of generality, it only represents combinations which hold $N_x \ge N_y \ge N_z$.
Numbers in red, blue, and black represent the case of all even discretizations and
the case of odd discretization in only one direction, and the other
cases, respectively.
These results imply that the rank deficiency pattern depend on parity
combination and confirm our theoretical result in \propref{prop:dim-ker-stiffness-3d}.

\begin{table}[!ht]
  \footnotesize
  \centering
  \begin{tabular}{ c | c | c c c c c c c }
  \hline \hline
  \multicolumn{2}{c|}{\multirow{2}{*}{$N_z = {\bf 2}$}} & $N_y$ \\ \cline{3-9}
  \multicolumn{2}{c|}{} & {\bf 2} & 3 & {\bf 4} & 5 & {\bf 6} & 7 & {\bf 8} \\ \hline
  $N_x$ & {\bf 2} & {\bf\color{red} 5} \\ \cline{2-9}
  & 3 & {\bf\color{blue} 4} & 1 \\ \cline{2-9}
  & {\bf 4} & {\bf\color{red} 7} & {\bf\color{blue} 4} & {\bf\color{red} 9} \\ \cline{2-9}
  & 5 & {\bf\color{blue} 6} & 1 & {\bf\color{blue} 6} & 1 \\ \cline{2-9}
  & {\bf 6} & {\bf\color{red} 9} & {\bf\color{blue} 4} & {\bf\color{red} 11} & {\bf\color{blue} 6} & {\bf\color{red} 13} \\ \cline{2-9}
  & 7 & {\bf\color{blue} 8} & 1 & {\bf\color{blue} 8} & 1 & {\bf\color{blue} 8} & 1 \\ \cline{2-9}
  & {\bf 8} & {\bf\color{red} 11} & {\bf\color{blue} 4} & {\bf\color{red} 13} & {\bf\color{blue} 6} & {\bf\color{red} 15} & {\bf\color{blue} 8} & {\bf\color{red} 17} \\ \hline \hline
  \multicolumn{2}{c|}{\multirow{2}{*}{$N_z = {\bf 4}$}} & \\ \cline{3-9}
  \multicolumn{2}{c|}{} & {\bf 2} & 3 & {\bf 4} & 5 & {\bf 6} & 7 & {\bf 8} \\ \hline
  & {\bf 2} &  \\ \cline{2-9}
  & 3 &  &  \\ \cline{2-9}
  & {\bf 4} &  &  & {\bf\color{red} 11} \\ \cline{2-9}
  & 5 &  &  & {\bf\color{blue} 6} & 1 \\ \cline{2-9}
  & {\bf 6} &  &  & {\bf\color{red} 13} & {\bf\color{blue} 6} & {\bf\color{red} 15} \\ \cline{2-9}
  & 7 &  &  & {\bf\color{blue} 8} & 1 & {\bf\color{blue} 8} & 1 \\ \cline{2-9}
  & {\bf 8} &  &  & {\bf\color{red} 15} & {\bf\color{blue} 6} & {\bf\color{red} 17} & {\bf\color{blue} 8} & {\bf\color{red} 19} \\ \hline \hline
  \end{tabular}
  \begin{tabular}{| c | c | c c c c c c c }
  \hline \hline
  \multicolumn{2}{|c|}{\multirow{2}{*}{$N_z = 3$}} & \phantom{$N_y$}\\ \cline{3-9}
  \multicolumn{2}{|c|}{} & {\bf 2} & 3 & {\bf 4} & 5 & {\bf 6} & 7 & {\bf 8} \\ \hline
  \phantom{$N_x$} & {\bf 2} \\ \cline{2-9}
  & 3 &  & 1 \\ \cline{2-9}
  & {\bf 4} &  & 1 & {\bf\color{blue} 4} \\ \cline{2-9}
  & 5 &  & 1 & 1 & 1 \\ \cline{2-9}
  & {\bf 6} &  & 1 & {\bf\color{blue} 4} & 1 & {\bf\color{blue} 4} \\ \cline{2-9}
  & 7 &  & 1 & 1 & 1 & 1 & 1 \\ \cline{2-9}
  & {\bf 8} &  & 1 & {\bf\color{blue} 4} & 1 & {\bf\color{blue} 4} & 1 & {\bf\color{blue} 4} \\ \hline \hline
  \multicolumn{2}{|c|}{\multirow{2}{*}{$N_z = 5$}} & \\ \cline{3-9}
  \multicolumn{2}{|c|}{} & {\bf 2} & 3 & {\bf 4} & 5 & {\bf 6} & 7 & {\bf 8} \\ \hline
  & {\bf 2} &  \\ \cline{2-9}
  & 3 &  &  \\ \cline{2-9}
  & {\bf 4} &  &  &  \\ \cline{2-9}
  & 5 &  &  &  & 1 \\ \cline{2-9}
  & {\bf 6} &  &  &  & 1 & {\bf\color{blue} 6} \\ \cline{2-9}
  & 7 &  &  &  & 1 & 1 & 1 \\ \cline{2-9}
  & {\bf 8} &  &  &  & 1 & {\bf\color{blue} 6} & 1 & {\bf\color{blue} 6} \\ \hline \hline
  \end{tabular}
  \caption{Numerically obtained rank deficiency of $\bS^{\mfB}_h$ in 3D}
  \label{tab:numerical-rank-deficiency-stiffness}
\end{table}

\subsection{Numerical schemes in 3D}
Consider again an elliptic problem with periodic BC \eqref{eq:weak-formulation-periodic}
with the compatibility condition $\int_\O f = 0$,
the corresponding weak formulation \eqref{eq:weak-formulation-periodic},
and the corresponding discrete weak formulation \eqref{eq:discrete-weak-formulation-periodic} in 3D.

Throughout this section,
we assume that all $N_x$, $N_y$, and $N_z$ are even.

\subsection*{Additional Notations \& Properties}
$\mfB^\flat$ again denotes a basis for {$V^{\mfB,h}_{\#}$}, a proper
subset of $\mfB$.
A constructive method for $\mfB^\flat$ will be given.
Let $\mfA$ and $\mfA^{\flat}$ be the set of all alternating functions,
and a complementary basis for $V^h_{\#}$ which consists of alternating functions
as in \thmref{thm:complementary-basis-3d}, respectively.
Without loss of generality,
we may write $\mfB^\flat = \{\phi_j \}_{j=1}^{|\mfB^\flat|}$,
$\mfB = \{\phi_j \}_{j=1}^{|\mfB|}$,
$\mfA^\flat = \{\psi_j\}_{j=1}^{|\mfA^\flat|}$, and
$\mfA = \{\psi_j\}_{j=1}^{|\mfA|}$.
Define two extended sets
$\mfE := \mfB \cup \mfA$, and
$\mfE^\flat := \mfB^\flat \cup \mfA^\flat$.
Even in the 3D case, $\mfE^\flat$ forms a basis for $V^h_{\#}$.
The characteristics of $\mfB^\flat$, $\mfB$, $\mfE^\flat$, and $\mfE$ in 3D
are summarized in \tabref{tab:summary-characteristic-3d}.

\begin{table}[!htb]
  \centering
  \footnotesize
  \begin{tabular}{ l | l | c | l }
    \hline \hline
    \multicolumn{1}{c|}{$\mcS$} & \multicolumn{1}{c|}{$|\mcS|$} & \multicolumn{1}{c|}{$\Span \mcS$} & \multicolumn{1}{c}{$\dim \Span \mcS$} \\ \hline \hline
    $\mfB^\flat$ & $N_x N_y N_z-(N_x+N_y+N_z)+2$ & \multirow{2}{*}{$V^{\mfB,h}_{\#}$} & \multirow{2}{*}{$N_x N_y N_z-(N_x+N_y+N_z)+2$} \\
    $\mfB$ & $N_x N_y N_z$ & & \\ \hline
    $\mfE^\flat$ & $N_x N_y N_z+(N_x+N_y+N_z)-1$ & \multirow{2}{*}{$V^h_{\#}$} & \multirow{2}{*}{$N_x N_y N_z+(N_x+N_y+N_z)-1$} \\
    $\mfE$ & $N_x N_y N_z+2(N_x+N_y+N_z)$ & & \\ \hline \hline
  \end{tabular}
  \caption{Characteristic of each test and trial function set $\mcS$ in 3D when all $N_x$, $N_y$, $N_z$ are even}
  \label{tab:summary-characteristic-3d}
\end{table}

\begin{remark}
Unlike in the 2D case,
$\mfA$ may not be linearly independent in the 3D case.
Thus
we use $\mfA^\flat$, a linearly independent subset, instead of $\mfA$ to construct $\mfE^\flat$ as a basis for $V^h_{\#}$.

\end{remark}

\begin{lemma}\label{lem:integral-property-B-A-3d}
Let $\mfB$ and $\mfA$ be as above. Then the followings hold.
\begin{enumerate}
\item $a_h (\phi, \psi) = 0 \forany \phi \in \mfB \forany \psi \in \mfA$.
\item $\int_\O \psi = 0 \forany \psi \in \mfA$.
\item There exists an $h$-independent constant $C$ such that
$\| \psi \|_0 \le C h^{1/2}$ and $| \psi |_{1,h} \le C h^{-1/2} \forany \psi \in \mfA$.
\end{enumerate}
\end{lemma}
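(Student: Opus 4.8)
The plan is to reduce each of the three assertions to the cube--local structure of an alternating function and then to repeat the bookkeeping of the two--dimensional \lemref{lem:integral-property-B-A}. It suffices to treat one representative element of $\mfA$, say $\psi=(\psi^x_i)_y$ supported on the slab $\O^x_i$ of $N_y\times N_z$ cubes; the other five families are obtained by relabelling the coordinates, and the three items for all of $\mfA$ (and, using bilinearity of $a_h$, for all of $\mfB$) follow. The first step is to record the shape of $\psi$ on a cube $Q=Q_{ijk}\subset\O^x_i$ of side $h$. The only nonzero barycenter values of $\psi$ on $\p Q$ are those on the two faces perpendicular to the $y$-axis, and they are $(-1)^{j+k}$ and $(-1)^{j+1+k}$, opposite in sign because the pattern alternates in the $y$-direction. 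Substituting this into the (three--dimensional form of the) dice rule \eqref{eq:dice-rule} on $Q$ and solving for the affine function $\psi|_Q$, one gets $\nabla\psi|_Q=\bigl(0,-2(-1)^{j+k}/h,0\bigr)$; in particular $\p\psi/\p x=\p\psi/\p z=0$ elementwise, $\p\psi/\p y$ is a checkerboard of magnitude $2/h$ over the slab, the value of $\psi$ at the centroid of $Q$ is the mean of the two opposite $y$-face barycenter values, hence $0$, and $|\psi|\le1$ on $Q$. (This is the exact analogue of the Remark after \thmref{thm:B} and of the well-definedness already used in \thmref{thm:complementary-basis-3d}.)

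Given this local description, items (2) and (3) are routine. Item (2): $\int_Q\psi=|Q|\,\psi(\text{centroid of }Q)=0$ for every cube, so $\int_\O\psi=0$. Item (3): $\psi$ is supported on $N_yN_z$ cubes and on each one $|\psi|\le1$ while $|\nabla\psi|=2/h$, so with $N_yh=\ell_y$, $N_zh=\ell_z$,
\begin{align*}
\|\psi\|_0^2=\tfrac13 N_yN_z h^3=\tfrac13\ell_y\ell_z\,h,\qquad
|\psi|_{1,h}^2=4 N_yN_z h^3/h^2=4\,\ell_y\ell_z/h,
\end{align*}
which yields the $\mathcal{O}(h^{1/2})$ and $\mathcal{O}(h^{-1/2})$ bounds with a constant depending only on $\O$ (take the largest of the three products $\ell_\mu\ell_\nu$).

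For item (1) I would integrate by parts element by element. Since $\Delta\psi=0$ on each cube and the nonconforming condition makes $\int_f\phi$ single--valued on every face $f$, one obtains
\begin{align*}
a_h(\phi,\psi)=\sum_{f}|f|\,\sig^{(m)}_f(\phi)\,J_f,
\end{align*}
the sum over all faces (periodic identification makes every face an interior face), where $J_f$ is the sum of the two outward normal derivatives of $\psi$ on $f$. By the local form of $\psi$, $J_f=0$ on every face except the $y$-perpendicular faces $g_{jk}$ inside $\O^x_i$, where $J_{g_{jk}}=-4(-1)^{j+k}/h$; hence $a_h(\phi,\psi)=-4h\sum_{j,k}(-1)^{j+k}\sig^{(m)}_{g_{jk}}(\phi)$. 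It remains to see that this checkerboard sum vanishes for every node--based function $\phi=\phi_z\in\mfB$. By definition $\sig^{(m)}_{g_{jk}}(\phi_z)=\tfrac12$ if $z$ is a vertex of $g_{jk}$ and $0$ otherwise, and within the slab a given node $z$ is a vertex of at most two faces $g_{jk}$: those at the unique $y$-level of $z$ and at the two $z$-positions adjacent to $z$, i.e.\ with indices $(j,k)$ and $(j,k{+}1)$ (taken mod $N_z$, and here one uses that $N_z$ is even so that the wrap--around case is not exceptional). The two weights $(-1)^{j+k}$ and $(-1)^{j+k+1}$ cancel, so $a_h(\phi_z,\psi)=0$, and the coordinate symmetry finishes item (1).

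The only genuinely delicate point is item (1): one must fix the sign pattern of $\nabla\psi$ correctly and then recognise the resulting weighted face sum as one that a node--based function annihilates --- which is the three--dimensional, slab--localized version of the fact that the alternating sum of boundary midpoint values vanishes in \lemref{lem:relation-boundary-dof}. Once the cube--local picture of $\psi$ is in hand, items (2) and (3) are immediate.
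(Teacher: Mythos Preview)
The paper states this lemma without proof (just as it does for the two--dimensional \lemref{lem:integral-property-B-A}), so there is no argument to compare against directly. Your proof is correct. The computations you carry out for item~(3) are exactly those the paper performs later, in the proof of the lemma following \thmref{thm:numerical-sols-3d}, where $a_h\bigl((\psi^y_j)_x,(\psi^y_j)_x\bigr)=4N_xN_zh$ and $\bigl((\psi^y_j)_x,(\psi^y_j)_x\bigr)_\O=\tfrac13 N_xN_zh^3$ are obtained; you have simply placed them where they belong.

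For item~(1) you supply an argument the paper never spells out. Your integration-by-parts reduction to a signed face sum is clean, and the cancellation you identify is right: a node $z=(x_a,y_b,z_c)$ with $a\in\{i-1,i\}$ is a vertex of exactly the two faces $g_{b,c}$ and $g_{b,c+1}$, whose weights $(-1)^{b+c}$ and $(-1)^{b+c+1}$ cancel (the evenness of $N_z$ makes the wrap--around case unexceptional, as you note). One small wording issue: you write ``at most two faces,'' but for such $z$ it is exactly two; for $a\notin\{i-1,i\}$ it is zero. Either way the sum vanishes.
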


\begin{remark}
The second equation in \lemref{lem:integral-property-B-A} does not
hold in the 3D case.
If $\mu = \nu$, then $a_h((\psi^\iota)_\mu, (\psi^\lambda)_\nu)$ does not vanish in general.
\end{remark}

For the 3D case, we define again $\bS^{\mfB^\flat}_h$, $\tilde{\bS}^{\mfB^\flat}_h$,
and $\bS^{\mfA}_h$
as in \eqref{eq:def-stiffness-B-flat}--\eqref{eq:def-stiffness-A}, respectively.
Furthermore we define $\bS^{\mfA^\flat}_h$,
the stiffness matrix associated with $\mfA^\flat$ similarly.
Define 
the linear systems $\mathcal{\tilde{L}}^{\mfE^\flat}_h$, $\mcL^{\mfE^\flat}_h$
as in \eqref{eq:scheme-1}, \eqref{eq:scheme-2},
with slight modification since $\mfE^\flat$ is equal to
$\mfB^\flat \cup \mfA^\flat$ in the 3D case.
Other linear systems
$\mcL^{\mfE}_h$, $\mcL^{\mfB}_h$ are defined as in
\eqref{eq:scheme-3}, \eqref{eq:scheme-4}.
The solutions
$\mathbf{\tilde{u}^\flat}$, $\mathbf{u^\flat}$, $\mathbf{u^\natural}$, $\mathbf{\bar{u}^\natural}$,
and the numerical solutions
$u_h$, $u^\flat_h$, $u^\natural_h$, $\bar{u}^\natural_h$
are defined as in
\eqref{eq:scheme-1}--\eqref{eq:scheme-2-zero mean value},
\eqref{eq:scheme-3}--\eqref{eq:scheme-4},
\eqref{eq:sol-scheme-1}, \eqref{eq:sol-scheme-2}, \eqref{eq:sol-scheme-3}, \eqref{eq:sol-scheme-4}.

The following describes relations between numerical solutions in 3D, as an analog of \thmref{thm:numerical-sols}.
\begin{theorem}\label{thm:numerical-sols-3d}
  {Let $(\Tau_h)_{0<h}$ be a family of uniform rectangular
  decomposition, that is, $\Tau_h =\widetilde\Tau_h$ for all $h.$
Assume that $N_x, N_y$ and $N_z$ are even.}
Let $u_h$, $u^\flat_h$, $u^\natural_h$, $\bar{u}^\natural_h$ be the numerical solutions of
\eqref{eq:weak-formulation-periodic} in 3D
as \eqref{eq:sol-scheme-1}, \eqref{eq:sol-scheme-2}, \eqref{eq:sol-scheme-3}, \eqref{eq:sol-scheme-4}, respectively,
with $\mfE^\flat = \mfB^\flat \cup \mfA^\flat$.
Then
$u_h = u^\flat_h = u^\natural_h$, and
\begin{equation*}
\|u^\natural_h - \bar{u}^\natural_h\|_0
\le C
	h^2 \|f\|_0, \
|u^\natural_h - \bar{u}^\natural_h|_{1,h}
\le C
	h \|f\|_0.
\end{equation*}
\end{theorem}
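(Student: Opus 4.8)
The plan is to follow the architecture of the proof of \thmref{thm:numerical-sols}, adapting the two spots where the 2D argument uses features that fail in 3D: the diagonality of $\bS^{\mfA}_h$, and the fact that $\mfA$ has only two elements. First I would record the structural facts that persist. By \lemref{lem:integral-property-B-A-3d}(1) every one of the four linear systems still block-decouples into a $\mfB$-type block and an $\mfA$-type block. The form $a_h$ is coercive on $\Span\mfA^\flat$ (a function there with zero energy is constant by \lemref{lem:ker-iff-constant}, hence zero since $\int_\O\psi=0$ for all $\psi\in\mfA$ by \lemref{lem:integral-property-B-A-3d}(2)), so $\bS^{\mfA^\flat}_h$ is nonsingular; and $\Span\mfA=\Span\mfA^\flat$, the only relations among the $|\mfA|=2(N_x+N_y+N_z)$ alternating functions being the three dependencies of \thmref{thm:complementary-basis-3d}. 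Finally, the kernels of the singular matrices $\bS^{\mfB^\flat}_h$, $\bS^{\mfB}_h$, $\bS^{\mfA}_h$ are spanned, via \propref{prop:ker-decomposition} and \propref{prop:dim-ker-3d}, by coefficient vectors that represent either a constant or the zero function in the relevant span; since $\int_\O f=0$ annihilates constants, the right-hand sides of the systems in Options 2--4 lie in the ranges of the corresponding (symmetric positive semidefinite) matrices, so the Krylov/Drazin solutions are well posed (compare \secref{sec:drazin-inverse}).

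For $u_h=u^\flat_h$: this is verbatim the 2D computation, since $\mathcal{\tilde{L}}^{\mfE^\flat}_h$ and $\mcL^{\mfE^\flat}_h$ still differ only in the one row that imposes $\mathbf{u}|_{\mfB^\flat}\cdot\mathbf{1}_{\mfB^\flat}=0$, and the post-processing of Option 2 makes $\mathbf{u^\flat}$ satisfy precisely that row; nonsingularity of $\mathcal{\tilde{L}}^{\mfE^\flat}_h$ then forces $\mathbf{u^\flat}=\mathbf{\tilde{u}^\flat}$. For $u^\flat_h=u^\natural_h$ I would work with the splitting $V^h_{\#}=V^{\mfB,h}_{\#}\oplus\Span\mfA^\flat$. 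On $V^{\mfB,h}_{\#}$, both $\mathbf{u^\flat}|_{\mfB^\flat}\mfB^\flat$ and $\mathbf{u^\natural}|_{\mfB}\mfB$ solve the same Galerkin problem $a_h(\cdot,\phi)=\int_\O f\phi$ for $\phi$ ranging over $\mfB^\flat$ resp.\ $\mfB$ (equivalent, since each spans $V^{\mfB,h}_{\#}$) together with the zero-mean normalization, and by \lemref{lem:ker-iff-constant} this determines the element of $V^{\mfB,h}_{\#}$ uniquely; hence the two $\mfB$-parts are the same function, which is moreover $\bar{u}^\natural_h$ of Option 4 (whose system is exactly the $\mfB$-block of Option 3). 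On $\Span\mfA^\flat$, both $\mfA$-parts solve $a_h(\cdot,\psi)=\int_\O f\psi$ for all $\psi\in\Span\mfA=\Span\mfA^\flat$, uniquely solvable by coercivity; hence they agree. With the first step this gives $u_h=u^\flat_h=u^\natural_h$.

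For the error bound, the previous step already shows $u^\natural_h-\bar{u}^\natural_h = v^\natural := \mathbf{u^\natural}|_{\mfA}\mfA\in\Span\mfA$, where $v^\natural$ is characterized by $a_h(v^\natural,\psi)=\int_\O f\psi$ for all $\psi\in\Span\mfA$; taking $\psi=v^\natural$,
\begin{align*}
|v^\natural|_{1,h}^2 = \int_\O f\, v^\natural \le \|f\|_0\,\|v^\natural\|_0 .
\end{align*}
The one genuinely new ingredient --- the substitute for the 2D componentwise estimate, which is unavailable because $|\mfA|$ grows like $h^{-1}$ and $\bS^{\mfA}_h$ is no longer diagonal (cf.\ the remark after \lemref{lem:integral-property-B-A-3d}) --- is the sharp inverse-type bound
\begin{align*}
\|v\|_0 \le C h\,|v|_{1,h}\qquad\forany v\in\Span\mfA .
\end{align*}
I would prove it elementwise: each $\psi\in\mfA$ restricts on a cell $K$ to $\pm\tfrac{2}{h_\iota}(x_\iota-x_{K,\iota})$ for one coordinate $\iota$ (or to $0$), hence vanishes at the centroid $\mathbf{x}_K$; so any $v\in\Span\mfA$ restricts on $K$ to a purely linear $\mathbf{g}_K\cdot(\mathbf{x}-\mathbf{x}_K)$, for which the mixed moments $\int_K(x_i-x_{K,i})(x_j-x_{K,j})$, $i\neq j$, vanish and therefore $\|v\|_{0,K}^2=\tfrac{|K|}{12}\sum_i g_{K,i}^2 h_i^2\le Ch^2|v|_{1,h,K}^2$; summing over $K$ gives the claim. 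Substituting into the displayed inequality yields $|v^\natural|_{1,h}\le Ch\|f\|_0$, whence $\|v^\natural\|_0\le Ch\,|v^\natural|_{1,h}\le Ch^2\|f\|_0$, which are the two asserted bounds. The main obstacle is exactly this last estimate: the plain discrete Poincar\'e inequality $\|v\|_0\le C|v|_{1,h}$ only yields $|v^\natural|_{1,h}\le C\|f\|_0$, so one must exploit the grid-scale oscillation together with the elementwise mean-zero property of $\Span\mfA$ to recover the extra power of $h$.
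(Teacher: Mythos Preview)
Your proof is correct and follows the same three-step architecture as the paper (show $u_h=u^\flat_h$, then $u^\flat_h=u^\natural_h$, then bound $u^\natural_h-\bar u^\natural_h$ via an inverse estimate on $\Span\mfA$), but two of the sub-arguments take a different route. For $u^\flat_h=u^\natural_h$ the paper works at the matrix level: it introduces the representation matrix $\mathbf{T}$ with $\phi_{|\mfB^\flat|+\ell}=\sum_j t_{\ell j}\phi_j$, uses it to verify $\bS^{\mfB}_h\begin{bmatrix}\mathbf{u^\flat}|_{\mfB^\flat}\\ \mathbf{0}\end{bmatrix}=\int_\O f\,\mfB$ (and the analogous identity for $\mfA$), and then finishes as in 2D. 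Your argument instead works at the level of functions, observing that both $\mfB$-parts are Galerkin solutions of the same problem in $V^{\mfB,h}_{\#}$ with the same zero-mean normalization, and likewise for the $\mfA$-parts in $\Span\mfA^\flat=\Span\mfA$; this is shorter and avoids any explicit knowledge of $\mfB^\flat$. For the key inverse estimate $\|v\|_0\le Ch\,|v|_{1,h}$ on $\Span\mfA$, the paper proves a separate lemma showing the exact matrix identity $\mcM^{\mfA}_h=\tfrac{1}{12}h^2\,\mcS^{\mfA}_h$ via block-diagonalization of both matrices and direct computation of all entries; your elementwise argument---that every $\psi\in\mfA$ vanishes at cell centroids, hence any $v\in\Span\mfA$ restricts to $\mathbf{g}_K\cdot(\mathbf{x}-\mathbf{x}_K)$ on each cube $K$, giving $\|v\|_{0,K}^2=\tfrac{h^2}{12}|v|_{1,h,K}^2$---yields the same constant more directly. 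The paper's route has the side benefit of recording the full mass--stiffness relation; yours is more elementary and makes the geometric reason for the extra factor of $h$ transparent.
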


\begin{proof}
The equality between $u_h$ and $u^\flat_h$ can be proved as in the 2D case.
Since $\mfB^\flat$ is a basis for $V^{\mfB,h}_{\#}$,
there exist $t_{\ell j} \in \BbbR$ for $1\le \ell \le |\mfB|-|\mfB^\flat|$ and $1\le j \le |\mfB^\flat|$,
such that 
\begin{align}
\phi_{|\mfB^\flat|+\ell} = \sum_{j=1}^{|\mfB^\flat|} t_{\ell j} \phi_j.
\end{align}
Thus
$
\sum_{K\in\Tau_h} \nabla \phi_k \cdot \nabla \left(\phi_{|\mfB^\flat|+\ell} - \sum_{j=1}^{|\mfB^\flat|} t_{\ell j} \phi_j \right)  = 0
$
for all $k$, and it is simplified as
$
(\bS^\mfB_h)_{|\mfB^\flat|+\ell,k}
=
	\sum_{j=1}^{|\mfB^\flat|} t_{\ell j} (\bS^\mfB_h)_{jk}.
$
Let $\mathbf{T}$ denote a matrix of size $(|\mfB|-|\mfB^\flat|) \times |\mfB^\flat|$ 
such that $(\mathbf{T})_{\ell j} = t_{\ell j}$.
Then the last equation for $1\le \ell \le |\mfB|-|\mfB^\flat|$ and $1\le k \le |\mfB^\flat|$
can be expressed as a linear system
\begin{align}
[\bS^{\mfB}_h]_{|\mfB^\flat|+1:|\mfB|,1:|\mfB^\flat|}
&=
	\mathbf{T} [\bS^{\mfB}_h]_{1:|\mfB^\flat|, 1:|\mfB^\flat|}.
\end{align}
Note that $[\bS^{\mfB}_h]_{1:|\mfB^\flat|, 1:|\mfB^\flat|}$ is just equal to
$\bS^{\mfB^\flat}_h$.
Let $\begin{bmatrix} \mathbf{u^\flat}|_{\mfB^\flat} \\ \mathbf{0} \end{bmatrix}$ be
a trivial extension of $\mathbf{u^\flat}|_{\mfB^\flat}$ into a vector in $\BbbR^{|\mfB|}$
by padding zeros.
Then
\begin{align*}
\bS^{\mfB}_h \begin{bmatrix} \mathbf{u^\flat}|_{\mfB^\flat} \\ \mathbf{0} \end{bmatrix}
&=
	\begin{bmatrix}
	\bS^{\mfB^\flat}_h \mathbf{u^\flat}|_{\mfB^\flat} \\
	[\bS^{\mfB}_h]_{|\mfB^\flat|+1:|\mfB|,1:|\mfB^\flat|} \mathbf{u^\flat}|_{\mfB^\flat}
	\end{bmatrix}
=
	\begin{bmatrix}
	\bS^{\mfB^\flat}_h \mathbf{u^\flat}|_{\mfB^\flat} \\
	\mathbf{T} \bS^{\mfB^\flat}_h \mathbf{u^\flat}|_{\mfB^\flat}
	\end{bmatrix}
=
	\begin{bmatrix}
	\int_\O f \mfB^\flat \\
	\mathbf{T} \int_\O f \mfB^\flat
	\end{bmatrix}
\end{align*}
since $\bS^{\mfB^\flat}_h \mathbf{u^\flat}|_{\mfB^\flat} = \int_\O f \mfB^\flat$.
We can easily derive
\begin{align*}
\mathbf{T} \int_\O f \mfB^\flat
=
	\mathbf{T}
	\begin{bmatrix}
	\int_\O f \phi_1 \\
	\vdots \\
	\int_\O f \phi_{|\mfB^\flat|}
	\end{bmatrix}
=
	\begin{bmatrix}
	\int_\O f \sum_{j=1}^{|\mfB^\flat|} t_{1j} \phi_j \\
	\vdots \\
	\int_\O f \sum_{j=1}^{|\mfB^\flat|} t_{|\mfB^\flat| j} \phi_j
	\end{bmatrix}
=
	\begin{bmatrix}
	\int_\O f \phi_{|\mfB^\flat|+1} \\
	\vdots \\
	\int_\O f \phi_{|\mfB|}
	\end{bmatrix},
\end{align*}
which implies
$
\bS^{\mfB}_h \begin{bmatrix} \mathbf{u^\flat}|_{\mfB^\flat} \\ \mathbf{0} \end{bmatrix}
=
	\int_\O f \mfB.
$
In the same way we can obtain
$
\bS^{\mfA}_h \begin{bmatrix} \mathbf{u^\flat}|_{\mfA^\flat} \\ \mathbf{0} \end{bmatrix}
=
	\int_\O f \mfA,
$
and these equations derive $u^\natural_h = u^\flat_h$
by the same argument as in the 2D case.

For the last, consider the difference between $u^\natural_h$ and $\bar{u}^\natural_h$.
We can easily observe that
$u^\natural_h - \bar{u}^\natural_h = \left.\bu^\natural\right|_{\mfA} \mfA$,
and
$a_h(u^\natural_h - \bar{u}^\natural_h, \psi) = \int_\O f \psi$ for all $\psi \in \mfA$.
Thus
\begin{align*}
|u^\natural_h - \bar{u}^\natural_h|^2_{1,h}
&=
	a_h\left(u^\natural_h - \bar{u}^\natural_h, u^\natural_h - \bar{u}^\natural_h\right) \\
&=
	\int_\O f (u^\natural_h - \bar{u}^\natural_h)
\le C
	\|f\|_0 \, \|u^\natural_h - \bar{u}^\natural_h\|_0
= C
	h \, \|f\|_0 \, |u^\natural_h - \bar{u}^\natural_h|_{1,h} 
\end{align*}
due to the following lemma,
and we immediately obtain the difference in mesh-dependent norm, and in $L^2$-norm.
\end{proof}

\begin{lemma}
Let $\mcM^{\mfA}_h$ be the mass matrix associated with $\mfA$.
Then there exists an $h$-independent constant $C$ such that
$\mcM^{\mfA}_h = C h^2 \mcS^{\mfA}_h$.
In a consequence,
$\|v_h\|_{0} = C^{1/2} h |v_h|_{1,h}$ for all $v_h \in \Span \mfA$.
\end{lemma}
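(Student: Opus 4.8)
The plan is to prove the stronger \emph{elementwise} identity: for every cube $K\in\Tau_h$ of side $h$ and every pair $\psi,\psi'\in\mfA$ one has
$\int_K \psi\psi' = \tfrac{h^2}{12}\int_K \nabla\psi\cdot\nabla\psi'$,
and then to sum over $K$. Since $(\mcM^{\mfA}_h)_{\mu\nu}=\int_\O\psi_\mu\psi_\nu=\sum_K\int_K\psi_\mu\psi_\nu$ and $(\mcS^{\mfA}_h)_{\mu\nu}=a_h(\psi_\mu,\psi_\nu)=\sum_K\int_K\nabla\psi_\mu\cdot\nabla\psi_\nu$, the elementwise identity immediately yields $\mcM^{\mfA}_h=\tfrac{h^2}{12}\mcS^{\mfA}_h$, i.e. the constant is $C=\tfrac1{12}$.

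The first (and the only delicate) step is a local normal form for alternating functions. Fix a cube $K$ with lower corner $(x_K,y_K,z_K)$, and set $\ell^K_x(\mbx)=-1+\tfrac2h(x-x_K)$, and $\ell^K_y,\ell^K_z$ analogously. I claim that for each $\psi\in\mfA$ (recall the $\psi$'s are the $(\psi^x_i)_y,(\psi^x_i)_z,(\psi^y_j)_x,(\psi^y_j)_z,(\psi^z_k)_x,(\psi^z_k)_y$ of \thmref{thm:complementary-basis-3d}), either $\psi|_K\equiv0$, when $K$ lies outside the support strip of $\psi$, or $\psi|_K=\varepsilon^K_\psi\,\ell^K_{\xi_\psi}$ for a sign $\varepsilon^K_\psi=\pm1$ and the coordinate direction $\xi_\psi\in\{x,y,z\}$ to which the nonzero-barycenter faces of $\psi$ are perpendicular (so $\xi_\psi=y$ for both $(\psi^x_i)_y$ and $(\psi^z_k)_y$, etc.). This follows exactly as the two-dimensional computation for $\psi_x$: a $P_1$ function on $K$ is $a+b(x-x_K)+c(y-y_K)+d(z-z_K)$, and imposing that the four face-barycenter values on the faces perpendicular to the two ``off'' axes vanish forces the corresponding slope coefficients to be zero and pins the function down to $\pm\ell^K_{\xi_\psi}$; the two remaining barycenter values then come out automatically as $\mp1$, consistent with the prescribed $\pm1$ normalization. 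One should check this data is consistent on the cube (it is: $\ell^K_\xi$ has barycenter value $0$ on the faces perpendicular to the other two axes and $\mp1$ on the two faces perpendicular to the $\xi$-axis).

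With the normal form in hand, the elementwise identity reduces to two reference-cube integrals. First, $\int_K \ell^K_\xi\ell^K_{\xi'}=\tfrac{h^3}{3}$ if $\xi=\xi'$ and $=0$ if $\xi\neq\xi'$, the latter because $\int_0^h(-1+\tfrac2ht)\,dt=0$ makes the product integral factor through a vanishing one-dimensional integral. Second, $\nabla\ell^K_\xi=\tfrac2h\,\mathbf e_\xi$ is constant, so $\int_K\nabla\ell^K_\xi\cdot\nabla\ell^K_{\xi'}=\tfrac4{h^2}h^3\,\delta_{\xi\xi'}=4h\,\delta_{\xi\xi'}$. In both cases $\int_K \ell^K_\xi\ell^K_{\xi'}=\tfrac{h^2}{12}\int_K\nabla\ell^K_\xi\cdot\nabla\ell^K_{\xi'}$. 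Multiplying through by $\varepsilon^K_\psi\varepsilon^K_{\psi'}$ (and noting both sides vanish when either function is zero on $K$) gives $\int_K\psi\psi'=\tfrac{h^2}{12}\int_K\nabla\psi\cdot\nabla\psi'$; summing over $K\in\Tau_h$ gives $\mcM^{\mfA}_h=\tfrac{h^2}{12}\mcS^{\mfA}_h$.

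Finally, for $v_h\in\Span\mfA$ choose any coefficient vector $\mathbf c$ with $v_h=\mathbf c\,\mfA$ (it need not be unique, since $\mfA$ may be linearly dependent in 3D, but $\|v_h\|_0^2=\mathbf c^T\mcM^{\mfA}_h\mathbf c$ and $|v_h|_{1,h}^2=\mathbf c^T\mcS^{\mfA}_h\mathbf c$ are intrinsic quantities). Then $\|v_h\|_0^2=\tfrac{h^2}{12}\,\mathbf c^T\mcS^{\mfA}_h\mathbf c=\tfrac{h^2}{12}\,|v_h|_{1,h}^2$, which is the asserted $\|v_h\|_0=C^{1/2}h\,|v_h|_{1,h}$ with $C=\tfrac1{12}$. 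The main obstacle is genuinely just the local normal form of the first step: once each $\psi|_K$ is identified (up to sign) with one of the three standard directional affine functions $\ell^K_\xi$ or with $0$, everything else is the pair of elementary reference-cube integrals above. Note this uses in an essential way that the elements are cubes of side $h$ (the uniform decomposition hypothesis of \thmref{thm:numerical-sols-3d}); for a general $h_1\times h_2\times h_3$ box the ratio would be $h_{\xi}^2/12$, direction-dependent, and no single $C$ would work.
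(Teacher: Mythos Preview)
Your proof is correct and obtains the same constant $C=\tfrac1{12}$ as the paper, but the organization differs. The paper first argues that both $\mcS^{\mfA}_h$ and $\mcM^{\mfA}_h$ are block-diagonal with respect to the subscript direction (i.e.\ entries pairing $(\psi^\iota_j)_\mu$ with $(\psi^\lambda_k)_\nu$ vanish when $\mu\neq\nu$), and then computes the entries of each block globally by summing over all cubes in the relevant support intersections, splitting into the cases $\iota=\lambda$ and $\iota\neq\lambda$. You instead prove the single elementwise identity $\int_K\psi\psi'=\tfrac{h^2}{12}\int_K\nabla\psi\cdot\nabla\psi'$ via the local normal form $\psi|_K\in\{0,\pm\ell^K_x,\pm\ell^K_y,\pm\ell^K_z\}$, which absorbs both the block-diagonal observation (your $\xi\neq\xi'$ case) and the within-block computation (your $\xi=\xi'$ case) into one calculation, and then sum over $K$. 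Your route avoids the global case analysis and makes transparent that the identity is purely local; the paper's route makes the block structure of the matrices explicit, which is informative in its own right. Your closing remark that for an $h_1\times h_2\times h_3$ box the ratio becomes direction-dependent is a nice addendum not present in the paper.
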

\begin{proof}
Remind that $(\psi^\iota_j)_\mu$ is the alternating function such that
the support is $\O^\iota_j$ and
the nonzero barycenter values are only lying on faces perpendicular to
the $\mu$-axis.
Thus only $\mu$-component of the piecewise gradient of $(\psi^\iota_j)_\mu$ survives.
It implies that $a_h((\psi^\iota_j)_\mu, (\psi^\lambda_k)_\nu) = 0$ if $\mu \not = \nu$.
Therefore we can consider $\mcS^{\mfA}_h$ as a block diagonal matrix:
$
\mcS^{\mfA}_h
=
	\begin{bmatrix}
	\mcS^{\mfA_x}_h & \mathbf{0} & \mathbf{0} \\
	\mathbf{0} & \mcS^{\mfA_y}_h & \mathbf{0} \\
	\mathbf{0} & \mathbf{0} & \mcS^{\mfA_z}_h
      \end{bmatrix},
$                                  
where $\mfA_x$, $\mfA_y$, $\mfA_z$ are defined as in \thmref{thm:complementary-basis-3d},
and $\mcS^{\mfA_x}_h$, $\mcS^{\mfA_y}_h$, $\mcS^{\mfA_z}_h$
are the stiffness matrices associated with the respective sets.

We can also consider $\mcM^{\mfA}_h$ as a block diagonal matrix
in the same structure,
since the following observation:
if $\mu \not = \lambda$, then
\begin{align*}
\left((\psi^\iota_j)_\mu, (\psi^\lambda_k)_\nu\right)_\O
&=
	\int_\O (\psi^\iota_j)_\mu \, (\psi^\lambda_k)_\nu 
=
	\sum_{Q \in \Tau_h(\O)} \int_Q (\psi^\iota_j)_\mu \, (\psi^\lambda_k)_\nu  \\
&=
	\sum_{Q \in \Tau_h(\O)}
	h
	\int_{Q_\mu} (\psi^\iota_j)_\mu \operatorname{d\mu}
	\int_{Q_\nu} (\psi^\lambda_k)_\nu \operatorname{d\nu} = 0.
\end{align*}
Set
$
\mcM^{\mfA}_h
=
	\begin{bmatrix}
	\mcM^{\mfA_x}_h & \mathbf{0} & \mathbf{0} \\
	\mathbf{0} & \mcM^{\mfA_y}_h & \mathbf{0} \\
	\mathbf{0} & \mathbf{0} & \mcM^{\mfA_z}_h
      \end{bmatrix},
      $
where $\mcM^{\mfA_x}_h$, $\mcM^{\mfA_y}_h$, $\mcM^{\mfA_z}_h$
are the mass matrices associated with the respective sets.
Therefore,
it is enough to show
$\mcM^{\mfA_\mu}_h = C h^2 \mcS^{\mfA_\mu}_h$
for each $\mu \in \{x,y,z\}$.

First, we consider the blocks associated with
$\mfA_x = \{(\psi^y_j)_x, (\psi^z_k)_x\}$ for $1\le j \le N_y,$ $1\le k \le N_z$.
The proof for other blocks is similar.
For any two alternating functions $(\psi^\iota_j)_x$ and
$(\psi^\lambda_k)_x$ in $\mfA_x$, we have

\noindent {\bf Case 1.} if $\iota = \lambda$ (let them be equal to $y$, without loss of generality), then
$$
a_h\left((\psi^y_j)_x, (\psi^y_k)_x\right)
=
	\sum_{Q \in \Tau_h(\O)} \int_Q \nabla (\psi^y_j)_x \cdot \nabla (\psi^y_k)_x 
=
	\sum_{Q \in \Tau_h(\O^y_j \cap \O^y_k)} \int_Q \frac4{h^2}
=
	4 N_x N_z h \delta_{jk},
$$
since the number of cubes in $\O^y_j$ is $N_x N_z$.
Here, $\delta_{jk}$ denotes the Kronecker delta.

\noindent {\bf Case 2.}
if $\iota \not = \lambda$ (let $\iota = y$ and $\lambda = z$, without loss of generality), then
\begin{align*}
a_h\left((\psi^y_j)_x, (\psi^z_k)_x\right)
=
	\sum_{Q \in \Tau_h(\O)} \int_Q \nabla (\psi^y_j)_x \cdot \nabla (\psi^z_k)_x 
=
	\sum_{Q \in \Tau_h(\O^y_j \cap \O^z_k)} \int_Q \frac4{h^2}
=
	4 N_x h,
\end{align*}
since the number of cubes in $\O^y_j \cap \O^z_k$ is $N_x$.
On the other hand, it is ready to see that
$$
\left((\psi^y_j)_x, (\psi^y_k)_x\right)_\O
=
	\sum_{Q \in \Tau_h(\O^y_j \cap \O^y_k)} \int_Q (\psi^y_j)_x \, (\psi^y_k)_x 
= \frac{N_x N_z h^3 \delta_{jk}}{3} ,$$  
and
$$
\left((\psi^y_j)_x, (\psi^z_k)_x\right)_\O
=
	\sum_{Q \in \Tau_h(\O^y_j \cap \O^z_k)} \int_Q (\psi^y_j)_x \, (\psi^z_k)_x 
=
	\frac{N_x h^3}{3} .
$$
Therefore
$\mcM^{\mfA_x}_h = \frac{1}{12} h^2 \mcS^{\mfA_x}_h$,
and the proof is completed.
\end{proof}

\subsection{Numerical results}
As mentioned before, our knowledge to construct a basis $\mfB^\flat$
for $V^{\mfB,h}_{\#}$ explicitly in 3D is lacking.
Thus we only use the scheme option 4 for our numerical test.
\begin{example}\label{ex:3}
Consider \eqref{eq:weak-formulation-periodic} on the domain $\O=(0,1)^3$ with
the exact solution $u(x,y,z) = \sin(2\pi x)
\sin(2\pi y) \sin(2\pi z)$.
\end{example}
The numerical results for Example \ref{ex:3} given in
\tabref{tab:elliptic-3d} confirm our theoretical results.

\begin{table}[htb!]
  \footnotesize
  \centering
  \begin{tabular}{ l | c  c | c  c }
    \hline \hline
    & \multicolumn{4}{c}{Opt 4} \\ \cline{2-5} 
    \multicolumn{1}{c|}{$h$} & $|u-\bar{u}^\natural_h|_{1,h}$ & order & $\|u-\bar{u}^\natural_h\|_0$ & order \\ \hline
    1/8 & 1.505E-00 & - & 3.848E-02 & - \\
    1/16 & 7.550E-01 & 0.995 & 9.716E-03 & 1.986 \\
    1/32 & 3.777E-01 & 0.999 & 2.434E-03 & 1.997 \\
    1/64 & 1.889E-01 & 1.000 & 6.089E-04 & 1.999 \\
    1/128 & 9.443E-02 & 1.000 & 1.523E-04 & 2.000 \\
    \hline \hline
  \end{tabular}
  \caption{{Numerical results for Example \ref{ex:3}.}}
  \label{tab:elliptic-3d}
\end{table}

\bigskip
\section*{Acknowledgments}
This research was supported in part by National Research Foundations
(NRF-2017R1A2B3012506 and NRF-2015M3C4A7065662).
\bigskip

\def\cprime{$'$}

\end{document}

\bibliographystyle{abbrv}
\bibliography{ms}

\end{document}
